\newcommand{\cc}{\mathbb{C}}
\newcommand{\re}{\mathbb{R}}
\newcommand{\e}{\varepsilon}
 \newcommand{\cW}{\mathcal{W}}
\newcommand{\fr}{\partial}
\newcommand{\set}[1]{\left\{#1\right\}}
\newcommand{\norm}[1]{\left\Vert#1\right\Vert}
\newcommand{\abs}[1]{\left\vert#1\right\vert}
\newcommand{\cd}{{\cc^2}}
\newcommand{\pu}{{\mathbb{P}^1}}
\newcommand{\rest}[1]{ \arrowvert_{#1}}
\newcommand{\unsur}[1]{\frac{1}{#1}}
\newcommand{\el}{\mathcal{L}}
\newcommand{\lrpar}[1]{\left(#1\right)}
\newcommand{\la}{\lambda}
\newcommand{\loc}{\mathrm{loc}}
\newcommand{\inv}{^{-1}}
\newcommand{\jstar}{J^\varstar}
\DeclareMathOperator{\supp}{Supp}
\DeclareMathOperator{\jac}{Jac}
\DeclareMathOperator{\dist}{dist}
\newtheorem{prop}{Proposition} [section]
\newtheorem{thm}[prop] {Theorem}
\newtheorem{defi}[prop] {Definition}
\newtheorem{lem}[prop] {Lemma}
\newtheorem{cor}[prop]{Corollary}
\newtheorem{question}[prop]{Question}
\newtheorem{conjecture}[prop]{Conjecture}
\theoremstyle{remark}
\newtheorem{rmk}[prop]{Remark}
\begin{document}


\title[Hyperbolicity criteria for automorphisms of $\cd$]{Topological and geometric hyperbolicity criteria for polynomial automorphisms of $\cd$}
\author{Eric Bedford}
\address{Institute for Mathematical Sciences Stony Brook University Stony Brook, NY 11794}
\email{ebedford@math.stonybrook.edu}

\author{Romain Dujardin}
\address{ Sorbonne Université, CNRS, Laboratoire de Probabilités, Statistiques et Modélisations (LPSM), F-75005 Paris, France}
\email{romain.dujardin@sorbonne-universite.fr}

 \begin{abstract}
We prove that uniform hyperbolicity is invariant under topological conjugacy for dissipative polynomial automorphisms 
 of $\mathbb{C}^2$. 
Along the way we also show that a sufficient condition for hyperbolicity is that local 
stable and unstable manifolds of saddle points have uniform geometry. 
\end{abstract}

 \maketitle 

\section{Introduction}

\subsection{} The main motivation  of this note is to study the following problem:

\begin{question}\label{q:basic}
Is uniform hyperbolicity a topological property for complex Hénon maps? 
\end{question}

We use the terminology 
  ``complex Hénon map'' as a synonym  for ``polynomial automorphism of $\cd$ with non-trivial dynamics''. 
By \cite{FM} we can normalize such a map such that it is a product of actual  
Hénon maps $(z,w)\mapsto (aw+p(z) , az)$.  
Hyperbolicity here is understood in the sense of \cite{bs1}, that is, 
we say that a complex Hénon map $f$  is \emph{hyperbolic} if its Julia set 
  $J = J_f$ is a hyperbolic set, which must then be of saddle type. This was  shown in \cite{bs1} to have 
   strong  consequences on the global dynamics of $f$: 
 the chain recurrent set consists of  $J$ together with finitely many periodic attractors, and $f$ satisfies Smale's Axiom A on $\cd$.  
 (See   \cite{ishii survey} for a recent survey on complex Hénon dynamics, with an emphasis on hyperbolic maps.)
  
  Let us recall a bit of     standard notation. Given a complex Hénon map $f$, 
  we denote by $J^+$   the forward  Julia set, which is 
  the locus of non-normality of forward iterates $(f^n)_{n\geq 0}$, or equivalently 
  the boundary of the set $K^+$ of points with bounded forward orbits. We likewise define $J^-$ and $K^-$ for backward dynamics, 
  and we set $J = J^+\cap J^-$. We denote by $\jstar$ the closure of the set of saddle periodic orbits, which is contained in $J$. 
  For hyperbolic maps, we have  $J=\jstar$; however, 
   this equality is an open problem in the general case.

Question \ref{q:basic} was previously considered for   rational maps on the Riemann sphere and for  smooth
 Anosov diffeomorphisms of (real) compact manifolds. Let us  start by briefly reviewing these cases. 
  
\subsection{One-dimensional rational maps} 
 For polynomials and rational maps in one variable, the answer to the question is ``yes'' since there is a simple topological 
criterion for hyperbolicity: $\overline{ \mathrm{PC}(f)}\cap J = \emptyset$, where $\mathrm{PC}(f)$ is the postcritical set. 
As a consequence, 
 if $f_0$ and $f_1$ are rational maps in one variable such that $f_0$ is hyperbolic, and $\phi: \pu\to\pu$ is a
 topological conjugacy between $f_0$ and $f_1$, then $f_1$ is hyperbolic. Actually the  statement already 
 holds locally near the Julia set:
 
 \begin{prop}\label{prop:rational}
 Let  $f_0$ and $f_1$ be rational maps on $\pu$, and assume that $f_0$ is hyperbolic.  If there are neighborhoods
 $N(J_0)$ and $N(J_1)$ of $J_0$ and $J_1$ and a homeomorphism $\phi:N(J_0)\to N(J_1)$ which satisfies 
 $\phi\circ f_0=f_1\circ \phi$ wherever these compositions make sense, then
 $f_1$ is hyperbolic. 
 \end{prop}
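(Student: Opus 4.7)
The plan is to verify for $f_1$ the hypotheses of Mañé's theorem---absence of critical points and of parabolic periodic points in $J_1$---and thereby conclude that $J_1$ is a hyperbolic set. I would proceed in three steps.

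First, I would check that $\phi(J_0) = J_1$. The conjugacy $\phi$ induces a bijection between periodic orbits of $f_0$ in $N(J_0)$ and periodic orbits of $f_1$ in $N(J_1)$ of the same period. Since for any rational map of degree $\geq 2$ the Julia set is the closure of the repelling periodic points, and since topological repulsion at a fixed point $p$ (every orbit in a punctured neighborhood of $p$ escapes in finite time) is preserved by a local conjugacy, the hyperbolicity of $f_0$ ensures that $\phi$ sends the repelling periodic points in $J_0$ to topologically repelling periodic points of $f_1$ in $N(J_1)$, which necessarily lie in $J_1$. Density and continuity give $\phi(J_0) \subset J_1$, and the reverse inclusion follows by symmetry.

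Second, I would transfer the key topological obstructions. Criticality of a holomorphic map at $p$ is equivalent to $f$ failing to be a local homeomorphism at $p$---a condition manifestly preserved by $\phi$---so $\crit(f_0)\cap J_0 = \emptyset$ gives $\crit(f_1)\cap J_1 = \emptyset$. Likewise, the topological type of a periodic holomorphic germ is preserved under local conjugacy: such a germ is topologically repelling iff its multiplier has modulus greater than $1$, while parabolic germs admit attracting petals and so fail the topological-repulsion property. Since every periodic point of $f_0$ in $J_0$ is repelling, the same holds for $f_1$ in $J_1$, so no parabolic periodic point of $f_1$ lies in $J_1$.

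Finally, I would invoke Mañé's theorem: a closed invariant subset $\Lambda \subset J(f)$ of a rational map containing no critical point and no parabolic periodic point is either hyperbolic or meets the $\omega$-limit set of a recurrent critical point contained in $J(f)$. Applied to $\Lambda = J_1$, the second alternative is vacuous since $\crit(f_1)\cap J_1 = \emptyset$, hence $J_1$ is a hyperbolic set and $f_1$ is hyperbolic. The main technical point I anticipate is the topological distinction between repelling and parabolic periodic points, which rests on the classical fact that the holomorphic germs $z \mapsto \lambda z$ with $|\lambda|>1$ and $z \mapsto z + z^{k+1}$ are not topologically conjugate.
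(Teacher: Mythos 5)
Your proof is correct and follows essentially the same route as the paper's: establish $\phi(J_0)=J_1$ via periodic points and the topological invariance of their type, transfer the conditions $\crit(f)\cap J=\emptyset$ and "no parabolic cycles" through the conjugacy, then conclude hyperbolicity. The only cosmetic difference is in the final step: you invoke Mañé's theorem explicitly, whereas the paper appeals to the equivalent standard criterion that a rational map with no parabolic cycles and no critical points on its Julia set is hyperbolic.
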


The result is not obvious    since the conjugacy $\phi$ cannot  detect that  a point in $N(J_1)$ is post-critical.  
Here and throughout the paper we use indices 0 and 1 to label the dynamical objects (Julia set, etc.) respectively associated to $f_0$ and $f_1$. 

\begin{proof}   
Notice first that the conjugacy $\phi$ sends  periodic points to
periodic points. The topological dynamics around a periodic point 
determines its type (attracting, repelling, neutral) so  it follows that all periodic points of $f_1$ are hyperbolic. 
In particular $f_1$ has no parabolic points. 

Since the Julia set is the accumulation set of periodic orbits   we infer that 
$\phi(J_0) = J_1$. 
Now a rational map without parabolic points is hyperbolic 
if and only if its critical set is disjoint from the Julia set. 
This property holds for $f_1$ by the topological conjugacy, so the result follows. 
\end{proof}

\begin{rmk} \label{rmk:rational} \
\begin{enumerate}
\item It is not enough in the proposition  to assume that $\phi$ is a conjugating 
homeomorphism $J_0\to J_1$. Indeed it is well known that 
$z^2+ \frac14$ is topologically conjugate on its Julia set to any quadratic polynomial in the main cardioid 
(e.g. $z^2$). As we will comment below, a similar  phenomenon holds
for complex Hénon maps (see \cite{radu tanase}). 
\item If we suppose a priori that $\deg(f_0) = \deg(f_1) = d$ we can relax the assumption on $\phi$ by assuming  only 
that 
$\phi$ is any injective continuous map defined in a neighborhood $N_0$ 
of $J_0$ and such that $\phi\circ f_0 = f_1\circ \phi$ wherever 
these  compositions makes sense. Indeed  
by the invariance of domain theorem, $\phi(N_0)$ is an open subset of the plane. We only have to shows that it
  contains   $J(f_1)$. Indeed $f_0$ has only finitely many non-repelling periodic points, so 
  $J_0$ contains $p_n\sim d^n$ repelling periodic points of period $d$ for large $n$.  Thus by the topological conjugacy, 
  $\phi(J_0)$  contains  $p_n$ repelling periodic points of $f_1$, which are equidistributed to the equilibrium measure 
  $\mu_{f_1}$ whose support is $J(f_1)$. Therefore   $\phi(J_0) = J_1$ and we are done. 
 \end{enumerate}
\end{rmk}

\subsection{Anosov diffeomorphisms} \label{subs:anosov}
The problem of topological invariance of hyperbolicity  
 in real dynamics has been  popularized in particular by A. Katok. 
 The answer is already quite subtle for the simplest case of Anosov diffeomorphisms of the 2-torus. 
  
Indeed  there exist examples of  $C^2$
diffeomorphisms $f$ of the 2-torus which are not hyperbolic but still    globally 
topologically conjugate to a linear Anosov map. 
This can be done by  either  carefully deforming a linear Anosov map 
 until some saddle fixed point becomes neutral  by preserving the geometry of the stable and unstable foliations 
 (see \cite{katok 79}),
 or by deforming the foliations until 
 reaching a cubic  heteroclinic tangency  
 (see \cite{enrich, bonatti diaz vuillemin}).

If we now impose   the conjugacy to be H\"older then there are different regimes depending on    the 
precise H\"older regularity. First, it can be arranged that 
in the previous examples the conjugacy and its inverse are 
Hölder continuous  \cite{gogolev}, and thus hyperbolicity is not
invariant under H\"older conjugacy. On the other hand if the conjugacy is sufficiently close to 
being bi-Lipschitz --namely, the product of the H\"older exponents of $\phi$ and $\phi\inv$ is larger than 1/2-- then $f$ 
is Anosov \cite{fisher} (see \cite{gogolev}).

%

 \subsection{A conjecture}
 The most natural way to address  Question \ref{q:basic} would be to find 
 a   topological criterion 
 ensuring hyperbolicity for a complex Hénon map, in the spirit of the one-dimensional condition
 $J\cap \overline { \mathrm{PC}(f)} = \emptyset$. 
 Strictly speaking, a Hénon map admits 
 no critical points; nevertheless there are  ways to give a reasonable meaning to this condition 
 -- which are more differential-geometric than topological, though.  For instance, in the dissipative regime,
  the condition that there are no critical points 
 on $J$ naturally corresponds to the existence of a dominated splitting and,  provided dissipation is strong enough,
  a good analogue of the one-dimensional situation  was achieved in  \cite{lyubich peters}. 
  
 A variant is to translate  the condition $J\cap \overline { \mathrm{PC}(f)} = \emptyset$ into a regularity property of the geometry 
 of the forward and backward Julia sets $J^{+/-}$ near $J$. In this respect it was shown in \cite{bs8} that if 
 in some neighborhood of $J$, $J^+$ and $J^-$ 
 are the supports  of two    
 Riemann surface laminations  which are transverse along   $J$, then $f$ is hyperbolic. 
In  \S\ref{sec:geometric} below we reprove and generalize this result in several ways. 

 Back to our initial problem, even if it is unclear how to design  a purely topological criterion for hyperbolicity, one 
 may ask whether hyperbolicity is invariant under topological conjugacy. 
Here is a precise analogue of Proposition \ref{prop:rational} for complex Hénon maps:
 
 \begin{conjecture} \label{conj:topological}
 Let $f_0$ and $f_1$ be two polynomial automorphisms of $\cd$ with non-trivial dynamics, and assume that $f_0$ is hyperbolic. 
 Suppose that there exists  respective  neighborhoods $N_0$ and $N_1$ 
 of $J_0 = \jstar_0$ and $\jstar_1$ and a homeomorphism $\phi: N_0\to N_1$ such that $\phi\circ f_0 = f_1\circ \phi$ wherever 
these  compositions makes sense. Then $f_1$ is hyperbolic. 
 \end{conjecture}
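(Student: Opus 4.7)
The plan is to apply the geometric hyperbolicity criterion of \S\ref{sec:geometric}, which extends \cite{bs8}: if the local stable and unstable manifolds at saddle points of $f_1$ fit together into a pair of transverse Riemann surface laminations in a neighborhood of $\jstar_1$, then $f_1$ is hyperbolic. The goal is thus to transport the hyperbolic lamination structure of $f_0$ through $\phi$ and recover this configuration for $f_1$.

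First, I would verify that $\phi$ sends saddle periodic orbits of $f_0$ onto saddle periodic orbits of $f_1$, and that $\phi(J_0)=\jstar_1$. Periodicity is preserved by any conjugacy, and saddle-type hyperbolicity in dimension two is detected topologically through the presence of non-trivial local stable and unstable sets in any neighborhood of the periodic point, so $\phi$ preserves the saddle property. Density of saddle orbits in $J_0=\jstar_0$, together with continuity of $\phi$, then forces $\phi(J_0) = \jstar_1$, so the Julia set of $f_1$ topologically matches $\jstar_1$ in $N_1$.

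Next, hyperbolicity of $f_0$ gives a continuous Riemann surface lamination $\{W^s_\loc(p)\}_{p\in J_0}$ on a neighborhood of $J_0$, transverse to the corresponding unstable lamination. Applying $\phi$ produces two transverse continuous topological laminations on $N_1$. Their leaves through saddle points of $f_1$ are genuine holomorphic disks, being germs of the usual holomorphic (un)stable manifolds of those saddles. Since saddles are dense in $\jstar_1$, one obtains a dense set of holomorphic leaves of uniformly bounded size within each transported lamination. A normal families / Bishop compactness argument then promotes every leaf to a holomorphic disk: any leaf is a Hausdorff limit of holomorphic disks with uniform area bounds, hence is itself a holomorphic disk. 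Transversality, being topological, is inherited through $\phi$. The resulting pair of transverse holomorphic laminations then allows us to invoke the criterion of \S\ref{sec:geometric} and conclude that $f_1$ is hyperbolic.

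The main obstacle lies in the compactness step: one needs uniform geometric bounds (diameter, area, continuous dependence on the leaf) in order to extract holomorphic limits and rule out degeneration to lower-dimensional objects or to non-holomorphic configurations. These a priori bounds must be transported from the hyperbolic structure of $f_0$, but the mere continuity of $\phi$ does not immediately yield such bounds --- one has to combine the transversality of the two laminations with the local product structure around $\jstar_1$ in order to extract quantitative estimates from the topological conjugacy. The dissipativity assumption enters here in an essential way, since it yields strong quantitative control on unstable manifolds under forward iteration (area contraction) and supplies the monotonicity needed to avoid the geometric pathologies that could occur in the volume-preserving regime.
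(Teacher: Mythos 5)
Before anything else, note that the statement you are trying to prove is labelled as a \emph{conjecture} in the paper, and the paper only establishes it in two special cases: when $f_1$ is dissipative (Theorem~\ref{thm:topological dissipative}) and when $f_1$ is conservative and $\phi$ is H\"older (Theorem~\ref{thm:topological conservative}); the general case is explicitly left open. Your proposal reads as a proof of the full conjecture, with dissipativity appearing only in the final paragraph as a convenient extra; this mismatch should be flagged clearly at the outset.

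There are two concrete gaps even in the dissipative case. First, the assertion that ``saddle-type hyperbolicity in dimension two is detected topologically'' is not a soft fact: a semi-attracting periodic point of $f_1$ is not \emph{a priori} excluded by the local conjugacy, and the paper rules it out via hedgehog theory (the existence of a totally invariant set on which $f^{q_n}\to\mathrm{id}$, from \cite{FLRT,LRT}), while the conservative case needs the H\"older hypothesis. Second, and more seriously, your compactness step is exactly the place where the real work happens and where your sketch stalls. You correctly observe that a normal-family or Bishop-type limit requires uniform size bounds on the holomorphic leaves, and that mere continuity of $\phi$ does not supply them; but the sentence ``combine transversality with local product structure to extract quantitative estimates'' is not an argument. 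The paper never obtains a uniform bound over all of $\jstar_1$ directly. Instead it proves uniform transverse regularity \emph{at saddle points} by the tube argument (Lemma~\ref{lem:tub} and Proposition~\ref{prop:saddle regular}), using Proposition~\ref{prop:uniform}, the local product structure of $f_0$, and the topological invariance of intersection multiplicities to bound the covering degree; it then extends this to Pesin-regular points (Lemma~\ref{lem:tube Pesin}), which requires a negative Lyapunov exponent for every invariant measure --- this is precisely where dissipativity enters, via $\chi_1+\chi_2=\log\abs{\jac f_1}<0$ and $\chi_2\ge 0$, not via any ``area contraction of unstable manifolds''; and finally a measure-theoretic argument shows the open invariant good set $\Omega$ is all of $\jstar_1$, since its closed invariant complement would have to carry a measure. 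Without the tube mechanism and the open-plus-full-measure strategy, your Bishop-compactness route has no way to get started, so the proposal as written does not close the conjecture even in the dissipative case.
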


Here are some comments on this conjecture: 
\begin{enumerate}
\item It was shown in \cite{saddle} (see also \cite{guerini peters})
that for a complex Hénon map hyperbolicity on $\jstar$ implies hyperbolicity. This explains why we can restrict
to a neighborhood of $\jstar_0$ instead of $J_0$, and opens the way to an analysis of hyperbolicity based on periodic points. 
\item If we add the hypothesis that $f_0$ and $f_1$ have the same dynamical degree, then 
by using the equidistribution of periodic orbits from \cite{bls2} and arguing as 
in Remark \ref{rmk:rational}  we can relax the assumption that 
$\phi(N(\jstar_0))$ contains $\jstar_1$. 
\item As observed in Remark \ref{rmk:rational}, the conjecture is false  if the conjugating 
homeomorphism $\phi$ is  only defined on $J_0=\jstar_0$ (see  \cite{radu tanase}). 
\item The conjecture is true if $\phi$ is obtained by deformation in the following sense:
  it was shown in \cite{hyperbolic} that if there is a weakly   stable holomorphic family 
$(f_{\la})$ connecting $f_0$ and $f_1$, then $f_1$ is hyperbolic\footnote{The original statement in \cite{hyperbolic} concerns hyperbolicity on $\jstar$, so we further use \cite{saddle} to deduce
 hyperbolicity on $J$.}.
\end{enumerate}

\subsection{Quasi-hyperbolicity}

The methods in this paper are closely related to the notion of quasi-hyperbolicity. If $p$ is a saddle point and $r>0$, we let 
$W^{s/u}_r(p)$ denote the connected component of $W^{s/u}(p)\cap B(p,r)$ containing $p$. Following \cite{bs8}
 map $f$ is said
\emph{quasi-hyperbolic} if there exists positive constants $r$ and $B$ such that for every saddle periodic point $p$:
\begin{enumerate}[{ (i)}]
\item $W^{s/u}_r(p)$ is closed in $B(p,r)$ and
\item the area of $W^{s/u}_r(p)$ is bounded by $B$. 
\end{enumerate}
If $\phi:N_0\to N_1$ is a topological conjugacy as in Conjecture \ref{conj:topological} then $\phi$ preserves stable and unstable 
manifolds, so if {(i)} holds for $f_0$ it will also hold for $f_1$ (after possibly shrinking $r$). 
It was shown in \cite{bgs} that if $f$ is quasi-hyperbolic then there exist stable and unstable manifolds $\cW^{s/u}(x)$
 through each point   $x\in \jstar$. Furthermore $f$ is uniformly 
hyperbolic (on $\jstar$ and thus $J$) if and only if there is no tangency between $\cW^s$ and $\cW^u$. 
Thus  if we  know that $f$ is already quasi-hyperbolic, then the additional condition of  hyperbolicity is a topological
 invariant in the sense of the conjecture. At this stage, however, 
  it remains an open question whether quasi-hyperbolicity is a topological 
 property. 

\subsection{Results and outline}
In \S\ref{sec:geometric} we establish several  sufficient conditions for hyperbolicity based on the geometry of 
local stable and unstable manifolds of saddle periodic points. 
 A first sufficient condition for hyperbolicity, which essentially follows from \cite{bs8},
  is that these local stable and unstable manifolds have uniform size 
and the angle between them is uniformly bounded 
from below.   We give a self-contained proof of this result  (see Theorem \ref{thm:bs8_revisited}). 
We further show that the transversality 
assumption is superfluous (Theorems \ref{thm:uniform regularity} and \ref{thm:us regularity}), and that, as it might be expected,
  in the dissipative case it is enough to control the geometry of unstable manifolds 
   (Theorem \ref{thm:uuregular global}). 

In \S\ref{sec:topological} we prove   Conjecture \ref{conj:topological} in the case where $f_1$ is 
dissipative (Theorem \ref{thm:topological dissipative}).  In the conservative case the conjecture holds provided $\phi$ is H\"older continuous (Theorem \ref{thm:topological conservative}). The general case remains open\footnote{Notice that the Jacobian 
is not invariant under topological conjugacy: the Hénon map $(z,w)\mapsto (z^2+c + aw, z)$ is conjugate to a horseshoe for any Jacobian $a$,
when $ \abs{c}\gg \abs{a}$.}. 
    
  \section{Geometric criteria for hyperbolicity} \label{sec:geometric}

\subsection{Size of a submanifold at a point and u/s regularity} \label{subs:size}
Endow $\cd$ with the Euclidean metric.   A  {\em bidisk of size $r$} is the  image of $D(0, r)^2$ under  some affine 
isometry. 
A curve $V$ in  $\cd$ is a graph over an affine line $L$ if the orthogonal projection onto $L$ is injective when restricted to $V$.
Then there is  a well-defined notion of slope of a holomorphic curve with respect to $L$.  

\begin{defi}
A curve $V$ through $p$ is said 
to have  {size $r$ at $p$}
if there exists a neighborhood of $p$ in $V$ that is a graph of slope at most 1   
over a disk of radius $r$ in the tangent space $T_pV$. 
\end{defi}

If $\Delta$ be a disk of size $r$ at $p$,   fixing orthonormal coordinates $(x,y)$ so that $p=0$ and $T_pV = \set{y=0}$, we get that 
 the connected component of $\Delta$ through $p$ in the bidisk $ D(0,r)^2$ is a graph $\set{y=\varphi(x)}$
 over the first coordinate with $\abs{\varphi'}\leq 1$ and $\varphi'(0)=0$. 
 In particular if $\Delta$ is  immersed and has size $r$ at $p$, then   it is a submanifold in  $B(0, r/\sqrt{2})$ (because  a bidisk of size $r$ contains a ball of radius $r/\sqrt{2}$). 

 We now recall a few concepts from \cite{hyperbolic}. 
   A point $x\in \jstar$ is said {\em  u-regular} (resp. {\em  s-regular}) 
if there exists $r>0$ and a sequence of 
saddle points $(p_n)$ converging to  $x$ such that $W^u(p_n)$ (resp. $W^s(p_n)$) is of size $r$ at $p_n$. 
In this case it can be shown that the sequence of disks  $W^u_r(p_n)$ (resp. $W_r^s(p_n)$) converges 
in the $C^1$ topology to a (smooth) holomorphic disk of size $r$ at $x$ which we denote by $\cW^u_r(x)$ (resp. $\cW^s_r(x)$) (see 
\cite[Prop. 4.2]{hyperbolic}). This notation is meant to emphasize that 
at this stage $\cW^{s/u}_r(x)$ need not be an stable/unstable manifold in the usual sense. 
We  use the notation $\cW^{u/s}_\loc(x) $ for an unspecified neighborhood of 
$x$ in $\cW^u_r(x)$. 
We say that $x$ is {\em regular} if it is u- and s-regular and  $\cW_\loc^u (x)$ and $\cW_\loc^s (x)$ do not coincide, and 
 {\em transverse regular} if they are transverse. In particular we have the implications  
 $$\text{ u- and s-regular} \Leftarrow  \text{ regular } \Leftarrow\text{ transverse regular.}$$ 

It is easy to see that if $x$ is a saddle point, then $x$ is regular (for instance because it generates homoclinic intersections, hence it belongs to a horseshoe) and $\cW^s_\loc(x)$ and $\cW^u_\loc(x)$ coincide with the classical local stable and unstable manifolds 
of $x$. 

We define a local stable set $$\dot W^s_{\text{loc},\epsilon}(x) = \{y:\text{dist}(f^n(x),f^n(y))<\epsilon \  \forall n\ge0,\text{ and } \lim_{n\to\infty}\text{dist}(f^n(x),f^n(y))=0\}$$

\begin{lem}
Suppose that there is a complex disk $\Delta$ such that $x\in\Delta\subset\dot W^s_{\text{loc},\epsilon}(x)$.  If $x$ is s-regular, then $\Delta$ coincides with $\cW^s(x)$ locally at $x$.  The analogous result holds for `s' replaced by `u'.
\end{lem}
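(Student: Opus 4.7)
The plan is to argue by contradiction, assuming that the germ of $\Delta$ at $x$ differs from that of $\cW^s_r(x)$. Since both are irreducible analytic curves of complex dimension~$1$ inside the ambient $\cd$, their intersection multiplicity at $x$ is then a finite integer $m\geq 1$, and for a small enough bidisk $B$ centered at $x$ the intersection $\Delta\cap\cW^s_r(x)\cap B$ reduces to $\{x\}$.

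The first step is to use $s$-regularity of $x$ to produce, for each large $n$, an intersection point $y_n\in W^s(p_n)\cap\Delta$ close to $x$. By definition the disks $W^s_r(p_n)$ can be parameterized by holomorphic maps $\phi_n:\dd\to\cd$ with $\phi_n(0)=p_n$, and $\phi_n\to\phi$ in $C^1$ where $\phi$ parameterizes $\cW^s_r(x)$. Writing $\Delta\cap B=\{g=0\}$ for a holomorphic defining function $g$, the holomorphic function $g\circ\phi$ of one complex variable has a zero of order $m$ at $0$, and $g\circ\phi_n\to g\circ\phi$ uniformly on compact subsets of $\dd$. By Rouch\'e's theorem, $g\circ\phi_n$ then has $m$ zeros (counted with multiplicity) in a small neighborhood of $0$ for $n$ large, producing intersection points $y_n\in W^s_r(p_n)\cap\Delta$ with $y_n\to x$.

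The second step compares the asymptotic behavior of the forward orbit of $y_n$ in two ways. Since $y_n\in W^s(p_n)$, we have $\dist(f^k(y_n),\text{orbit}(p_n))\to 0$ as $k\to\infty$, so the $\omega$-limit set satisfies $\omega(y_n)\subset\text{orbit}(p_n)$. On the other hand, $y_n\in\Delta\subset\dot W^s_{\text{loc},\e}(x)$ implies $\dist(f^k(y_n),f^k(x))\to 0$, hence $\omega(y_n)=\omega(x)$. Combining these, $\omega(x)\subset\text{orbit}(p_n)$ for every sufficiently large $n$.

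To conclude, I extract a subsequence so that the $p_n$ lie on pairwise distinct periodic orbits; this is possible because each periodic orbit is finite while $p_n\to x$. Since distinct periodic orbits are disjoint, the inclusions for two such indices force $\omega(x)=\emptyset$, contradicting the fact that $x\in J\subset K^+$ has bounded forward orbit and hence nonempty $\omega$-limit. The $u$-regular case follows by applying the same reasoning to $f^{-1}$. The main delicate point is the persistence of intersection in step one, but this is a routine application of the argument principle; the substantive idea of the proof is the dichotomy exploited in step two, which uses in a crucial way that the $y_n$ simultaneously belong to genuine stable manifolds of saddles and to the local stable set of $x$.
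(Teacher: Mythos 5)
Your argument is correct and matches the approach the paper uses for the closely related Lemma~\ref{lem:pesin_regular}: from the $C^1$-convergence $W^s_r(p_n)\to\cW^s_r(x)$, a Rouch\'e/Hurwitz perturbation argument produces intersection points $y_n\in W^s(p_n)\cap\Delta$ near $x$ whenever the germ of $\Delta$ at $x$ differs from that of $\cW^s_r(x)$, and the incompatibility between $\omega(y_n)\subset\mathrm{orbit}(p_n)$ and $\omega(y_n)=\omega(x)\neq\emptyset$ for two indices on distinct periodic orbits yields the contradiction. The one case you leave implicit is when $x$ is itself a saddle and the sequence $(p_n)$ witnessing s-regularity is eventually constant equal to $x$, so that no subsequence with pairwise distinct orbits can be extracted; but then $\dot W^s_{\loc,\epsilon}(x)$ is contained in $W^s_\loc(x)$ and the complex disk $\Delta\ni x$ coincides trivially with $W^s_\loc(x)=\cW^s_\loc(x)$, so the conclusion still holds.
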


\begin{lem}
If $x$ is Pesin regular, and $x$ is u- and s-regular, then $\cW^{s/u}_{\text{loc}}(x)$ agree locally at $x$ with the Pesin manifolds $W^{s/u}_{\text{Pesin}}$.  Further, $\cW^s_{\text{loc}}(x)\ne \cW^u_{\text{loc}}(x)$, so $x$ is regular in the sense defined above, and in fact it is transverse regular.
\end{lem}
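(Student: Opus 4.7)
The plan is to reduce this claim to the preceding lemma by identifying the Pesin stable and unstable manifolds as holomorphic disks lying inside the local dynamical stable/unstable sets $\dot W^{s/u}_{\mathrm{loc},\epsilon}(x)$.

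First, I would invoke standard Pesin theory for the holomorphic diffeomorphism $f$ at the Pesin regular point $x$. This yields a transverse Oseledec splitting $T_x\cc^2 = E^s_x \oplus E^u_x$, together with holomorphic Pesin stable and unstable disks $W^s_{\mathrm{Pesin}}(x)$ and $W^u_{\mathrm{Pesin}}(x)$ tangent respectively to $E^s_x$ and $E^u_x$, along which the forward (resp.\ backward) orbits contract exponentially fast towards the orbit of $x$. Consequently, for any sufficiently small neighborhood $U$ of $x$ inside $W^s_{\mathrm{Pesin}}(x)$ and for $\epsilon$ chosen smaller than the size of the Pesin chart at $x$, the inclusion $U \subset \dot W^s_{\mathrm{loc},\epsilon}(x)$ holds; applying Pesin theory to $f\inv$ gives the symmetric statement on the unstable side.

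Next I would apply the preceding lemma directly: since $x$ is s-regular and $U$ is a complex disk through $x$ contained in $\dot W^s_{\mathrm{loc},\epsilon}(x)$, the lemma forces $U$ to coincide with $\cW^s_\loc(x)$ near $x$. Hence $W^s_{\mathrm{Pesin}}(x)$ agrees with $\cW^s_\loc(x)$ in a neighborhood of $x$, and the same argument on the unstable side yields the corresponding identification $W^u_{\mathrm{Pesin}}(x) = \cW^u_\loc(x)$ locally at $x$.

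Finally, transverse regularity is automatic: by the identifications just obtained, the disks $\cW^s_\loc(x)$ and $\cW^u_\loc(x)$ carry at $x$ the tangent lines $E^s_x$ and $E^u_x$, which are transverse by Pesin regularity. The two disks therefore meet transversely at $x$; in particular they are distinct, so $x$ is regular and indeed transverse regular in the sense of \S\ref{subs:size}. The only mildly delicate point in this plan is the quantitative comparison between the size of the Pesin chart at $x$ and the constant $\epsilon$ from the definition of $\dot W^s_{\mathrm{loc},\epsilon}(x)$, but this is routine in Pesin theory for holomorphic automorphisms and I do not anticipate any genuine obstacle.
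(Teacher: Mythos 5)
Your proof is correct and follows what is clearly the intended route: the first lemma (about a complex disk through $x$ contained in $\dot W^s_{\mathrm{loc},\epsilon}(x)$) is stated immediately beforehand precisely so that it can be applied to a sufficiently small piece of the holomorphic Pesin stable (resp. unstable) disk, yielding $W^{s/u}_{\mathrm{Pesin}}(x)=\cW^{s/u}_\loc(x)$ locally, and the transverse-regularity conclusion then falls out from the fact that the Oseledets splitting $T_x\cc^2=E^s_x\oplus E^u_x$ is a direct sum. The only point worth flagging, which you already mention, is the standard Pesin-theoretic check that a small enough neighborhood of $x$ inside $W^s_{\mathrm{Pesin}}(x)$ really does lie in $\dot W^s_{\mathrm{loc},\epsilon}(x)$ — one shrinks the piece so that the tempered constant in the exponential decay estimate keeps the whole forward orbit within $\epsilon$. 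As a side remark, the adjacent Lemma~\ref{lem:pesin_regular} in the paper is proved by a different mechanism (the coincide-or-disjoint dichotomy for unstable manifolds of saddle points $p_n\to x$ together with Hurwitz's theorem), and that argument could also establish the identification of the Pesin and $\cW$ manifolds in the present statement; but it would still need to be supplemented with the Oseledets transversality observation to obtain the non-coincidence and transversality, so your direct route via the preceding lemma is cleaner here.
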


More generally we have:

\begin{lem}\label{lem:pesin_regular}
If $x$ is a Pesin regular point which is 
 regular, then
$\cW^s_\loc(x)$ and $\cW^u_\loc(x)$ coincide with the classical Pesin 
local stable and unstable manifolds of $x$. 
\end{lem}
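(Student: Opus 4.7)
The plan is to deduce the lemma from the two preceding lemmas in the subsection, together with standard Pesin theory. Since by definition $x$ being regular implies in particular that $x$ is both u-regular and s-regular, the hypothesis of the second lemma in this subsection applies verbatim and the conclusion follows. It is nevertheless instructive to give a direct derivation using only the first lemma of the subsection, which I sketch now.

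First, I would recall the relevant input from Pesin theory. At a Pesin regular point $x\in\jstar$ with positive and negative Lyapunov exponents (which holds automatically for a.e.\ point with respect to the equilibrium measure of a complex Hénon map by \cite{bls2}), one obtains holomorphic Pesin local stable and unstable manifolds $W^s_{\mathrm{Pesin},\loc}(x)$ and $W^u_{\mathrm{Pesin},\loc}(x)$, each an embedded holomorphic disk through $x$. The defining property of the Pesin stable manifold says that every $y\in W^s_{\mathrm{Pesin},\loc}(x)$ satisfies $\dist(f^n(x),f^n(y))\to 0$ as $n\to +\infty$, and that the forward orbit of $y$ stays within some small $\e$ of the orbit of $x$ for all $n\ge 0$, where $\e$ is adapted to the Pesin block containing $x$. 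Consequently $W^s_{\mathrm{Pesin},\loc}(x)\subset \dot W^s_{\loc,\e}(x)$, and symmetrically $W^u_{\mathrm{Pesin},\loc}(x)\subset \dot W^u_{\loc,\e}(x)$.

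With this inclusion in hand, the first lemma of the subsection applies with $\Delta = W^s_{\mathrm{Pesin},\loc}(x)$, using that $x$ is s-regular (a direct consequence of regularity). This yields that $W^s_{\mathrm{Pesin},\loc}(x)$ coincides with $\cW^s_\loc(x)$ locally at $x$. Running the identical argument with $u$ in place of $s$ gives $W^u_{\mathrm{Pesin},\loc}(x) = \cW^u_\loc(x)$ locally, completing the proof. The only real point to check is that the Pesin disks sit inside the local stable/unstable sets $\dot W^{s/u}_{\loc,\e}$, but this is essentially the content of the Pesin stable/unstable manifold theorem and presents no serious obstacle; thus the main work has been isolated in the two preliminary lemmas, and the present statement is a genuinely short consequence.
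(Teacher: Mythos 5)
Your proof is correct but follows a different path from the paper. The paper proves the lemma directly: take a sequence of saddle points $p_n\to x$; each local unstable manifold $W^u_{\loc}(p_n)$ either coincides with or is disjoint from the Pesin manifold $W^u_{\mathrm{Pesin}}(x)$, converges in the $C^1$ topology to $\cW^u_{\loc}(x)$, and since both limit disks pass through $x$ the Hurwitz theorem forces them to coincide locally. You instead reduce to the two preceding (unproved) lemmas. The first part of your argument --- appealing to the second unnumbered lemma --- is logically valid but essentially vacuous: that lemma has weaker hypotheses (u- and s-regular rather than regular) and a stronger conclusion, so Lemma~\ref{lem:pesin_regular} is literally a special case of it, which makes this reduction uninformative. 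The substantive part of your argument is the reduction to the first unnumbered lemma via the inclusion $W^s_{\mathrm{Pesin},\loc}(x)\subset \dot W^s_{\loc,\e}(x)$; this is correct (it is precisely the characterizing property of Pesin local stable manifolds on a Pesin block with $\e$ chosen adapted to the block) and cleanly isolates the dynamical content in the unnumbered lemma, whose proof would itself rest on a Hurwitz-type argument like the one the paper gives here. So the two approaches are morally close, with yours being more modular (deferring the analytic step to the prior lemma) and the paper's being more self-contained. One small remark: you correctly note that the proof never actually needs the condition $\cW^s_\loc(x)\ne\cW^u_\loc(x)$ built into ``regular''; u- and s-regularity already suffice, and indeed the paper's own proof only uses u-regularity for the unstable half.
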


\begin{proof} Denote temporarily the  Pesin 
local unstable manifolds  by $W^{u}_{\mathrm{Pesin}}(x)$.
If $(p_n)$ is a sequence of saddle points converging to $x$, then $\cW^u(p_n) =  W^u_\loc (p_n)$ must coincide with or
be disjoint from $W^u_{\mathrm{Pesin}}(x)$, and converge 
to $\cW^u(x)$ in the $C^1$ topology. Since both $W^u_{\mathrm{Pesin}}(x)$ and $\cW^u(x)$ contain $x$, by the 
Hurwitz theorem we conclude that $W^u_{\mathrm{Pesin}}(x)$ locally 
coincides with $\cW^u(x)$. 
\end{proof}

   We say that 
$x\in \jstar$ {\em  uniformly u-regular}    if the  uniform size
 property for local unstable manifolds holds for {\em any} sequence $(p_n)$ converging to  $x$.   
  If required we can specify the size $r$ in the 
 terminology. 
 Uniform s-regularity is defined similarly.  We say that $x$ is uniformly (resp. transverse) 
  regular if it is uniformly u- and s- regular, and  (resp. transverse) regular. 
  
The following result will play an important role in this paper (of course it admits an identical    s-regular version). 

\begin{prop}\label{prop:lamination}
The following assertions are equivalent:
\begin{enumerate}
\item Every point in $\jstar$ is uniformly $u$-regular.
\item There exists a uniform  $r>0$ such that for every saddle periodic point $p$, $W^u(p)$ has size $r$ at $p$.
\item There exists a uniform  $r>0$ and a dense set $D$ of saddle 
periodic points such that for every   $p\in D$, $W^u(p)$ has size $r$ at $p$.
\item There exists a lamination $\cW^u$  by Riemann surfaces in  a
neighborhood of $\jstar$ which extends the family of 
local unstable manifolds of saddle points.  
\end{enumerate}
\end{prop}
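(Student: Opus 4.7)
The plan is to establish the equivalences $(1) \Leftrightarrow (2) \Leftrightarrow (3)$ and $(2) \Leftrightarrow (4)$. Most of the implications are purely formal. For $(1) \Rightarrow (2)$, apply uniform u-regularity at $x = p$ to the constant sequence $p_n \equiv p$ for each saddle $p$. For $(2) \Rightarrow (3)$, take $D$ to be the set of all saddle periodic points, which is dense in $\jstar$ by definition. For $(2) \Rightarrow (1)$, any approximating sequence of saddles $(p_n)$ for $x \in \jstar$ automatically carries size-$r$ unstable manifolds, which is exactly uniform u-regularity at $x$.

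The main analytic step is $(3) \Rightarrow (2)$. Fix a saddle $p \in \jstar$ and choose $q_n \in D$ with $q_n \to p$; by (3), each $W^u(q_n)$ has size $r$ at $q_n$. Inside a bidisk of fixed size around $p$ these disks form a normal family, so a subsequence of $W^u_r(q_n)$ converges in the $C^1$ topology to a holomorphic disk $\Delta$ of size $r$ at $p$ with $T_p \Delta = \lim E^u(q_n) = E^u(p)$. I would then show that $\Delta$ coincides locally with $W^u(p)$, which forces $W^u(p)$ to have size $r$ at $p$. For this I verify the hypothesis of the `u' analogue of the first unnamed lemma of Subsection~\ref{subs:size}: every $y \in \Delta$ sufficiently close to $p$ belongs to the local unstable set $\dot W^u_{\mathrm{loc},\e}(p)$. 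Picking $y_n \in W^u(q_n)$ with $y_n \to y$, the slope-$1$ graph property on a disk of radius $r$ prevents folding of $W^u(q_n)$ under $f^{-k}$, and combined with the hyperbolic splitting at $q_n$ this yields uniform backward contraction along $W^u(q_n)$ near $q_n$. Passing to the limit gives $\dist(f^{-k}(y), f^{-k}(p)) \to 0$, which together with the tangency at $p$ and one-dimensionality forces $\Delta = W^u(p)$ locally.

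The equivalence with (4) is handled as follows. For $(4) \Rightarrow (2)$, compactness of a fundamental transversal of the lamination $\cW^u$ provides a uniform plaque size $r$, and since local unstable manifolds of saddles are plaques by assumption, $W^u(p)$ has size $r$ at every saddle. For $(2) \Rightarrow (4)$, I set $\cW^u_r(x)$ to be the $C^1$ limit of $W^u(p_n)$ for any saddle sequence $p_n \to x$; uniform u-regularity together with the Hurwitz argument from the proof of Lemma~\ref{lem:pesin_regular} shows this limit is independent of the sequence. The family $\{\cW^u_r(x)\}_{x \in \jstar}$ is then a lamination: two disks through nearby points either locally coincide or are disjoint, since a non-trivial transverse crossing would produce two distinct limit disks through the crossing point, contradicting uniqueness.

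The principal obstacle is the uniform backward-contraction estimate on saddle unstable manifolds of size $r$, which underlies both the ``limit disk lies in $\dot W^u_{\mathrm{loc}}$'' step of $(3) \Rightarrow (2)$ and the uniqueness of limit disks in $(2) \Rightarrow (4)$. Once this quantitative control is isolated, the remaining implications and the lamination construction proceed in parallel.
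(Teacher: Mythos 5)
Your broad structure (proving $(1) \Leftrightarrow (2) \Leftrightarrow (3)$ and $(2) \Leftrightarrow (4)$) is logically adequate and your treatment of $(2)\Rightarrow(1)$, $(2)\Rightarrow(3)$, $(4)\Rightarrow(2)$, and $(2)\Rightarrow(4)$ is close in spirit to the paper, but there are two gaps, one minor and one serious.

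The minor gap is in $(1)\Rightarrow(2)$: applying uniform u-regularity of $p$ to the constant sequence gives that $W^u(p)$ has size $r(p)$ at $p$, but $r(p)$ a priori depends on $p$. Condition $(2)$ requires a single $r$ working for all saddles; this needs a compactness argument on $\jstar$ (cover $\jstar$ by finitely many balls, on each of which the size from uniform u-regularity is controlled, then take the minimum of the finitely many radii). The paper explicitly invokes compactness here.

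The serious gap is in your $(3)\Rightarrow(2)$, and you correctly identify it yourself: the ``uniform backward-contraction estimate on saddle unstable manifolds of size $r$'' is simply not available at this point. The saddles $q_n\in D$ approaching $p$ may have unstable multipliers tending to $1$, and nothing in hypothesis $(3)$ controls their backward contraction rates; indeed, producing exactly such a uniform rate is the content of Proposition~\ref{prop:uuniform} later in the paper, and it requires the \emph{additional} hypothesis that $G^+$ is nonconstant on the local unstable disks. So the step ``the slope-$1$ graph property plus the hyperbolic splitting at $q_n$ yields uniform backward contraction'' is false as stated and cannot be patched without importing precisely what the proposition is meant to make possible. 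The correct way to identify the limit disk $\Delta$ with $W^u(p)$ is the Hurwitz argument you already use for $(2)\Rightarrow(4)$: $W^u(q_n)$ and $W^u(p)$ are disjoint (distinct saddle orbits have disjoint unstable manifolds), so the $C^1$ limit $\Delta$ of $W^u_r(q_n)$ either coincides with $W^u(p)$ near $p$ or misses it; since both contain $p$, they coincide. No dynamics is needed. More broadly, the paper's route is $(3)\Rightarrow(4)$ by a purely geometric holomorphic-motions argument (persistence of proper intersections, the Schwarz lemma to control slopes, then the $\lambda$-lemma of Mañé--Sad--Sullivan to produce the lamination), and the whole point of that route is that it bypasses any expansion or contraction estimate. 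Replacing your backward-contraction step with Hurwitz, or switching to the paper's $(3)\Rightarrow(4)$ route, would close the gap.
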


\begin{proof}
(1) implies (2) by a simple compactness argument. The implications (2)$\Rightarrow$(3) and (4)$\Rightarrow$(1) are obvious, and   
(3)$\Rightarrow$(4) follows from   standard holomorphic motions techniques. Let us give some details on this last point for the 
reader's convenience (see also Prop. 4.2 and Lemma 5.3 in  \cite{hyperbolic}). We start with the following basic geometric idea:  
 if $\Delta$ and $\Delta'$ 
flat disks in $B(0, r/10)\subset \cd$ which  both intersect $B(0, r/1000)$, and whose tangent vectors are  $1/4$-far apart  (relative to the usual Fubini-Study metric on $\pu$), then $\Delta$ and $\Delta'$ intersect and 
$$\dist(\Delta\cap \fr B(0, r/10), \Delta'\cap \fr B(0, r/10) )> \frac1{100}.$$ By the persistence of proper intersections, the same holds for $\widetilde \Delta$ and $\widetilde \Delta'$, whenever $\widetilde \Delta$ and $\widetilde \Delta'$ are holomorphic disks which are respectively 
 $1/100$ close to $\Delta$ and $\Delta'$. Now if $\Delta$ is a disk of size $r$ at $x$, by the Schwarz Lemma, $\Delta\cap B(0, r/10)$ 
 remains $1/100$ close to $T_x\Delta$. Taking the contraposite we see that 
  if $\Delta$ and $\Delta'$ are disks of size $r$ respectively at $x$ and $x'$, with 
 $\dist(x,x')<1/1000$, then 
 their tangent spaces must be $1/4$-close to each other, in particular they are graphs over a disk of radius $r/4$
relative to the same orthogonal projection.  

  Now by (3), for every $x\in \jstar$ there is a  holomorphic disk $\cW^u_r(x)$ 
  of size $r$ through $x$   and 
  these disks are either disjoint or locally coincide  because  local unstable manifolds of saddle points are disjoint. 
  By the previous discussion, the disks 
  $\cW^u_{r/4}(y)$ are disjoint graphs over some direction for $y$ close to $x$, 
so they form a lamination by 
the Lambda Lemma of \cite{mss}. Thus we get the desired lamination structure 
in the $r/5$-neighborhood of $\jstar$. 
\end{proof}

 \begin{rmk}\label{rmk:local_lamination}
 Under the assumptions of Proposition \ref{prop:lamination}, there exists a neighborhood $\mathcal N$ of $\jstar$ and a lamination $\cW^u$ of $
 \mathcal N$ by Riemann surfaces which extends the family of local unstable manifolds of saddle points. Beware however that it does 
 not \emph{a priori} imply that $J^-\cap \mathcal N$ is laminated nor that it coincides with $\supp(\cW^u)$: indeed, $J^-$ is the closure of \emph{global} unstable manifolds, which could  recur to $\mathcal N$ in a complicated fashion 
 (this point  is a main issue in \cite{saddle}).
 \end{rmk}

\subsection{Existence of invariant laminations and hyperbolicity}
  
 Recall that a complex Hénon map 
 $f$ is said to be 
  {\em hyperbolic} if $J$ is a hyperbolic set. As was noted above, by \cite{saddle} (see also \cite{guerini peters})
 it is actually enough to check hyperbolicity on $\jstar$: this opens the way to hyperbolicity criteria based on 
 periodic points. 

\begin{thm}[{\cite{saddle}}] \label{thm:saddle}
If $\jstar$ is a hyperbolic set for $f$, then $f$ is hyperbolic. 
\end{thm}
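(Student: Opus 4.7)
The plan is to reduce the theorem to the equality $J=\jstar$, because if this holds then hyperbolicity of $\jstar$ immediately upgrades to hyperbolicity of $J$, hence of $f$. So the task is to derive $J\subseteq\jstar$ from the hyperbolicity of $\jstar$.

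First, I would use hyperbolicity of $\jstar$ to obtain uniform local stable and unstable manifolds: every saddle $p\in\jstar$ carries stable and unstable disks of a uniform size $r>0$, with a uniform lower bound on the angle between them. Proposition \ref{prop:lamination} (and its symmetric $s$-regular twin) then produces laminations $\cW^s$ and $\cW^u$ by holomorphic disks in a neighborhood $\mathcal N$ of $\jstar$ extending the families of local stable and unstable manifolds of saddle periodic points, with $\cW^s$ and $\cW^u$ uniformly transverse on $\mathcal N$.

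The main step is to show $J\cap\mathcal N\subseteq \jstar$. The strategy is to identify $J^-\cap\mathcal N$ with $\supp(\cW^u)\cap\mathcal N$, and symmetrically $J^+\cap\mathcal N$ with $\supp(\cW^s)\cap\mathcal N$; the intersection is then the set of transverse intersections of leaves of $\cW^u$ and $\cW^s$, which is exactly the closure of homoclinic intersections of saddles in $\jstar$, so contained in $\jstar$. The inclusion $\supp(\cW^u)\subseteq J^-$ is immediate. For the reverse inclusion one must confront the phenomenon flagged in Remark \ref{rmk:local_lamination}: a priori, $J^-$ could accumulate in $\mathcal N$ along plaques coming from global unstable manifolds that are not leaves of $\cW^u$. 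I would rule this out by slicing the Green current $T^-$ by plaques of $\cW^s$: these slices are well-defined positive measures by uniform transversality, and the equidistribution of unstable manifolds of saddles on the maximal-entropy measure $\mu=T^+\wedge T^-$ (from \cite{bls2}) forces them to be supported on transverse intersections with $\cW^u$, i.e. on $\jstar$.

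To globalize, I would exploit that $J$ is compact and $f$-invariant and that every $f$-invariant probability measure supported in $J$ is a weak-$\ast$ limit of averages along saddle periodic orbits, so its support meets $\jstar$. Consequently every orbit in $J$ accumulates on $\jstar$ and enters $\mathcal N$ after finitely many iterates; combining with the local statement and $f$-invariance of $\jstar$ yields $J\subseteq \jstar$. The principal obstacle is the identification $J^-\cap\mathcal N = \supp(\cW^u)\cap\mathcal N$: eliminating the possibility of recurrent plaques in $J^-$ near $\jstar$ genuinely requires the interplay between the Green currents and the transverse laminations, and is likely the main technical content of \cite{saddle}.
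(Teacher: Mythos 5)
Your overall plan---reduce to $J=\jstar$, build the transverse laminations $\cW^s,\cW^u$ near $\jstar$ from hyperbolicity, and then try to show locally that $J^\pm$ agree with $\supp\cW^{s/u}$---is the right skeleton, and you correctly identify the obstruction (Remark~\ref{rmk:local_lamination}). But the two load-bearing steps both have genuine gaps, and you essentially acknowledge as much in your last sentence.

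\emph{The slicing argument does not close the loop.} Slicing $T^-$ by a plaque $D$ of $\cW^s$ produces a positive measure supported on $J^-\cap D$, but the equidistribution statements from \cite{bls2} are weak-$\ast$ convergence statements about currents/measures, and weak-$\ast$ convergence gives no control on the \emph{support} of the limit beyond an inclusion in the wrong direction. The scenario you need to exclude---a global unstable manifold of a saddle re-entering the tube around $\jstar$ as a plaque not belonging to $\cW^u$---would produce extra mass in the slice, and nothing in the equidistribution of periodic orbits or of unstable currents forbids this. Worse, interpreting the slice of $T^-$ as a transverse measure of the lamination $\cW^u$ (so that its support lies in $\jstar$) already presupposes that $T^-$ near $\jstar$ is carried by $\cW^u$, which is exactly what you set out to prove; the argument is circular. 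This is the actual content of \cite{saddle}, and it requires a finer analysis of the laminar structure of $T^-$ (or a closing-lemma-type input as in \cite{closing}) rather than bare equidistribution.

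\emph{The globalization also fails as stated.} The assertion that every $f$-invariant probability measure on $J$ is a weak-$\ast$ limit of averages over saddle periodic orbits is not available here: such approximation (via specification or Katok's horseshoe theorem) is a \emph{consequence} of hyperbolicity of the measure, which is precisely what is unknown for measures carried on $J\setminus\jstar$. What \cite{bls2} gives is that the uniform measures on period-$n$ saddles converge to $\mu$---a statement about one specific sequence, not about arbitrary invariant measures. Consequently ``every orbit in $J$ accumulates on $\jstar$'' does not follow, and without it the passage from a local inclusion $J\cap\mathcal N\subseteq\jstar$ to the global $J\subseteq\jstar$ is unsupported. You would instead need to argue, say, that the maximal invariant set of a suitable neighborhood of $\jstar$ equals $\jstar$ (using local maximality from the local product structure) \emph{and} that every orbit in $J$ is eventually trapped there, and this last point is again the heart of the difficulty.
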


A geometric criterion for hyperbolicity based on the existence and  transversality of unstable laminations 
was established  in  \cite[Thm 8.3]{bs8}.  By  incorporating 
  the result of Theorem \ref{thm:saddle} it reads as follows.
  
 \begin{thm}[{\cite{bs8}}]   \label{thm:bs8}
 Let $f$ be a complex Hénon map. Assume that there exists a neighborhood of $\jstar$ and   Riemann surface laminations $\el^\pm$ of $J^\pm$ such that $\el^+$ and $\el^-$ intersect transversally at all points of $\jstar$. Then $f$ is hyperbolic. 
 \end{thm}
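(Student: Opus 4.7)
The plan is to reduce via Theorem \ref{thm:saddle} to showing that $\jstar$ is uniformly hyperbolic, to convert the lamination hypothesis into the uniform regularity framework of \S\ref{subs:size} using Proposition \ref{prop:lamination}, to extract a continuous $Df$-invariant transverse splitting on $\jstar$, and finally to upgrade it to hyperbolicity by exploiting the complex analytic structure of the leaves.

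First I would identify the leaves of $\el^\pm$ at saddle periodic points with their classical invariant manifolds. Fix a saddle $p\in \jstar$ and choose a flow-box chart for $\el^-$ at $p$: since $\supp(\el^-)=J^-$, locally the support is a union of plaques, and the connected holomorphic disk $W^u_\loc(p)\subset J^-$ through $p$ is trapped in the unique plaque through $p$, hence by dimension coincides with a neighborhood of $p$ in that plaque. The symmetric argument with $\el^+$ handles $W^s_\loc(p)$. The uniform size of plaques on a compact neighbourhood of $\jstar$ then yields a constant $r>0$ such that $W^{u/s}(p)$ has size $r$ at every saddle $p\in \jstar$. Proposition \ref{prop:lamination} (and its s-regular counterpart) now gives uniform $u$- and $s$-regularity at every $x\in \jstar$, and by a Hurwitz-style argument inside a common flow-box chart the limit disks $\cW^{u/s}_\loc(x)$ coincide with the plaques of $\el^\mp$ through $x$. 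The transversality of $\el^+$ and $\el^-$ on $\jstar$ therefore upgrades to transverse regularity at every $x\in \jstar$.

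Setting $E^{u/s}(x)=T_x\cW^{u/s}_\loc(x)$ thus yields a continuous splitting $T_x\cd = E^u(x)\oplus E^s(x)$ on $\jstar$. This splitting is $Df$-invariant because $f$ is holomorphic, preserves $J^\pm$, and consequently maps each local plaque of $\el^\mp$ into another such plaque (by uniqueness of plaques through a given point within a chart).

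The main obstacle is the final step: promoting this continuous invariant splitting to uniform hyperbolicity. This is where the complex analytic nature of the leaves is essential, and the usual real-dynamical routes from dominated to hyperbolic splittings would not suffice. I would equip the tangent bundle of $\el^-$ with the leafwise Poincaré metric of the plaques, which is comparable to the ambient Euclidean one because plaques have uniform size, and apply Schwarz--Pick to $f$ viewed as a holomorphic map between plaques to conclude that $f$ is non-contracting along $E^u$ in this metric, with a symmetric statement for $f\inv$ along $E^s$. To rule out arbitrarily weak expansion, argue by contradiction: if $\|Df^n|_{E^u(x_n)}\|$ fails to grow exponentially along some sequence, Montel extracts a non-trivial holomorphic disk inside $J^-$ through a point of $\jstar$ on which all forward iterates of $f$ are uniformly bounded, contradicting the density in $\jstar$ of saddle periodic points whose plaques are genuinely expanded by their multipliers. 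The symmetric argument with $f\inv$ yields contraction along $E^s$, so $\jstar$ is uniformly hyperbolic and Theorem \ref{thm:saddle} concludes.
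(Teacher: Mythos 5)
Your first three steps are fine and broadly parallel to the paper's proof: identifying the leaves of $\el^\pm$ through saddle points with the classical local invariant manifolds, invoking Proposition \ref{prop:lamination} to get uniform $u$- and $s$-regularity, and reducing to $\jstar$-hyperbolicity via Theorem \ref{thm:saddle}. (The paper's identification step is argued dynamically via a normal-family/inclination-lemma dichotomy rather than by the "holomorphic disk inside a lamination's support lies in a single leaf" argument, but either works.)

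The gap is in the final step, which is precisely the crux of the theorem. Your Schwarz--Pick claim that $f$ is non-contracting in the leafwise plaque-Poincar\'e metric is not justified as stated. Since $f$ maps leaves to leaves but not plaques to plaques, to get expansion one needs the inclusion $f^{-1}\bigl(P(f(x))\bigr)\cap L(x)\Subset P(x)$, i.e.\ that plaques are strictly shrunk under $f^{-1}$ in the unstable direction; without such an inclusion Schwarz--Pick actually goes the wrong way, giving only that $f$ is non-expanding into a larger target plaque. This inclusion is exactly the expansion property you are trying to prove, so the argument is circular. The plaque-Poincar\'e metric is moreover an artifact of the flow-box decomposition (the leaves themselves are parabolic), so the choice of plaque size matters and there is no canonical non-contraction statement. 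Your Montel fallback is also too vague to carry the day: $J^-$ is unbounded, so bounded derivative of $f^{k_n}$ at $x_n$ along $E^u$ does not by itself produce a normal family or a Fatou disk, and you do not explain how the extracted limit disk would be placed so as to contradict the multipliers at nearby saddles. The paper resolves this by using the Green function $G^+$: Proposition \ref{prop:regularG} shows (via the inclination lemma together with transversality or \cite[Lemma 6.4]{bls} in the tangential case) that $G^+\not\equiv 0$ on every $\cW^u_\loc(x)$, and Proposition \ref{prop:uuniform} then builds an adapted metric $\abs{\cdot}_\eta$ from the sublevel sets $\set{G^+<\eta}$ measured in the canonical affine parameterization of $W^u(p)\cong\cc$; the functional equation $G^+\circ f = d\,G^+$ produces the exact expansion factor, with uniformity coming from Koebe distortion. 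This is the genuinely non-trivial analytic input that your outline is missing.
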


It is convenient to formulate this result in the language of uniform regularity. The following is 
 an essentially equivalent statement (see however Remark \ref{rmk:local_lamination}). 
 
 \begin{thm}  \label{thm:bs8_revisited}
 Let $f$ be a complex Hénon map. If every point in $\jstar$ is uniformly regular and transverse then $f$ is hyperbolic.
 \end{thm}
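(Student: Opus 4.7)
The plan is to combine the existence of laminations provided by Proposition \ref{prop:lamination} with a Schwarz-lemma argument on plates, and then invoke Theorem \ref{thm:saddle}. Since by hypothesis every point of $\jstar$ is uniformly $u$- and $s$-regular, Proposition \ref{prop:lamination} (applied also on the stable side) produces laminations $\cW^u$ and $\cW^s$ by Riemann surfaces on a neighborhood $\mathcal N$ of $\jstar$, whose plates all have uniform size $r>0$ and extend the classical local unstable/stable manifolds of saddle periodic points. At each $x\in\jstar$, transversality gives a continuous splitting $T_x\cd = E^s(x)\oplus E^u(x)$ with $E^{u/s}(x)=T_x\cW^{u/s}_r(x)$; since $\jstar$ is compact the angle between $E^s$ and $E^u$ is uniformly bounded below. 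As the plates are $C^1$-limits of $f$-invariant saddle (un)stable manifolds, the lamination is $f$-invariant on $\jstar$ and the splitting is $Df$-invariant.

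The next step is to upgrade this continuous invariant splitting to a uniformly hyperbolic one via the Schwarz lemma on plates. Working on the unstable side (the stable case is symmetric, with $f$ in place of $f^{-1}$), fix $x\in\jstar$, set $y=f(x)$, and denote by $L$ a Lipschitz constant for $f^{-1}$ on a fixed bounded neighborhood of $\jstar$. Taking $r':=r/(10L)$, I claim that $f^{-1}(\cW^u_{r'}(y))\Subset\cW^u_r(x)$ with a margin to the boundary that is independent of $x$. Uniformizing $\cW^u_r(x)$ and $\cW^u_{r'}(y)$ by the unit disk via Riemann maps $\psi_x,\psi_y$ sending $0$ to the marked points (whose derivatives at $0$ are comparable to $r$ and $r'$ by the Koebe theorem, using the uniform graphing property of plates of a given size), the pullback $F:=\psi_x^{-1}\circ f^{-1}\circ\psi_y:\dd\to\dd$ fixes $0$ and has image compactly contained in $\dd$ with a gap independent of $x$. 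The Schwarz lemma then gives $|F'(0)|\le c<1$ uniformly, which translates into $\|Df^{-1}|_{E^u(y)}\|\le C<1$ uniformly on $\jstar$; iteration yields uniform expansion of $f$ on $E^u$. The symmetric argument on stable plates yields uniform contraction of $f$ on $E^s$, so $\jstar$ is a uniformly hyperbolic set, and Theorem \ref{thm:saddle} then implies that $f$ is hyperbolic.

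The main technical obstacle I foresee is establishing the uniform compact containment $f^{-1}(\cW^u_{r'}(y))\Subset\cW^u_r(x)$ with a gap that does not depend on $x\in\jstar$. This combines several ingredients: the fact that a plate of size $r$ is a graph of slope at most $1$ over a disk of radius $r$ in its tangent space (so that Euclidean smallness inside the plate is controlled by the parameterization), Lipschitz continuity of $f^{-1}$ to push $\cW^u_{r'}(y)$ into a ball of radius $r/5$ around $x$, the $f$-invariance of the lamination on $\jstar$ (so that the image lands in the leaf through $x$), and uniqueness of the local plate through $x$ in the lamination (ruling out that the image exits $\cW^u_r(x)$ along the plate direction by hitting a different branch). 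Once this uniform containment is secured, the Schwarz-lemma contraction is essentially automatic and dominated splitting combined with uniform bounds on both directions gives hyperbolicity in the classical sense.
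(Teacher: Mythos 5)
The plan to build laminations via Proposition~\ref{prop:lamination}, extract an invariant splitting from transversality, and then reduce to Theorem~\ref{thm:saddle} is exactly the paper's framework, but the step where you extract uniform expansion from a Schwarz-lemma argument on plates is flawed, and the flaw is serious.

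Your strategy is to take $r'=r/(10L)$ so that $f^{-1}(\cW^u_{r'}(y))\Subset\cW^u_r(x)$ becomes essentially automatic from the Lipschitz constant of $f^{-1}$, uniformize both plates by the unit disk, and apply the Schwarz lemma to $F=\psi_x^{-1}\circ f^{-1}\circ\psi_y$. The problem is the translation from $|F'(0)|\le c<1$ back to $\|Df^{-1}|_{E^u(y)}\|<1$. Since $\psi_x$ and $\psi_y$ parameterize plates of Euclidean sizes $r$ and $r'$, Koebe gives $|\psi_x'(0)|\asymp r$ and $|\psi_y'(0)|\asymp r'$, so $|F'(0)|\asymp\frac{r'}{r}\|Df^{-1}|_{E^u(y)}\|$. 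The Schwarz bound $|F'(0)|\le c$ therefore only yields $\|Df^{-1}|_{E^u(y)}\|\lesssim \frac{r}{r'}\,c=10Lc$, which is weaker than the trivial Lipschitz bound $\le L$ unless $c<\frac{1}{10L}$. But the compact containment you establish by Lipschitz continuity only gives $c<1$, not $c\ll\frac{r'}{r}$. In short, the containment $f^{-1}(\cW^u_{r'}(y))\Subset\cW^u_r(x)$ for $r'\ll r$ carries no dynamical content: it holds for any Lipschitz map and cannot be the source of expansion. A version with $r'=r$ would give the desired conclusion, but then the containment with a uniform gap is exactly uniform expansion, which is what you are trying to prove.

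This gap reflects a missing ingredient. Your argument never uses the hypothesis that $\cW^u_\loc(x)\neq\cW^s_\loc(x)$, yet without it the conclusion is false: a Fatou disk in $J^+\cap J^-$ would be a leaf along which $f$ is not expanding. The paper's proof supplies the missing dynamical input via the Green function $G^+$. Proposition~\ref{prop:regularG} uses regularity (distinctness of the stable/unstable plates) and the inclination lemma to show $G^+\not\equiv 0$ on $\cW^u_\loc(x)$, and Proposition~\ref{prop:uuniform} builds a Kobayashi-type metric $|e|_\eta=1/\rho_\eta$ measuring how far one can go along the affine parameterization of $W^u(p)$ before $G^+$ exceeds $\eta$. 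The invariance $G^+\circ f=d\,G^+$ and uniform distortion bounds then force a factor-$\lambda>1$ of expansion in this metric. The Green function provides the absolute scale against which the Schwarz/Koebe estimates can detect expansion; a purely geometric comparison of plates, as in your proposal, cannot produce it.
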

 
Let us give a self-contained proof of this theorem, which
   basically follows  the approach of \cite[Thm 8.3]{bs8}.  
First, recall from Proposition \ref{prop:lamination} 
 that if every point in $\jstar$ is uniformly u-regular, then exists $r>0$ and 
  a lamination $\cW^u$ in the $r$-neighborhood of $\jstar$,
 extending the unstable manifolds of saddle points. 
 Recall also   the  dynamical Green function $G^+$, defined by 
 $G^+(x)= \lim_{n\to\infty} d^{-n} \log^+ \norm{f^n(x)}$. It is a non-negative   continuous psh function in $\cd$, with 
 the property that $\set{G^+ = 0}   = K^+$.

 \begin{prop} \label{prop:uuniform}
 Let $f$ be a complex Hénon map. Assume that  every point in $\jstar$ is  uniformly u-regular and 
 that   for every $x\in \jstar$,   
  $ G^+\rest{\cW^u_\loc(x)}\not\equiv 0$.   
Then  $f$ is uniformly expanding  in the direction of   $T\cW^u$ along $\jstar$.
\end{prop}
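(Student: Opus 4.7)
The plan is to exploit the transformation rule $G^+\circ f = d\,G^+$ along the unstable lamination provided by uniform $u$-regularity and convert the non-vanishing hypothesis into uniform exponential expansion of $Df$ along $T\cW^u$. First, by Proposition \ref{prop:lamination}, uniform $u$-regularity yields a lamination $\cW^u$ by holomorphic disks of uniform size $r$ in a neighborhood of $\jstar$, invariant under $f$ in the sense that plaques are mapped into leaves. I would fix holomorphic parametrizations $\phi_x:\mathbb{D}\to\cW^u_r(x)$ with $\phi_x(0)=x$ depending continuously on $x\in\jstar$ in the $C^1$ topology, with $|\phi_x'(0)|$ uniformly bounded above and away from $0$ (possible because the leaves have uniform size in the graph sense). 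Setting $u_x:=G^+\circ\phi_x$, each $u_x$ is a non-negative continuous subharmonic function on $\mathbb{D}$ with $u_x(0)=0$ (since $x\in K^+$) and, by assumption, $u_x\not\equiv 0$. Continuity of $x\mapsto u_x$ and compactness of $\jstar$ produce uniform constants $0<c_0\leq M_0$ with $u_x\leq M_0$ on $\mathbb{D}$ and $\sup_{D(0,1/2)}u_x\geq c_0$ for every $x\in\jstar$.

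Next I express $f^n$ in leaf coordinates. Let $V_n(x)\subset\mathbb{D}$ be the connected component of $0$ in $\phi_x^{-1}\bigl(f^{-n}(\cW^u_r(f^n(x)))\bigr)$, and let $\sigma_n^x:V_n(x)\to\mathbb{D}$ be defined by $\phi_{f^n(x)}\circ\sigma_n^x=f^n\circ\phi_x$. This is a conformal isomorphism fixing $0$: it is injective because $f$ is, and proper because $\partial V_n(x)\cap\mathbb{D}$ maps to $\partial\mathbb{D}$, so in particular $V_n(x)$ is simply connected. The transformation rule becomes $u_{f^n(x)}\circ\sigma_n^x=d^n u_x$ on $V_n(x)$, whence $u_x\leq d^{-n}M_0$ there. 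The chain rule yields $\|Df^n|_{T_x\cW^u}\|\asymp |(\sigma_n^x)'(0)|$ (the implicit constants come from the uniform bounds on $|\phi_x'(0)|$ and $|\phi_{f^n(x)}'(0)|$), and Schwarz applied to $(\sigma_n^x)^{-1}:\mathbb{D}\to V_n(x)\subset\mathbb{D}$ gives $|(\sigma_n^x)'(0)|\geq 1/\operatorname{diam} V_n(x)$. The problem thus reduces to a uniform geometric bound $\operatorname{diam} V_n(x)\leq C\theta^n$ for some $\theta<1$ independent of $x$.

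The decisive step is a uniform two-constants / Harnack inequality for the compact family $\{u_x\}_{x\in\jstar}$: from $u_x(0)=0$, $u_x\leq M_0$, and $\sup_{D(0,1/2)}u_x\geq c_0$ one aims to deduce that the component of $0$ in $\{u_x\leq\epsilon\}$ has diameter $\lesssim\epsilon^\alpha$ for uniform $\alpha>0$, which applied to $\epsilon=d^{-n}M_0$ delivers the required decay. The main obstacle is precisely this uniform H\"older estimate: a point $x\in\jstar$ at which $u_x$ vanishes on a full neighborhood of $0$ (i.e.\ $x$ is interior to $\cW^u_\loc(x)\cap K^+$ inside the leaf) would defeat any polynomial bound. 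I would rule out such degenerate points by contradiction and a normal-families argument: if the uniform estimate were to fail, a diagonal extraction from the conformal isomorphisms $(\sigma_{n_k}^{x_k})^{-1}$ would yield, by Hurwitz's theorem, an injective holomorphic $\psi_\infty:\mathbb{D}\to\mathbb{D}$ with $\psi_\infty(0)=0$ and $u_{x_\infty}\circ\psi_\infty\equiv 0$ at some $x_\infty\in\jstar$, producing an open disk in $\cW^u_\loc(x_\infty)$ on which $G^+\equiv 0$. Invariance of the lamination together with $G^+\circ f=d\,G^+$ would then propagate and enlarge this zero-region along the forward orbit, and a further compactness argument on $\jstar$ would eventually contradict the hypothesis $G^+|_{\cW^u_\loc(y)}\not\equiv 0$ at some $y\in\jstar$.
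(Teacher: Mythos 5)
Your strategy is in the same spirit as the paper's proof---exploit the transformation rule $G^+\circ f=dG^+$ together with conformal geometry along the unstable lamination---but the route differs, and there is a genuine gap at the step you yourself flag as ``the main obstacle.''

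The paper works with the \emph{global} affine structure: it parametrizes $W^u(p)\simeq\cc$ by its uniformization $\psi^u_p$, defines a Kobayashi-style metric $\abs{e}_\eta=1/\rho_\eta$ with $\rho_\eta$ the radius of the largest disk on which $G^+\circ\psi^u_p\leq\eta$, and gets the \emph{one-step} expansion $\abs{df_p(e)}_{\eta_2}\geq\lambda\abs{e}_{\eta_2}$ from the exact invariance $\abs{df_p(e)}_{d\eta}=\abs{e}_\eta$ plus Koebe distortion and the uniform two-sided bound $\eta_1\leq\sup G^+\rest{\cW^u_r(x)}\leq\eta_1'$ coming from continuity of $G^+$ and compactness of $\jstar$. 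Iteration is then immediate. You instead work with local plaques $\phi_x:\mathbb{D}\to\cW^u_r(x)$, introduce the transition maps $\sigma_n^x$, and reduce to an $n$-step diameter decay $\operatorname{diam}V_n(x)\lesssim\theta^n$, which you propose to obtain from a ``uniform two-constants / Harnack'' estimate of the form: the component of $0$ in $\set{u_x\leq\epsilon}$ has diameter $\lesssim\epsilon^\alpha$.

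That Hölder estimate is the gap, and it does not follow from the three constraints you list ($u_x(0)=0$, $u_x\leq M_0$, $\sup_{D(0,1/2)}u_x\geq c_0$): a non-negative subharmonic function can be arbitrarily flat near a zero while satisfying all three, so no uniform exponent $\alpha$ can be extracted from them alone. The degenerate case you aim to exclude by normal families (vanishing of $u_x$ on a full neighborhood of $0$) is in fact already excluded \emph{pointwise} by the hypothesis $G^+\rest{\cW^u_\loc(x)}\not\equiv 0$; what your argument is actually missing is a quantitative \emph{rate}, and the Hurwitz/propagation sketch does not supply one (and, as written, Hurwitz does not preclude a constant limit).

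Your plan can nevertheless be repaired without the Hölder bound, by settling for less. Continuity of $x\mapsto u_x$ on the compact set $\jstar$, together with the pointwise hypothesis, already yields, for every $\delta>0$, a uniform $\epsilon(\delta)>0$ with $\inf_{x\in\jstar}\sup_{D(0,\delta)}u_x\geq\epsilon(\delta)$. Since $V_n(x)\subset\set{u_x\leq d^{-n}M_0}$, this forces $\dist(0,\partial V_n(x))\to 0$ \emph{uniformly in $x$}, with no rate. By the Koebe $1/4$-theorem (not Schwarz: Schwarz gives only $\abs{((\sigma_n^x)^{-1})'(0)}\leq 1$; the useful lower bound is $\abs{(\sigma_n^x)'(0)}\gtrsim 1/\dist(0,\partial V_n(x))$, which involves the inradius rather than the diameter) this gives $\norm{Df^n_x\rest{T\cW^u}}\to\infty$ uniformly. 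Since $T\cW^u\rest{\jstar}$ is a continuous $Df$-invariant line field over a compact set, fix $N$ with $\norm{Df^N\rest{T\cW^u}}\geq 2$ everywhere and use the cocycle relation to recover uniform exponential expansion. The paper's one-step adapted metric sidesteps this last reduction and is cleaner, but the modified version of your argument does reach the same conclusion.
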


This condition on $G^+$ will be used several times in the sequel; it means that $G^+$ does not vanish identicaly on any neighborhood of $x$ in $\cW^s(x)$.


\begin{proof}
Let $r$ be such that for any saddle point $p$, $W^u(p)$ has size $5r$ at $p$. Then by Proposition  \ref{prop:lamination}, 
$\cW^u$  defines  a lamination   in the $r$-neighborhood of $\jstar$ 
     such that for every saddle point $p$, $\cW^{u}(p)$ 
 coincides with the local unstable manifold of $p$. 

We have to show that $f$ is   uniformly expanding along   $\cW^u\rest{\jstar}$, that is,  
 there exists $C>0$ and $\lambda>1$ such that for every $x\in \jstar$, every $k\geq 1$ 
  and $e\in T_x\cW^u(x)$, 
$  \abs{Df^k_x(e)}\geq  C \lambda^k \abs{e}$
(where $\abs{\cdot}$ denotes  the   Riemannian metric induced by the standard Hermitian structure of $\cd$).
 By continuity it is enough to prove this property on  the (dense) set $\mathcal S$ of saddle periodic points. 
For this, we will construct a 
 metric $\abs{\cdot}'$ on $T\cW^u\rest{\mathcal S}$ 
which is   equivalent to  the ambient one (with uniform constants) 
and such that    for every $p\in \mathcal S$, 
  and $e\in T_p\cW^u(p)$, $\abs{Df_p(e)}'\geq   \lambda  \abs{e}'$.

For every saddle point $p$, the global unstable manifold is biholomorphic to $\cc$, so its uniformisation 
$\psi^u_p:\cc\to W^u(p)$ is unique up to a multiplicative factor at the source. In particular 
 $f$ is affine is these parameterizations, and
there is a well-defined notion 
of a round disk in $W^u(p)$, which is $f$-invariant.   
For $e\in T_pW^u(p)$ and $\eta>0$ we define $\abs{e}_\eta$ in the style of the Kobayashi metric:
$$\abs{e}_\eta = \unsur{\rho_\eta} \text{ where } 
\rho_\eta = \sup\set {\rho , \    \sup_{D(0, \rho)} G^+\circ \psi^u_p  \leq  \eta \text{ where } \psi^u_p:\cc\tilde{\to} W^u(p) \text{ and } 
(\psi^u_p)' (0) = e }$$

For every $x\in \jstar$ (not necessarily a saddle),  
$G^+\rest{\cW^u_\loc(x)}$ is not identically 0  near $x$ so we infer that for every $r>0$,  $\sup G^+\rest{\cW^u_r(x)}>0$. 
From the continuity of the Green function, the compactness of $\jstar$,  and the lamination structure we infer the  
existence of     constants   $r>0$ and  
$\eta_1' > \eta_1>0$ such that for every  $x\in \jstar$, $$ \eta_1 \leq  \sup G^+\rest{\cW^u_r(x)}\leq \eta_1'.$$

%

Recall that for near any $x\in \jstar$, up to a unitary change of coordinates, $\cW^u$
 is a union of graphs over a disk of size $r$ and slope bounded by 1 (relative to some projection $\pi$). Thus if $p\in \mathcal S$ is close to $x$, and $\psi_p^u$ is as above, 
it follows that   $\pi\circ \psi^u_p\rest{D(0, \rho_{\eta_1})}$ is  a univalent holomorphic function. Set $\eta_2 = \eta_1/2$.
The Koebe distortion theorem together with the uniform continuity of the Green function  imply that for 
$\rho<\rho_{\eta_2}$,  $\pi\circ \psi^u_p(D(0, \rho))$  is approximately a round disk (with uniform distortion bounds).  
From this uniformity, we infer that there exists
$\la>1$ such that for every $p\in \mathcal S$ and $e\in T_p\cW^u(p)$, 
 $\abs{e}_{\eta_2/d} \geq \lambda  \abs{e}_{\eta_2}$. Set $\eta_3 = \eta_2/d$.
 The invariance relation of the Green function $G^+$ implies that $\abs{df_p(e)}_{d\eta_3} = \abs{e}_{\eta_3} $. From this 
   we get that  for every $p\in \mathcal S$ and $e\in T_p\cW^u(p)$
 $\abs{df_p(e)}_{\eta_2} \geq \lambda \abs{e}_{\eta_2}$.  Finally, again from the uniform continuity of the Green function and 
 bounded distortion, we get that $\abs{\cdot}_{\eta_2}$ is (uniformly) equivalent to $\abs{\cdot}$ on $T\cW^u\rest{\mathcal S}$ so 
the proof is complete. 
 \end{proof}
  
  \begin{rmk}\label{rmk:continuous}
  By a standard procedure, up to reducing $\lambda$
   it is possible to construct a {\em continuous} metric $\abs{\cdot}''$   on $T\cW^u\rest{\jstar}$ such that  
 $\abs{Df_x(\cdot)}''\geq   \lambda  \abs{\cdot}''$. Indeed for $\e>0$ and $e\in T_x \cW^u(x)$, put 
 $$\abs{e}'' = \sum_{n=0}^\infty (\lambda - \e)^n \abs{Df^{-n}_x (e)}. $$ Then one easily checks that $\abs{\cdot}''$ is well-defined, 
 continuous, and satisfies 
 $$\abs{Df_x\inv (e)}''\leq 
 (\lambda-\e)\inv \abs{e}''.$$ 
  \end{rmk}

 The next result implies that if $f$ is uniformly regular, then the second assumption of Proposition \ref{prop:uuniform} holds.
  
  \begin{prop}[see {\cite[Prop. 4.7]{hyperbolic}}]\label{prop:regularG}
 Let $f$ be a complex Hénon map. If  $x\in \jstar$ is    regular 
then   $  G^+\rest{\cW^u_\loc(x)}\not\equiv 0$.
  \end{prop}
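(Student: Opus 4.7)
The plan is to argue by contradiction, assuming $G^+|_{\cW^u_\loc(x)}\equiv 0$, so that $\cW^u_\loc(x)\subset K^+$. Let $(p_n)$ be the sequence of saddles realizing u-regularity at $x$, so that $W^u_r(p_n)\to \cW^u_r(x)$ in the $C^1$ topology. By continuity of $G^+$, the contradiction hypothesis forces $M_n := \sup_{W^u_r(p_n)} G^+ \to 0$.

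The first step is to show that $M_n>0$ for every $n$. Using the uniformization $\psi^u_{p_n}:\cc\to W^u(p_n)$ normalized by $\psi^u_{p_n}(0)=p_n$ and $f^{k_n}\circ \psi^u_{p_n}(\zeta)=\psi^u_{p_n}(\lambda_n\zeta)$ (with $k_n$ the period and $|\lambda_n|>1$ the unstable multiplier), the local piece $W^u_r(p_n)$ corresponds to an open set $D_n\ni 0$ in $\cc$. Forward invariance of $K^+$ together with the exhaustion $\bigcup_{\ell\ge 0} \lambda_n^\ell D_n=\cc$ would propagate $W^u_r(p_n)\subset K^+$ to $W^u(p_n)\subset K^+$, contradicting the standard fact that $G^+\circ \psi^u_{p_n}$ is a non-constant subharmonic function of logarithmic growth on $\cc$ (see e.g. \cite{bs1}).

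The hard part is to upgrade $M_n>0$ to a uniform positive lower bound, and this is where the hypothesis of s-regularity with $\cW^s_\loc(x)\ne \cW^u_\loc(x)$ is essential. By the symmetric argument, local stable manifolds of saddles lie in $K^+$, so passing to the $C^1$ limit along any saddle sequence $q_m\to x$ realizing s-regularity yields $\cW^s_\loc(x)\subset K^+$ as well. Under the contradiction hypothesis one would therefore have two \emph{distinct} holomorphic disks through $x\in \jstar$, both contained in $K^+$. The plan is to convert this into a quantitative contradiction via a normal-families extraction from the rescaled uniformizations $\psi^u_{p_n}$ (normalized so that $|(\psi^u_{p_n})'(0)|=1$): on a disk in $\cc$ whose radius is controlled by the uniform size-$r$ condition, one extracts a holomorphic limit $\psi$ parametrizing $\cW^u_\loc(x)$ on which $G^+\circ\psi\equiv 0$. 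One would then transport this vanishing to a family of nearby unstable-type disks anchored along the transverse stable piece $\cW^s_\loc(x)\subset K^+$, so that their union sweeps out a non-pluripolar region contained in $K^+$, contradicting $x\in \jstar\subset \partial K^+$.

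The main obstacle is precisely this transport step. Without uniform u-regularity on a whole neighborhood of $x$ (which would immediately provide the unstable lamination of Proposition~\ref{prop:lamination} and reduce the problem to a standard holonomy argument), one must construct a weak holonomy by hand, combining the stable contraction along $W^s_r(q_m)$ (which controls how tangent planes of unstable-type disks vary at their intersection with $\cW^s_\loc(x)$) with the uniform $C^1$-convergence of the $W^u_r(p_n)$. This is the technical heart of the argument and is carried out in \cite[Prop. 4.7]{hyperbolic}.
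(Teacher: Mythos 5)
Your proposal takes a genuinely different --- and, as you yourself acknowledge, incomplete --- route, and the ``transport step'' you flag is a real gap. Without uniform $u$-regularity in a full neighborhood of $x$, there is no unstable lamination near $x$, hence no holonomy along $\cW^s_\loc(x)$ with which to sweep out an open subset of $K^+$; the argument as written does not close, and deferring to \cite[Prop.~4.7]{hyperbolic} does not supply the missing step. (Two smaller remarks: the preliminary step showing $M_n>0$, while correct, is unnecessary for the statement; and $\cW^s_\loc(x)\subset K^+$ holds unconditionally because local stable manifolds always lie in $K^+$, so it is not a consequence of the contradiction hypothesis.)

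The paper's proof is much more direct and bypasses the transport issue entirely by exploiting $s$-regularity together with the inclination lemma. Pick a sequence of distinct saddles $p_n\to x$ realizing $s$-regularity, so that the pairwise disjoint local stable disks $W^s_r(p_n)$ converge in $C^1$ to $\cW^s_r(x)$. Since $\cW^s_r(x)\ne\cW^u_r(x)$ by the definition of regularity, for large $n$ the disk $W^s_r(p_n)$ must intersect $\cW^u_\loc(x)$ transversally near $x$: this is immediate if $\cW^s_\loc(x)$ and $\cW^u_\loc(x)$ are transverse, and follows from \cite[Lemma 6.4]{bls} in the tangential case. Once $\cW^u_\loc(x)$ crosses the stable manifold of a saddle transversally, the inclination lemma shows that $\bigl(f^n\rest{\cW^u_\loc(x)}\bigr)_{n\ge0}$ is not a normal family; therefore $G^+\rest{\cW^u_\loc(x)}$ is not harmonic, and in particular not identically zero. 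This is the key idea missing from your proposal: a transverse crossing with a saddle's stable manifold combined with the inclination lemma yields non-normality of the forward iterates at once, with no need for any sweeping or pluripotential argument.
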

  
 \begin{proof}
 If $(p_n)$ is a sequence of distinct saddle points converging to $x$, then $W^s_r(p_n)$ is a sequence of 
 disjoint  submanifolds converging to  
  $\cW^s_r(x)$. Since by assumption $\cW_r^u(x)$ and $\cW^s_r(x)$ are distinct, then for large $n$
$W^s_r(p_n)$ must possess  transverse intersection points with $\cW^u_r(x)$ close to $x$: 
if  $\cW_r^u(x)$ and $\cW^s_r(x)$ 
are transverse this is clear, and if they are tangent this follows from 
 \cite[Lemma 6.4]{bls}).  Then      the inclination lemma implies that $(f^n\rest{\cW^u_\loc(x)} )$
  is not a normal family of holomorphic mappings, therefore $G^+$ is not harmonic on  $\cW^u_\loc(x)$, thus not 
  identically zero, and we are done. 
 \end{proof}
 
 \begin{proof}[Proof of Theorems \ref{thm:bs8} and \ref{thm:bs8_revisited}]
 Under the assumptions of Theorem \ref{thm:bs8_revisited}, it follows directly from Propositions
  \ref{prop:uuniform} and \ref{prop:regularG} that $f$ is uniformly expanding   along $T\cW^u\rest{\jstar}$ and 
  contracting along  $T\cW^s\rest{\jstar}$, that is,  $\jstar$ is a hyperbolic set. Then we conclude from
   Theorem \ref{thm:saddle} that $f$ is hyperbolic. 
   
 To establish Theorem \ref{thm:bs8}, it is enough to check that the existence of the transverse 
  laminations $\el^+$ and $\el^-$ imply uniform 
 transverse regularity. 
We first  observe   that for any saddle point $p$, $W^s(p)$ locally coincides with the leaf $\el^+(p)$ of $\el^+$ through $p$, 
and likewise 
 in the unstable direction. Indeed 
 since the  leaves of $\el^+$ are contained in $J^+$,   for every disk $\Delta$ contained in such a leaf, $(f^n\rest\Delta)_{n\geq 0}$ is a normal family. Now if $\el^+_\loc(p)\neq W^s_\loc(p)$ then either they are transverse and it follows from the inclination lemma that 
 $(f^n\rest{\el^+_\loc(p)})_{n\geq 0}$ is not normal. Otherwise by \cite[Lemma 6.4]{bls} for any $x\in \jstar $ close to $p$, 
$ \el^+_\loc(x)$ is transverse to $W^s_\loc(p)$ and similarly  $(f^n\rest{\el^+_\loc(x)})_{n\geq 0}$ is not a normal family. 
In both cases we reach a contradiction. 
It then follows from Proposition \ref{prop:lamination} that 
 every point in $\jstar$ is uniformly regular and transverse and we conclude as before. 
\end{proof}
 
It turns out that the transversality assumption in Theorem \ref{thm:bs8} is unnecessary, that is, uniform regularity 
 rules out the possibility of tangencies. 

  \begin{thm}\label{thm:uniform regularity}
  Let $f$ be a complex Hénon map. If every point in $\jstar$ is  uniformly  regular then 
 $f$ is uniformly hyperbolic.
  \end{thm}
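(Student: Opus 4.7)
My plan is to reduce Theorem \ref{thm:uniform regularity} to Theorem \ref{thm:bs8_revisited} by showing that no tangency between $\cW^u$ and $\cW^s$ can occur on $\jstar$, so that uniform regularity automatically upgrades to uniform \emph{transverse} regularity.

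First, from uniform u- and s-regularity Proposition \ref{prop:lamination} and its stable counterpart produce laminations $\cW^u$ and $\cW^s$ in a neighborhood of $\jstar$ extending the local unstable and stable manifolds of saddle points. The condition built into ``uniformly regular'' gives $\cW^u_\loc(x) \neq \cW^s_\loc(x)$ at every $x \in \jstar$, so Proposition \ref{prop:regularG} yields $G^+|_{\cW^u_\loc(x)} \not\equiv 0$ and symmetrically $G^-|_{\cW^s_\loc(x)} \not\equiv 0$. Applying Proposition \ref{prop:uuniform} in both directions then provides uniform constants $C>0$ and $\lambda>1$ such that for every $x \in \jstar$, $n \geq 0$, $v^u \in T_x\cW^u(x)$ and $v^s \in T_x\cW^s(x)$,
\[
|Df^n_x(v^u)| \geq C\lambda^n |v^u|, \qquad |Df^n_x(v^s)| \leq C^{-1}\lambda^{-n}|v^s|.
\]

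The next step is to show that $T\cW^u$ and $T\cW^s$ are $Df$-invariant line fields along $\jstar$. Writing $\cW^u_\loc(x)$ as the $C^1$ limit of saddle unstable manifolds $W^u_r(p_n)$ with $p_n \to x$, the biholomorphism $f$ sends this sequence to $W^u(f(p_n))$ with $f(p_n) \to f(x)$ still saddle points. Since leaves of $\cW^u$ are unique at each point, the $C^1$ limit $f(\cW^u_\loc(x))$ must agree near $f(x)$ with the leaf $\cW^u(f(x))$, and in particular $Df_x(T_x\cW^u(x)) = T_{f(x)}\cW^u(f(x))$. The analogous statement for $\cW^s$ follows by applying $f\inv$.

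To conclude, suppose for contradiction that there exists $x \in \jstar$ with $T_x\cW^u(x) = T_x\cW^s(x)$. By the invariance just established, $T_{f^n(x)}\cW^u(f^n(x)) = T_{f^n(x)}\cW^s(f^n(x))$ for every $n \in \zz$, and any nonzero vector $v$ in this common line would have to satisfy $|Df^n_x(v)| \geq C\lambda^n |v|$ and $|Df^n_x(v)| \leq C^{-1}\lambda^{-n} |v|$ simultaneously as $n \to \infty$, which is impossible. Hence $\cW^u$ and $\cW^s$ are transverse throughout $\jstar$, so every point of $\jstar$ is uniformly transverse regular and Theorem \ref{thm:bs8_revisited} delivers uniform hyperbolicity. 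The step requiring the most care is the invariance of the laminations: one must check that pushing the leaf through $x \in \jstar$ forward by $f$ lands inside the corresponding leaf through $f(x)$, which is not a formal consequence of the definitions but rests on the $C^1$-continuity of the convergence $W^u(p_n) \to \cW^u_\loc(x)$ together with the uniqueness of leaves through a given point of a lamination.
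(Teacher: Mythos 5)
Your proof is correct, and it takes a genuinely more direct route than the paper's. The paper's argument also starts from Propositions \ref{prop:lamination}, \ref{prop:uuniform} and \ref{prop:regularG} to obtain the invariant laminations $\cW^{u/s}$ together with uniform expansion along $T\cW^u$ and uniform contraction along $T\cW^s$. But where you observe that a tangency point $x$ immediately yields a vector $v\in T_x\cW^u(x)=T_x\cW^s(x)$ that must simultaneously expand and contract exponentially under $Df^n_x$ --- a one-line contradiction --- the paper instead argues ergodically: the tangency locus $\mathcal T$ is closed and invariant, hence supports an ergodic measure $\nu$, which is hyperbolic; Pesin theory then produces local stable and unstable manifolds tangent to the Oseledets directions $E^\mp$, which by Lemma \ref{lem:pesin_regular} coincide with $\cW^{s/u}$, forcing $E^+=E^-$ $\nu$-a.e.\ and contradicting Oseledets. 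Your argument avoids Oseledets, Pesin theory, and the passage to an invariant measure entirely; it works precisely because in the setting of this theorem the expansion/contraction estimates from Proposition \ref{prop:uuniform} hold at \emph{every} point of $\jstar$, not merely almost everywhere. (The paper's heavier machinery is really required in the next result, Theorem \ref{thm:us regularity}, where $\cW^u_\loc(x)$ and $\cW^s_\loc(x)$ may coincide on a bad set and Proposition \ref{prop:uuniform} no longer applies; the paper likely chose the Pesin route here for uniformity of exposition.) One remark on your own write-up: you correctly flag the $Df$-invariance of the line fields $T\cW^{u/s}$ as the step requiring care, and it is indeed needed --- not to iterate the tangency condition, as you phrase it, but already to make sense of the contraction bound $|Df^n_x(v)|\leq C^{-1}\lambda^{-n}|v|$ for $v\in T_x\cW^s(x)$, since this bound is obtained by applying the backward expansion along $T\cW^s$ at $f^n(x)$ to the vector $Df^n_x(v)$, which must therefore lie in $T_{f^n(x)}\cW^s(f^n(x))$.
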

     
\begin{proof}
By Proposition  \ref{prop:lamination} there exist laminations $\cW^u$ and $\cW^s$ in a neighborhood 
of $\jstar$ extending the family of  local stable and unstable manifolds of periodic points, and 
by Proposition \ref{prop:uuniform} and \ref{prop:regularG} we get 
 that $f$ is uniformly expanding along $\cW^u$ and  $f\inv$ is uniformly
expanding along $\cW^s$. 
To prove the  theorem we thus have to show that these laminations  are transverse at all points of $\jstar$. 
 Let $\mathcal{T}$ be the tangency locus, that 
is the set of points $x\in \jstar$ such that $\cW^s(x)$ and $\cW^u(x)$ are tangent at $x$. This is a closed invariant set. Assume
by way of contradiction  that it  is non empty. Then it supports an ergodic invariant measure $\nu$. Let 
$$\chi^+ = \lim_{n\to +\infty} \unsur{n} \int \log\norm{df^n_x} d\nu(x) \text{ and } 
\chi^- = \lim_{n\to +\infty} \unsur{n} \int \log\norm{df^{-n}_x}\inv d\nu(x) $$ be the Lyapunov exponents of $\nu$. Since $f$ is uniformly expanding/contracting  along  $\cW^{u/s}$ we infer that $\chi^-<0<\chi^+$. 
By Oseledets' theorem, there exists an associated  invariant 
measurable  decomposition $T_x\cd   = E^-(x)\oplus E^+(x)$ defined $\nu$-a.e. such that the growth rate 
of vectors in $E^{\pm}(x)$ 
is governed by $\chi^\pm$. 
By Pesin's theory (see e.g. \cite{fhy}) for $\nu$-a.e. $x$ there are local  
stable and unstable manifolds $W^s_{\mathrm{Pesin}}(x)$ and $W^u_{\mathrm{Pesin}}(x)$
respectively 
tangent to the characteristic directions associated to the negative and positive exponent.
But by Lemma \ref{lem:pesin_regular},
 $W^{s/u}_{\mathrm{Pesin}}(x)$ locally coincides with $\cW^{s/u}(x)$,
so we infer  that $E^+(x) = E^-(x)$ a.e. which contradicts the Oseledets theorem. This contradiction finishes the proof.
\end{proof}

If $f$ is not volume preserving we  can further relax the previous criterion. 
  
  \begin{thm}\label{thm:us regularity}
  Let $f$ be a complex Hénon map with $\abs{\jac(f)}\neq 1$. If every point in $\jstar$ is uniformly 
   u- and s-regular then   $f$ is uniformly hyperbolic.
  \end{thm}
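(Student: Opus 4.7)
Without loss of generality assume $|\jac(f)|<1$, the other case being obtained by applying the result to $f\inv$ since uniform u- and s-regularity are exchanged. By Proposition~\ref{prop:lamination} the uniform u- and s-regularity hypotheses yield laminations $\cW^u$ and $\cW^s$ by Riemann surfaces in a neighborhood of $\jstar$, extending respectively the local unstable and stable manifolds of saddle periodic points. The strategy is to show that the coincidence set
$$E\;:=\;\{x\in \jstar \, :\, \cW^u_\loc(x)=\cW^s_\loc(x)\}$$
is empty, so that every point of $\jstar$ is regular in the sense of \S\ref{subs:size}; Theorem~\ref{thm:uniform regularity} then gives uniform hyperbolicity.

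The set $E$ is closed and $f$-invariant. Assume for contradiction that $E\neq\emptyset$ and pick an ergodic $f$-invariant probability measure $\nu$ supported on $E$. For any $x\in E$ the common disk $D_x:=\cW^u_\loc(x)=\cW^s_\loc(x)$ lies in $K^+\cap K^-$, because $\cW^s_\loc(x)$ is a $C^1$-limit of stable manifolds of saddles (hence in $K^+$) and similarly $\cW^u_\loc(x)\subset K^-$. Consequently the sequences $(f^n\rest{D_x})_{n\ge 0}$ and $(f^{-n}\rest{D_x})_{n\ge 0}$ are normal families of maps into a compact set, so by the Cauchy estimates their derivatives along $T_xD_x$ are bounded uniformly in $n\in\zz$. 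Hence the Lyapunov exponent of $\nu$ along $T_xD_x$ vanishes, and since the sum of the two Lyapunov exponents of $\nu$ equals $\log|\jac(f)|<0$, the second exponent $\chi^-$ satisfies $\chi^-=\log|\jac(f)|<0$ and corresponds to an Oseledets direction $E^-(x)$ transverse to $T_xD_x$ at $\nu$-a.e.\ $x$.

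Because $\chi^-<0$ is bounded away from zero, the Pesin (or Hadamard--Perron) stable manifold theorem applied to the one-dimensional contracting direction $E^-$ produces, at $\nu$-a.e. $x$, a holomorphic disk $W^s_{\mathrm{Pesin}}(x)$ through $x$, tangent to $E^-(x)$, whose points converge exponentially fast to $x$ under forward iteration. In particular $W^s_{\mathrm{Pesin}}(x)\subset \dot W^s_{\loc,\epsilon}(x)$ for $\epsilon$ small enough, and since $x$ is s-regular, the first Lemma of \S\ref{subs:size} forces $W^s_{\mathrm{Pesin}}(x)$ to coincide locally with $\cW^s(x)=D_x$. Comparing tangent spaces gives $E^-(x)=T_xD_x$, contradicting the fact that $E^-(x)$ is transverse to $T_xD_x$.

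The main obstacle in the argument is the possible presence of a zero Lyapunov exponent for $\nu$, which prevents a direct appeal to the Pesin regular locus and to Lemma~\ref{lem:pesin_regular}; this is exactly where the normality of $f^{\pm n}\rest{D_x}$, coming from $D_x\subset K^+\cap K^-$, and the dissipation hypothesis $|\jac(f)|\neq 1$ are used in a crucial way, in order to force the \emph{other} Oseledets direction to be non-degenerate and thereby to reach a contradiction via the uniqueness of the stable manifold.
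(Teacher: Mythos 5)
Your proposal is correct and follows essentially the same route as the paper's own proof: define the coincidence set, show it is closed and invariant, put an ergodic measure $\nu$ on it, use dissipativity and the Fatou-disk property of $\cW^u_\loc(x)=\cW^s_\loc(x)$ to force the negative Oseledets direction $E^-$ to be transverse to $T_xD_x$, and then derive a contradiction by identifying the Pesin stable manifold with $\cW^s_\loc(x)$. The only superficial differences are that you prove directly (via forward \emph{and} backward normality on $D_x$) that the exponent along $T_xD_x$ vanishes, where the paper isolates this as Lemma~\ref{lem:Oseledets} using backward normality alone, and that the paper defines the coincidence set with the fixed-size disks $\cW^{u/s}_r$ to make closedness transparent.
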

     
\begin{proof}
The difference with Theorem \ref{thm:uniform regularity} is that  $\mathcal {T}$ can now contain local leaves so 
  Proposition \ref{prop:uuniform} does not apply. Let $r$ is the uniform size of local s/u manifolds along $\jstar$.
Without loss of generality assume that $\abs{\jac(f)}< 1$. Note
that if $\cW_\loc^u(x) = \cW_\loc^s(x)$ then $\cW^u_r(x) = \cW^s_r(x)$.  
Denote by $\mathcal{T}'$ the set of points $x\in \jstar$ such that $\cW^u_r(x) = \cW^s_r(x)$.  Then $\mathcal  T'$ is also closed and invariant. Indeed if  $\cW^u_r(x) = \cW^s_r(x)$ then clearly $\cW_\loc^u(f(x)) = \cW_\loc^s(f(x))$, hence 
$\cW_r^u(f(x)) = \cW_r^s(f(x))$. Thus 
$f(\mathcal  T')\subset \mathcal  T'$, and the closedness of $\mathcal{T}'$ follows directly 
from the continuity of $x\mapsto \cW^{u/s}_r(x)$. 
 
Assume by way of
contradiction that   $\mathcal{T}'$ is non-empty.  Then it supports an ergodic  invariant measure $\nu$. 
Since $f$ is dissipative its Lyapunov 
exponents satisfy $\chi^-<0\leq\chi^+$.  For every $x\in \mathcal T'$, $\cW_r^u(x) = \cW_r^s(x)$ 
is contained in $J^+\cap J^-$ so it is 
a Fatou disk under forward and backward iteration. The following lemma relates these disks to the Oseledets decomposition.

\begin{lem}\label{lem:Oseledets}
Let $f$ be a complex Hénon map and  $\nu$ be an ergodic  invariant measure whose Lyapunov 
exponents satisfy $\chi^-<0\leq \chi^+$, 
 and  $T_x\cd  = E^-(x)\oplus E^+(x)$ be  the associated 
  measurable   decomposition. 
  If $\nu$-a.e. point is u-regular then for $\nu$-a.e. $x$, $\cW_\loc^u(x)$ is tangent to $E^+(x)$ at $x$. 
\end{lem}

Assuming this result for the moment, let us conclude the proof. 
The contradiction hypothesis implies that  for $\nu$-a.e. $x$,  $\cW_\loc^u(x) = \cW^s_\loc(x)$. 
By Pesin's theory a $\nu$-generic point $x$  admits a local strong stable manifold $W^s_{\rm Pesin}(x)$, 
which is tangent to $E^-(x)$, and by Lemma \ref{lem:pesin_regular} it
coincides with $\cW^s_\loc(x)$. 
 On the other hand, by Lemma \ref{lem:Oseledets},  $\cW^u_\loc(x)$ is a.s. transverse  to $E^-(x)$.
  This contradiction shows that 
  $\mathcal{T}'$ is empty. Therefore every point in $\jstar$ is regular and applying 
  Theorem \ref{thm:uniform regularity} finishes the proof. 
\end{proof}

\begin{proof}[Proof of Lemma \ref{lem:Oseledets}]
Since $\cW^u_\loc(x)$ is contained in $J^-$, $\lrpar{f^{-n}\rest{\cW^u_\loc(x)}}$ is a normal family, so it follows from the Cauchy estimates 
that $\norm{df^{-n}_x (e^u(x))}$ is bounded, where  $e^u(x)$ is any tangent vector to $\cW^u(x)$ at $x$. On the other hand 
the Oseledets theorem  asserts that almost surely, if $e(x)$ is any non-zero vector such that $e(x)\notin E^+(x)$, 
$\norm{df^{-n}_x (e(x))}$ grows exponentially at rate $\abs{\chi^-}$. Hence $e^u(x)\in E^+(x)$ and we are done. 
\end{proof}

\subsection{Unstable lamination, dominated splitting and hyperbolicity}
It is natural to expect that  in the dissipative setting, uniform u-regularity is enough to characterize hyperbolicity.
Indeed, uniform u-regularity should provide 
uniform expansion along some field of    directions, which,  together with volume contraction  yields uniform hyperbolicity. 
The basic technical tool needed to implement this idea is that of  \emph{dominated splitting}.
Recall that a {dominated splitting} on some invariant set $\Lambda$ is a splitting 
of the form 
$T\cd\rest{\Lambda} = E^s\oplus E^c$ for which there exists $C>0$ and $\la<1$ such that 
$$\frac{\norm{df^n\rest{E^s}}}{\norm{df^n\rest{E^c}}} \leq C\lambda^n. $$ Then this splitting is automatically continuous, and if 
$\abs{\jac(f)}\leq 1$ the direction $E^s$ is contracting. The existence of a dominated splitting for $f$ 
along $J$ is a way to formalize the ``absence of critical points'' on $J$. 

Our first  result can be viewed as a     version of \cite{lyubich peters} in a (greatly) simplified setting. 

\begin{prop}\label{prop:dominated}
 Let $f$ be a complex Hénon map with $\abs{\jac(f)}\leq 1$. If every point in $\jstar$ is uniformly u-regular and if $f$ admits a 
 dominated splitting on $\jstar$, then $f$ is hyperbolic. 
\end{prop}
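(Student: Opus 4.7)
The plan is to upgrade the dominated splitting $T\cd|_{\jstar} = E^s\oplus E^c$ to a hyperbolic splitting, after which Theorem \ref{thm:saddle} delivers hyperbolicity of $f$. Two preliminary observations get things started. First, a dominated splitting is automatically continuous on the compact set $\jstar$, so the angle between $E^s$ and $E^c$ is uniformly bounded below. Combined with $|\jac(f)|\leq 1$ and the constancy of the Jacobian for a Hénon map, this gives that $\|df^n|_{E^s}\|\cdot\|df^n|_{E^c}\|$ is uniformly bounded; multiplying with the domination ratio $\|df^n|_{E^s}\|/\|df^n|_{E^c}\|\leq C\lambda^n$ yields $\|df^n|_{E^s}\|^2\leq C'\lambda^n$, so $E^s$ contracts uniformly. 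Second, $T\cW^u = E^c$ along $\jstar$: this is tautological at saddle points (where $T_pW^u(p)$ is the expanding eigendirection of $df^k|_p$, which must coincide with $E^c(p)$ since $E^s$ contracts), and extends to every $x\in\jstar$ by the $C^1$-convergence $W^u_r(p_n)\to\cW^u_r(x)$ from uniform u-regularity together with continuity of $E^c$.

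The heart of the argument is then to invoke Proposition \ref{prop:uuniform} along $T\cW^u$, which produces uniform expansion along $E^c$ provided we can verify $G^+|_{\cW^u_r(x)}\not\equiv 0$ for every $x\in\jstar$. At a saddle point $p$ this is immediate from Proposition \ref{prop:regularG}, since $p$ is regular in the sense of \S\ref{subs:size} (its local stable and unstable manifolds are distinct). To promote the property to all of $\jstar$ I would argue by contradiction: if
\[\Lambda := \set{x\in\jstar:\,\cW^u_r(x)\subset K^+}\]
were nonempty, it would be closed by continuity of the lamination and of $G^+$, and using $f$-invariance of the lamination together with forward invariance of $K^+$ one could extract a nonempty compact $f$-invariant subset $\Lambda_0\subset\Lambda$. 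On $\Lambda_0$, the iterates $(f^n|_{\cW^u_r(x)})$ form a normal family of holomorphic maps, so Cauchy estimates along $T\cW^u$ yield $\chi^+(\nu)\leq 0$ for any ergodic invariant measure $\nu$ supported in $\Lambda_0$; combined with the first step this also gives $\chi^-(\nu)<0$. But such an ergodic measure on $\jstar$ with no positive Lyapunov exponent is incompatible with the structure of $\jstar$: by Pesin theory the generic orbit would admit a two-dimensional local Pesin stable manifold, producing a local attracting region, which contradicts the density in every neighborhood of saddle periodic points with strictly positive unstable exponent.

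Once the input of Proposition \ref{prop:uuniform} has been secured in this way, uniform expansion along $E^c = T\cW^u$ follows; combined with uniform contraction along $E^s$ from the first step, $\jstar$ is a hyperbolic set, and Theorem \ref{thm:saddle} closes the proof. The main obstacle is clearly the Fatou-leaf exclusion in the second step: the crude continuity argument (positivity of $\psi(x):=\sup_{\cW^u_r(x)}G^+$ on dense saddles plus continuity of $\psi$) does not on its own rule out vanishing of $\psi$ on a proper closed subset of $\jstar$, and one really needs to couple the ergodic consequences of the dominated splitting with the dissipation hypothesis to preclude an ``attractor'' hiding in $\jstar$, in the spirit of the (far more elaborate) argument in \cite{lyubich peters}.
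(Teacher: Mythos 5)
Your approach is genuinely different from the paper's, and it has a real gap at the crucial step. The paper's own proof is short: it observes that a dominated splitting on $\jstar$ automatically produces a strong stable lamination $\cW^s$, hence uniform s-regularity, after which Theorem~\ref{thm:us regularity} applies directly when $\abs{\jac(f)}<1$, and one merely reruns its proof (with the negative exponent now supplied by the splitting rather than dissipation) when $\abs{\jac(f)}=1$. You instead discard the stable side entirely and try to feed Proposition~\ref{prop:uuniform} directly, which requires $G^+\rest{\cW^u_\loc(x)}\not\equiv 0$ for every $x\in\jstar$; this is exactly where the difficulty lies.

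The first part of your argument is sound: the boundedness of $\norm{df^n\rest{E^s}}\cdot\norm{df^n\rest{E^c}}$ via the Jacobian and the bounded angle, giving uniform contraction of $E^s$, and the identification $T\cW^u=E^c$ along $\jstar$ by $C^1$-convergence from saddles. The gap is in the Fatou-leaf exclusion. You let $\Lambda=\set{x:\cW^u_r(x)\subset K^+}$, extract an ergodic $\nu$ on a compact invariant subset, and use Cauchy estimates along $T\cW^u=E^c$ to get $\chi^+\leq 0$, together with $\chi^-<0$ from the first step. You then assert that a $\nu$ with no positive Lyapunov exponent would have a two-dimensional local Pesin stable manifold, producing an attractor incompatible with density of saddles. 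But a two-dimensional Pesin stable manifold requires \emph{both} exponents strictly negative; if $\chi^+=0$ and $\chi^-<0$ there is only a one-dimensional Pesin stable manifold and no attracting region, and nothing in your argument rules this case out. Indeed $\chi^+\leq 0$ and $\chi^+\geq 0$ (the latter being the standard fact, for measures on $\jstar$, that you implicitly invoke) combine precisely to force $\chi^+=0$, which is the case your Pesin argument cannot handle.

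The way to close this is exactly the ingredient you dropped: the dominated splitting also gives uniform \emph{s}-regularity, so $\cW^s_\loc(x)$ exists and, by Lemma~\ref{lem:pesin_regular}, coincides with the one-dimensional Pesin stable manifold tangent to $E^-(x)$. On the other hand, Lemma~\ref{lem:Oseledets} (which applies as soon as $\chi^-<0\leq\chi^+$, including $\chi^+=0$) makes $\cW^u_\loc(x)$ tangent to $E^+(x)$. Since $E^+(x)\neq E^-(x)$ by Oseledets, $\cW^u_\loc(x)\neq\cW^s_\loc(x)$, so $x$ is regular and Proposition~\ref{prop:regularG} forces $G^+\rest{\cW^u_\loc(x)}\not\equiv 0$, contradicting $x\in\Lambda$. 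This is essentially the content of the proof of Theorem~\ref{thm:us regularity}; once you use the stable lamination the two arguments converge. Without it, the ergodic contradiction you propose does not get off the ground in the neutral case $\chi^+=0$.
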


\begin{proof}
Dominated splitting implies the existence of a strong stable lamination $\cW^s$
 in a neighborhood of $\jstar$, hence points of $\jstar$ are 
uniformly s-regular. Then if $\abs{\jac(f)}<1$, the result  follows directly  from Theorem \ref{thm:us regularity}. In the general case 
we    just have to repeat the proof of Theorem \ref{thm:us regularity}, the only difference being
 that dissipativity was used there to show 
that $\nu$ has a negative exponent while here this follows from the dominated splitting assumption.  
\end{proof}

The  idea of dominated splitting  shows that hyperbolicity   already holds 
under the assumptions of Proposition \ref{prop:uuniform}:

\begin{prop}\label{prop:uuregular}
Let $f$ be a complex Hénon map with $\abs{\jac(f)}\leq  1$. If every point in  $\jstar$ is uniformly u-regular and 
for every $x\in \jstar$, $G^+\rest{\cW^u_\loc(x)}\not \equiv 0$ then $f$ is hyperbolic. 
\end{prop}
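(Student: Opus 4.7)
The plan is to reduce to Proposition \ref{prop:dominated} by establishing a dominated splitting on $\jstar$ with central bundle $E^c := T\cW^u$. Once a dominated splitting is produced, Proposition \ref{prop:dominated} immediately yields the hyperbolicity of $f$.

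First I would apply Proposition \ref{prop:uuniform} under the stated hypotheses: this gives a constant $\la>1$ and (by Remark \ref{rmk:continuous}) a continuous adapted metric $\abs{\cdot}''$ on $T\cW^u\rest{\jstar}$ with $\abs{df_x(e)}''\geq\la\abs{e}''$ for $x\in\jstar$ and $e\in T_x\cW^u$. Uniform u-regularity together with Proposition \ref{prop:lamination} guarantee that $E^c=T\cW^u\rest{\jstar}$ is a continuous, $df$-invariant complex line sub-bundle of $T\cd\rest{\jstar}$.

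The key step is to produce an $f$-invariant continuous complementary line field $E^s\subset T\cd\rest{\jstar}$ making $E^s\oplus E^c$ dominated. Fix the orthogonal splitting $T\cd\rest{\jstar}=E^c\oplus(E^c)^\perp$; since $E^c$ is $df$-invariant, $df$ writes in this frame as an upper-triangular matrix with diagonal entries $A_x,D_x$. The constancy of the Hénon Jacobian gives $\abs{A_xD_x}=\abs{\jac f}\leq 1$, while $\abs{A_x}\geq\la$, hence $\abs{D_x}\leq\la\inv$. Parameterizing lines transverse to $E^c_x$ by their slope $\sigma$ relative to $(E^c)^\perp_x$, the induced action of $df\inv$ on slopes is affine with linear part of modulus $\abs{D_x/A_x}\leq\la^{-2}$, so pulling back any continuous transverse complement (for instance $(E^c)^\perp$ itself) along $f^n$ produces a uniformly Cauchy sequence of continuous line fields. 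Its limit $E^s$ is continuous and $df$-invariant, and expressing $df$ in the adapted frame $E^c\oplus E^s$ gives $\norm{df^n\rest{E^s}}\leq C(\abs{\jac f}/\la)^n$ and $\norm{df^n\rest{E^c}}\geq\la^n$, so $\norm{df^n\rest{E^s}}/\norm{df^n\rest{E^c}}\leq C\la^{-2n}$: this is the required dominated splitting on $\jstar$.

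The main technical obstacle is the uniform convergence of this graph transform and the continuity of $E^s$; both amount to a standard contraction-mapping argument once the uniform bound $\abs{D_x/A_x}\leq\la^{-2}<1$ on $\jstar$ is in place, which follows from the constancy of $\jac f$ and the expansion rate $\la$. With the dominated splitting established, Proposition \ref{prop:dominated} completes the proof.
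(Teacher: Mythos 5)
Your proof is correct and reduces, exactly as the paper does, to Proposition~\ref{prop:dominated} by manufacturing a dominated splitting on $\jstar$ with central bundle $E^c=T\cW^u\rest{\jstar}$; the hyperbolicity then follows. The only real difference is how the transverse bundle is produced. The paper writes $df$ in an adapted frame $(e_x,f_x)$ (with $e_x$ of unit norm in the adapted metric of Remark~\ref{rmk:continuous} and $\det(e_x,f_x)=1$), obtaining an upper-triangular matrix with diagonal entries $\lambda_x$ and $\lambda_x^{-1}J$, and then quotes the cone criterion for dominated splitting (Sambarino) after checking directly that the cone field about $E^c$ is strictly contracted. You instead run the graph transform explicitly: you observe that the backward action on lines transverse to $E^c$ is affine with uniformly small linear part, pull back an arbitrary transversal line field by $f^{-n}$, and pass to the limit to get $E^s$. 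These are two packagings of the same contraction argument, and both hinge on the upper-triangular form with $\abs{A_x}\geq\la$ and $\abs{D_x}\leq\abs{\jac f}/\la$. One small wrinkle to watch in your version: you work in an ambient-orthonormal frame (so $\abs{A_xD_x}=\abs{\jac f}$ holds exactly) while simultaneously asserting $\abs{A_x}\geq\la$ pointwise, which is guaranteed only for the adapted metric of Remark~\ref{rmk:continuous}, not the ambient one. This is harmless: either adopt the paper's mixed normalization ($\norm{e_x}''=1$, $\det(e_x,f_x)=1$), or note that the two metrics are uniformly equivalent along $\jstar$ so the discrepancy is absorbed into the multiplicative constant $C$ allowed in the definition of dominated splitting.
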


Applying  Proposition \ref{prop:regularG} yields the following corollary, which generalizes (and gives a new approach to) 
Theorem \ref{thm:uniform regularity}. 

\begin{cor}
Let $f$ be a complex Hénon map with $\abs{\jac(f)}\leq  1$.  
If every point in  $\jstar$ is regular and uniformly u-regular then $f$ is hyperbolic. 
\end{cor}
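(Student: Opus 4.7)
The plan is to combine the two immediately preceding results, namely Proposition \ref{prop:uuregular} and Proposition \ref{prop:regularG}, directly. The hypotheses of the corollary almost exactly match the hypotheses of Proposition \ref{prop:uuregular}: we have $|\jac(f)|\leq 1$ and uniform u-regularity at every point of $\jstar$. The only missing piece in order to quote Proposition \ref{prop:uuregular} is the non-vanishing condition $G^+\rest{\cW^u_\loc(x)}\not\equiv 0$ for every $x\in\jstar$, and this is exactly the conclusion that Proposition \ref{prop:regularG} supplies from the assumption of regularity.

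Concretely, the steps I would carry out are the following. First, I would invoke the uniform u-regularity hypothesis together with Proposition \ref{prop:lamination} to ensure that the local unstable disks $\cW^u_\loc(x)$ are well defined at every point of $\jstar$, and in particular coincide with the objects appearing in the statement of Proposition \ref{prop:regularG}. Second, I would apply Proposition \ref{prop:regularG} pointwise: since every $x\in\jstar$ is assumed regular, we obtain $G^+\rest{\cW^u_\loc(x)}\not\equiv 0$ for all such $x$. Third, with this property established, every hypothesis of Proposition \ref{prop:uuregular} is met, and I would conclude hyperbolicity of $f$ by a direct application of that proposition.

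There is essentially no serious obstacle here, since the corollary is designed as a bookkeeping consequence of the two propositions just proved. The only minor point to verify is the compatibility of the notion $\cW^u_\loc(x)$ between Propositions \ref{prop:regularG} and \ref{prop:uuregular}, but this is built into the definitions in \S\ref{subs:size} and requires no additional argument.
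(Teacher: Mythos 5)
Your proposal is correct and matches the paper's intended argument exactly: the corollary is obtained by feeding the conclusion of Proposition~\ref{prop:regularG} (regularity implies $G^+\rest{\cW^u_\loc(x)}\not\equiv 0$) into the hypotheses of Proposition~\ref{prop:uuregular}. The paper introduces the corollary precisely with the phrase ``Applying Proposition~\ref{prop:regularG} yields the following corollary,'' so there is nothing to add.
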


\begin{proof}[Proof of Proposition \ref{prop:uuregular}]
By the cone criterion for dominated splitting  (see  \cite[Prop. 2.2]{sambarino}) it is enough to 
 prove that for every $x\in \jstar$ there exists a cone $\mathcal{C}_x$ about $T_x \cW^u(x)$ 
 in $T_x\cd$ such that  the field of  cones   $(\mathcal{C}_x)_{x\in \jstar}$  is strictly contracted by the dynamics. Then the 
 result follows from Proposition \ref{prop:dominated}. 
By  Proposition \ref{prop:uuniform}   and Remark \ref{rmk:continuous}
there is a continuous Riemannian metric on $T\cW^u\rest{\jstar}$ which is immediately expanded by the dynamics. 
Let 
$(e_x)_{x\in \jstar}$ be a field of tangent vectors to $\cW^u$   of unit norm relative to this metric, and 
$f_x$ be orthogonal to $e_x$ in $T_x\cd$ (relative to the ambient Riemannian structure) and such that 
$\det(e_x, f_x) = 1$. For small $\e$, define a continuous  field of cones  $\mathcal{C}_x^\e\subset T_x\cd$   by 
$$\mathcal{C}_x^\e = \set{ue_x+vf_x, \ \abs{v} \leq  \e \abs{u}}.$$ 
Working in the frame $\set{(e_x, f_x), x\in   \jstar}$, 
the matrix expression of $df_x$  is of the form 
$$\begin{pmatrix} 
\lambda_x & a(x) \\ 0 & \lambda_x\inv J
\end{pmatrix},$$
where $\abs{\lambda_x}\geq \lambda _0>1$ and 
  $J$ is the Jacobian, so $\abs{J}\leq1$. Since the frame $(e_x, f_x)$ is continuous, $a(\cdot)$ is bounded. Then one checks easily that if $\e$ is so small that $\la_0 -\e\norm{a}>1$, then $$df_x (  \mathcal{C}_x^\e) \subset  \mathcal{C}_{f(x)}^{\lambda_0\inv\e}.$$
 Hence the field of cones $(C_x)_{x\in \jstar}$ is strictly contracted  by the tangent dynamics and we are done (note that a similar 
 argument appears in \cite{closing}). 
\end{proof}

The next result shows that uniform expansion can indeed be deduced   
from the geometric property of uniform u-regularity. Assume that every $x\in \jstar$ is uniformly regular of size $4r$. 
Recalling the construction of global unstable manifolds from local ones, for $x\in \jstar$ we define  
\begin{equation}\label{eq:global_manifold}
\cW^u(x) = \bigcup_{n\geq 0} f^n(\cW^u_r(f^{-n}(x))).
\end{equation}
It follows from this definition that 
 $f^{-1}(\cW^u(x))=f^{-1}(\cW^u_r(x))\cup \cW^u(f^{-1}(x))$, 
 hence  $f^{-1}(\cW^u(x))$  contains $ \cW^u(f^{-1}(x))$ and 
 it is not a priori clear that the $\cW^u(x)$ define an invariant family of curves. 
  However,  if  $\cW^u(x)$ is biholomorphic to $\cc$ for every $x\in \jstar$  
 then $f^{-1}(\cW^u(x))= \cW^u(f^{-1}(x))$, for otherwise $\cW^u(f^{-1}(x))$ would strictly contain 
 $f^{-1}(\cW^u(x))$, and it would be 
 a complex submanifold of $\cd$ biholomorphic to the Riemann sphere, which is  contradictory. 
 The following theorem confirms the expectation that the 
 parabolicity of leaves in $J^-$ is  associated with expansion (compare e.g. \cite[\S 4]{lyubich minsky}).

%
  \begin{thm}\label{thm:uuregular global}
  Let $f$ be a dissipative complex Hénon map. 
  If every point    $x\in \jstar$ is uniformly u-regular and in addition 
  $\cW^u(x)$  is biholomorphic to 
$\cc$, then $f$ is hyperbolic. 
  \end{thm}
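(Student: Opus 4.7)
The plan is to establish a uniform dominated splitting on $\jstar$ and then conclude hyperbolicity from Proposition~\ref{prop:dominated}. Parabolicity of leaves is used to produce an affine structure on the unstable lamination, from which a uniform Lyapunov spectral gap will be extracted.

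First, I use the parabolicity assumption to put an affine structure on leaves. For each $y\in\jstar$, normalize the uniformization $\psi_y\colon\cc\to\cW^u(y)$ by $\psi_y(0)=y$ and $\psi_y'(0)=e(y)$, a unit tangent to $\cW^u(y)$. Since $f$ maps $\cW^u(y)$ biholomorphically onto $\cW^u(f(y))$, in these coordinates one has $f\circ\psi_y=\psi_{f(y)}\circ M_y$ with $M_y(z)=\lambda(y)z$, where $|\lambda(y)|=a(y):=|df_y\cdot e(y)|$. The function $a$ is continuous on $\jstar$.

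Next I extract a lower bound on iterated products from the parabolicity. Writing the global leaf as the increasing union
\[
\cW^u(x) = \bigcup_{n \ge 0} f^n\bigl(\cW^u_r(f^{-n}(x))\bigr),
\]
translating to the uniformization $\psi_x$ and using that each $\cW^u_r(f^{-n}(x))$ has uniform size $r$ in $\cd$ (hence, up to a uniform distortion constant, corresponds in its own uniformization to a set containing a disk of radius $r'>0$ at the origin), the piece $f^n(\cW^u_r(f^{-n}(x)))$ corresponds in the $\psi_x$-coordinate to a set containing $D(0, A_n(x)\cdot r')$, where
\[
A_n(x) := \prod_{k=1}^n a(f^{-k}(x)).
\]
Since this union equals $\cc\cong\cW^u(x)$, one must have $\sup_n A_n(x) = \infty$ for every $x\in\jstar$.

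Now let $\nu$ be any ergodic invariant probability measure on $\jstar$. Applying Birkhoff's ergodic theorem to $f^{-1}$ and the continuous observable $\log a$ gives, for $\nu$-almost every $x$,
\[
\tfrac{1}{n}\log A_n(x)\longrightarrow \int\log a\,d\nu.
\]
Combined with $\sup_n A_n(x) = \infty$, this forces $\int\log a\,d\nu\ge 0$. The left side equals the Lyapunov exponent $\chi^+(\nu)$ of $\nu$ along $T\cW^u$, so $\chi^+(\nu)\ge 0$. Dissipativity gives $\chi^+(\nu)+\chi^-(\nu)=\log|\jac f|<0$, whence $\chi^-(\nu)\le\log|\jac f|<0$. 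Thus for every ergodic measure on $\jstar$ the Lyapunov spectrum has a uniform gap $\chi^+(\nu)-\chi^-(\nu)\ge-\log|\jac f|>0$.

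Finally, this uniform spectral gap is converted to a continuous dominated splitting $T\cd|_{\jstar}=E^s\oplus E^c$ with $E^c=T\cW^u$, by mimicking the cone-field construction of Proposition~\ref{prop:uuregular}: one builds a continuous metric on $T\cW^u|_{\jstar}$ adapted to the backward-expansion estimate, in the spirit of Remark~\ref{rmk:continuous}, and then a cone argument as in \cite[Prop.~2.2]{sambarino} (along the lines of the proof of Proposition~\ref{prop:uuregular}) produces the dominated splitting. Proposition~\ref{prop:dominated} then gives hyperbolicity of $f$. The main obstacle is precisely this last step, where the measure-theoretic spectral gap must be promoted to a pointwise uniform dominated splitting; abstractly this is classical (Mañé / Cao / Bochi--Viana), but the cleanest implementation in the present complex-analytic framework is likely a direct adaptation of the cone-field argument of Proposition~\ref{prop:uuregular}, with the backward-expansion lower bound $A_n(x)\to\infty$ playing the role of the nondegeneracy hypothesis $G^+|_{\cW^u_\loc(x)}\not\equiv 0$.
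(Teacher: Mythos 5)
Your proposal follows the same top-level strategy as the paper --- use parabolicity together with the affine structure on leaves to extract expansion along $T\cW^u$, then produce a dominated splitting and invoke Proposition~\ref{prop:dominated} --- but the mechanism you use to quantify the expansion is genuinely different from the paper's, and there are two gaps.

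The first, smaller gap is in the deduction of $\sup_n A_n(x)=\infty$ from $\bigcup_n U_n=\cc$, where $U_n=\psi_x^{-1}\bigl(f^n(\cW^u_r(f^{-n}(x)))\bigr)$. You establish only the \emph{lower} inclusion $D(0,A_n(x)r')\subset U_n$; that tells you nothing about $\sup_n A_n$, since a bounded $A_n$ is consistent with the $U_n$ having long unbounded tentacles filling out $\cc$. What is needed is the matching \emph{upper} bound $\psi_y^{-1}(\cW^u_r(y))\subset D(0,R')$ uniformly in $y\in\jstar$ (so that $U_n\subset D(0,A_n(x)R')$ and $\sup A_n<\infty$ would trap $\bigcup U_n$ in a bounded disk). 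This upper bound is true, but it requires one more Koebe step: one must use uniform u-regularity at a radius larger than $r$ (say $5r$) and apply Koebe distortion to $(\pi\circ\psi_y)^{-1}$ on $D(0,5r)$ to deduce $\psi_y^{-1}(\cW^u_r(y))\subset D(0,cr)$ for a universal $c$. This step is absent from your write-up and is not a cosmetic omission.

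The second, substantial gap is the one you yourself flag: converting the measure-theoretic statement ``$\chi^+(\nu)\geq 0$ for every ergodic $\nu$'' into a pointwise dominated splitting. This cannot be done by ``mimicking'' Proposition~\ref{prop:uuregular}, because that proof consumes as input the \emph{uniform} expansion estimate from Proposition~\ref{prop:uuniform}; an a.e.\ statement is strictly weaker. To close the argument along your lines one would have to invoke an ergodic-optimization theorem (that $\inf_\nu\int\log a\,d\nu=\lim_n\frac1n\inf_x\sum_{k<n}\log a(f^k x)$), deduce $\|Df^n_x|_{T\cW^u}\|\geq e^{-\e n}$ for every $\e>0$ and $n$ large, and then feed this sub-exponential lower bound into the singular-value criterion of Bochi--Gourmelon (it suffices because one can take $\e<\frac12|\log|\jac f||$). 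The paper avoids this entirely. It introduces the Green function $G^+$, uses its H\"older continuity together with Koebe distortion to prove a \emph{two-sided uniform} bound $C_\eta^{-1}\leq R_\eta(x)\leq C_\eta$ on the Kobayashi-type radius along leaves, and then reads off the uniform pointwise inequality $\|Df^n_x|_{T\cW^u}\|\geq C_\eta^{-2}$ directly --- exactly the hypothesis of Bochi--Gourmelon --- with no excursion into invariant measures. In short, parabolicity is what makes $G^+$ nonconstant on each leaf, and the H\"older regularity of $G^+$ is the structural input you are missing that upgrades ``$\sup_n A_n(x)=\infty$ pointwise'' to a uniform lower bound on $\|Df^n|_{T\cW^u}\|$.

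\end{document}
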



Remark that the definition of $\cW^u(x)$ in \eqref{eq:global_manifold} a priori depends on $r$.
The   theorem  shows that  if these manifolds are biholomorphic to 
$\cc$, this is actually not the case.

\begin{proof}
For every $x\in \jstar$, fix a uniformization $\psi^u_x: \cc\tilde{\to} \cW^u(x)$ such that $\psi^u_x(0)=x$, 
 which is normalized by $\abs{(\psi^u)'(0)} = 1$.  
For $\eta>0$, define $R_\eta(x)$ to be the maximal radius of a round disk in $\cc$ such 
that $G^+ \circ \psi^u_x\rest{D(0, R_\eta(x))} \leq \eta$ (this is similar but not identical to the definition of 
$\rho_\eta$ in  Proposition \ref{prop:uuniform}). Since $\cW^u(x)$ is an entire curve contained in $J^-$, $G^+\rest{\cW^u(x)}$ is unbounded so $R_\eta(x)$ is finite. We claim that for every $\eta>0$, there exists $C_\eta>0$ such that 
\begin{equation} \label{eq:uniform_R}
\text{for every }x\in \jstar, \ C_\eta\inv\leq  R_\eta(x) \leq C_\eta. 
\end{equation}
 
Indeed, fix $x\in \jstar$ and let us show that $R_\eta$ is locally uniformly bounded from above and below in a neighborhood of $x$. 
Then by compactness these bounds will be  uniform   on $\jstar$. 
Viewed in the unstable parameterizations $f$ is affine so it maps  circles to  circles. Let  $\Delta^u(x, R) = \psi^u_x(D(0, R))$. We first
claim that  there exists   $k\geq 0$ such that $f^{-k}(\Delta^u(x, R_\eta(x))$ is contained in   $\cW^u_r(f^{-k}(x))$. 
Indeed by definition of $\cW^u(x)$, for every $x'\in \fr \Delta^u(x, 4R_\eta(x))$, there exists $k\geq 0$ such that 
$f^{-k} (x')\in \cW^u_r(f^{-k}(x))$.  As in the proof of Proposition \ref{prop:uuniform},  
the Koebe distortion theorem implies that there is a coordinate $\pi: \cW^u_{4r}(f^{-k}(x))\to \cc$ such that 
 that for $s\leq r$, 
$\pi(\Delta^u(f^{-k}(x), s))$ is approximately a disk of radius $s$. 
Now  $f^{-k} \lrpar{\Delta^u(x, R_\eta(x))}$ is a round disk in the  affine coordinate, and it 
  possesses a boundary point in 
 $\cW^u_{r/2}(f^{-k}(x))$,  so it follows that it is completely contained in $\cW^u_r(f^{-k}(x))$.
Therefore, replacing $x$ by $f^{-k}(x)$ 
we can assume that $\psi^u_x(D(0, R_\eta))\subset \cW^u_r(x)$. 

  By uniform u-regularity, for   
$y$ close to $x$, $ \cW^u_{r}(y)$ is a graph of slope at most 1  over a disk of size $r$ relative to the projection $\pi$. 
Thus from Koebe distortion again, we infer that for $y$ close to $x$, 
  the distance induced by the normalized affine structure along    the $\cW^u_r(y)$
is      equivalent to the ambient distance. In particular there exists a 
constant $K$ depending only on $r$ such that for $y$ close to $x$ and $\eta$ as above, 
$$ K\inv \dist(y, \set{G^+ = \eta}) \leq R_\eta(y) \leq K \dist(y, \set{G^+ = \eta}).$$  Finally by the 
Hölder continuity of $G^+$
$\dist(y, \set{G^+ = \eta})$ is bounded from below by $C\eta^\theta$, 
and if $\dist(y,x)\leq r$ it is bounded from above by $Cr$. This completes the proof of \eqref{eq:uniform_R}. 

Then, from the invariance relation of $G^+$
we have   
  $$f(\Delta^u(x, R_\eta(x)))   = \Delta^u(f(x), R_{d\eta}(f(x)))\supset \Delta^u(f(x), R_{\eta}(f(x))), $$ hence 
  for every $n\geq 0$ we infer that 
 $f^n(\Delta^u(x, R_\eta(x))) \supset \Delta^u(f^n(x), R_{\eta}(f^n(x)))$.
In particular for every $x\in \jstar$  and every $n\geq 1 $ we have that 
that $f^n(\Delta^u(x, C_\eta))\supset \Delta^u(f^n(x), C_\eta\inv)$.  Again since $f$ is affine in the unstable parameterizations we deduce that for every $t>0$, $$f^n(\Delta^u(x, tC_\eta))\supset \Delta^u(f^n(x), tC_\eta\inv).$$
Finally taking the derivative at $t=0$ we conclude that $\norm{Df^n_x\rest{T_x\cW^u}}\geq (C_\eta)^{-2}$. 

This bound in turns implies the existence of a dominated splitting along $\jstar$. 
This follows from the criterion of Bochi-Gourmelon \cite[Thm A]{bochi gourmelon} (see also Yoccoz \cite{yoccoz}). Indeed since 
$f$ has constant Jacobian,  for $x\in \jstar$   
 the singular values of $Df^n_x$ are $\sigma_n^+$ and $\sigma_n^-= J^n/\sigma_n^+$, where $J    = \abs{\jac(f)}<1$, 
 and $\sigma_n^+ \geq (C_\eta)^{-2}$.
 Therefore $$\frac{\sigma_n^+}{\sigma_n^-}  = \frac{(\sigma_n^+)^2}{J^n}\geq   {\frac{1}{C_\eta^4 J^n}}$$ so 
 \cite{bochi gourmelon} applies and we get a dominated splitting on $\jstar$. Applying proposition \ref{prop:dominated} concludes the proof. 
\end{proof}

\begin{rmk}
If $J^-$ is  globally laminated (outside a finite set  of periodic points, say) one might expect that 
the additional assumption that $\cW^u(x)\simeq \cc$ for every $x$  in Theorem \ref{thm:uuregular global}
  would  follow  from the density of unstable manifolds of saddle points. Unfortunately there are examples of 
minimal Riemann surface laminations 
containing both parabolic and hyperbolic leaves (see \cite[Thm 6.6]{ghys_survey})
\end{rmk}

\subsection{Concluding remarks}
\subsubsection{}
Uniform s-regularity on $\jstar$ does not imply hyperbolicity. Indeed there 
 are examples of   Hénon mappings  with parabolic points and a dominated splitting on $\jstar$ 
 (see \cite{radu tanase, lyubich peters}).  It would be interesting to know whether uniform s-regularity on $\jstar$ implies the 
 existence of a dominated splitting.

\subsubsection{}
The only property of $\jstar$ that was used  in the various hyperbolicity criteria in this section is that 
$\jstar$ is a closed invariant set in which saddle periodic points are dense. 
So in all these results we could with  an arbitrary  closed invariant set $\Lambda$, in which saddle points are dense. The 
notion of   uniform u-regularity  has to be replaced 
 by uniform u-regularity along $\Lambda$, meaning that the uniform 
 size of unstable manifolds holds only   for  sequences of saddle points in $\Lambda$, and likewise for s-regularity. 
Then there are statements analogous  to Theorems 
\ref{thm:bs8_revisited}, \ref{thm:uniform regularity}, \ref{thm:us regularity} and \ref{thm:uuregular global}, in which uniform regularity 
is replaced by uniform regularity along $\Lambda$,  and the conclusion 
is that $\Lambda$ is a hyperbolic set.

\section{A topological criterion for   hyperbolicity}\label{sec:topological}

In this section we work in the setting of Conjecture \ref{conj:topological}:  We assume that  
 $f_0$ and $f_1$ are two complex Hénon maps such   that $f_0$ is hyperbolic, and 
   that there exist  respective  neighborhoods $N_0$ and $N_1$ 
 of $J_0=J_0^\varstar$ and $J_1^\varstar$ and a conjugating homeomorphism $\phi: N_0\to N_1$. 
 Our purpose is to show that $f_1$ is hyperbolic on $J_1^\varstar$. 
\subsection{Periodic points and their (un)stable manifolds.}

\begin{prop}\label{prop:periodic}
Let $f_0$ and $f_1$ be as in Conjecture \ref{conj:topological}. Then $\phi(J_0) = J_1^\varstar$. 
If $f_1$ is dissipative then  all periodic points  of $f_1$ on $J_1^\varstar$ are saddles. If $f_1$ is conservative the same holds provided $\phi$ is   H\"older continuous. 
\end{prop}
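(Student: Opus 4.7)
My plan is as follows. After shrinking $N_0$ to exclude the finitely many attracting periodic orbits of the hyperbolic map $f_0$, every periodic point of $f_0$ in $N_0$ is a saddle lying in $J_0$, and $\phi$ induces a bijection with the periodic points of $f_1$ in $N_1=\phi(N_0)$. The crux of the proposition is to show that, under the stated hypothesis, each image $q=\phi(p)$ of a saddle $p$ of $f_0$ is itself a saddle of $f_1$. Granting this, $\phi(J_0)=J_1^\varstar$ follows: by closure $\phi(J_0)\subseteq J_1^\varstar$, and conversely every saddle $q\in J_1^\varstar\subseteq N_1$ is of the form $\phi(p)$ with $p$ a periodic point of $f_0$ in $N_0$, hence a saddle in $J_0$ by our reduction.

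To control the type of $q$, I transport the local invariant manifolds of $p$ via $\phi$. Since $p$ is a hyperbolic saddle, $W^{s/u}_{\loc}(p)$ can be characterized purely topologically, e.g.\ as the connected component of $p$ in the set of points whose forward, resp.\ backward, orbit remains in a fixed small neighborhood of $p$. Consequently $\Sigma^{s/u}:=\phi(W^{s/u}_{\loc}(p))$ are topological $2$-disks through $q$ on which $f_1^{\mathrm{per}(q)}$, resp.\ $f_1^{-\mathrm{per}(q)}$, topologically contracts to $q$. Let $\lambda_1,\lambda_2$ denote the eigenvalues of $Df_1^{\mathrm{per}(q)}$ at $q$. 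I then rule out the non-saddle types case by case. If $q$ is an attractor, its real $4$-dimensional open basin pulls back under $\phi^{-1}$ to an open set attracted to $p$ under $f_0$, contradicting the $2$-real-dimensional nature of $W^s_{\loc}(p)$; the same mechanism excludes any semi-neutral $q$ whose neutral direction is parabolic with an attracting petal, since the petal combined with a strictly contracting direction produces a $4$-dimensional open basin. This covers \emph{all} non-saddle types in the dissipative case except the semi-neutral case with a Siegel (non-parabolic) center direction, which is ruled out because then the set of points converging to $q$ under $f_1^{-n}$ reduces to $\{q\}$, incompatible with the non-triviality of $\Sigma^u$.

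The delicate case, and the reason Hölder regularity is invoked when $f_1$ is conservative, is a parabolic $q$ with $|\lambda_1|=|\lambda_2|=1$ and no $4$-dimensional open basin (the model $(z+z^2,w)$ being prototypical, with basin of the origin of real dimension $2$). Here $\Sigma^u$ is topologically compatible with a parabolic repelling petal and a purely topological argument fails. Hölder regularity of $\phi$ breaks the symmetry quantitatively: if $\phi$ is $\alpha$-Hölder, then for $x\in W^u_{\loc}(p)$,
\[
\dist\lrpar{f_1^{-n}(\phi(x)),q}=\dist\lrpar{\phi(f_0^{-n}(x)),\phi(p)}\leq C\,\dist(f_0^{-n}(x),p)^\alpha\leq C'\rho^{n\alpha}
\]
for some $\rho<1$, so that $f_1^{-n}$ contracts $\Sigma^u$ to $q$ at an exponential rate. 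Since the contraction on a parabolic petal is only polynomial in $n$, $q$ cannot be parabolic, and all non-saddle types are thus excluded. The main obstacle is precisely this final conservative configuration: in the parabolic regime the dynamics is topologically too flexible to be distinguished from a saddle, and Hölder continuity is exactly what makes the exponential-versus-polynomial dichotomy effective.
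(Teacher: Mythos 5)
Your route is genuinely different from the paper's: you attempt a direct case analysis on the possible linearization types of $q=\phi(p)$, whereas the paper handles the dissipative case in one stroke via the hedgehog theory of [FLRT, LRT] and the conservative case via an adapted-norm estimate. You also obtain the first assertion $\phi(J_0)=J_1^\varstar$ as a consequence of saddle-preservation, whereas the paper derives it independently and more directly from equidistribution of periodic orbits towards the measure of maximal entropy (same entropy $\Rightarrow$ same dynamical degree $\Rightarrow$ $\phi_\varstar\mu_0=\mu_1$).

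The substantial gap is in your dissipative case. Your enumeration of semi-neutral types --- parabolic with attracting petal, and ``Siegel (non-parabolic)'' --- omits the Cremer case, where the neutral eigenvalue is irrationally indifferent but non-linearizable. In the semi-Cremer situation there is no open attracting basin, so the $4$-real-dimensional argument does not produce a contradiction, and it is not at all clear (and not justified in your write-up) that the set of points converging to $q$ under $f_1^{-n}$ reduces to $\{q\}$; the local dynamics is governed by a hedgehog, not a rotation disk. This is exactly the difficulty that the paper's appeal to hedgehog theory is designed to resolve: the cited results produce, for \emph{any} semi-neutral dissipative germ, a non-trivial compact totally invariant set $\mathcal H$ on which $f^{q_n}\to\mathrm{id}$ along a subsequence, which is directly incompatible with the local topological conjugacy to a saddle. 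You need a replacement for this in the Cremer subcase, and it is not clear your strategy supplies one.

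A smaller issue is in the conservative case. Your ``exponential-versus-polynomial'' dichotomy is the right intuition but is only substantiated for the parabolic subcase; for a general neutral $Df_1$ at $q$ (Siegel $\times$ Siegel, Siegel $\times$ Cremer, Cremer $\times$ Cremer, etc.) the assertion that any orbit converging to $q$ does so at most polynomially is not a standard fact and is left unjustified. The paper's argument sidesteps all case-splitting: in an adapted norm making $Df_1$ at $q$ an $\e$-approximate isometry, any orbit converging to the fixed point satisfies $\|f_1^n(x)-q\|\geq(1-2\e)^n\|x-q\|$, while Hölder conjugacy forces exponential convergence at a fixed rate $\rho<1$; choosing $\e$ with $1-2\e>\rho$ gives the contradiction uniformly over all neutral types. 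You should replace your petal-rate heuristic by this adapted-norm bound.
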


\begin{proof}
The first assertion is a direct consequence of the equidistribution of periodic orbits. Indeed the topological conjugacy 
shows that $f_0$ and $f_1$ have the same entropy, hence the same dynamical degree. Since periodic orbits equidistribute towards the maximal entropy measure, we get that $\phi_\varstar\mu_0 = \mu_1$. Since $\supp(\mu_1) = J_1^\varstar$, we infer that $\phi(J_0)  = J_1^\varstar$.  (On the other hand it is unclear at this stage whether $J_1 ^\varstar = J_1$.) 

 Any periodic point on $J_1^\varstar$ admits 
a neighborhood in which it is topologically conjugate to a saddle. Let 
$p\in J_1^\varstar$ be some periodic point which we may suppose fixed. Assume that $f_1$ is dissipative.
 Then if $p$ is not a saddle
  it is semi-attracting. By the hedgehog theory of \cite{FLRT, LRT} there exists 
in some neighborhood of $p$ a non-trivial totally invariant set $\mathcal{H}$ 
made of points which do not converge to $p$ under backward nor forward iteration: indeed there is a subsequence 
$q_n$ such that $f^{q_n}\to \mathrm{id}$ on $\mathcal{H}$. 
This is not compatible with the local conjugacy to a saddle fixed point, therefore we conclude that $p$ is a saddle.

If $f$ is conservative and $p$ is not a saddle, then it is neutral.  
Since $\phi$ is Hölder, then points in 
$\phi(W^s_\loc (\phi\inv(p)))$ converge to the origin exponentially fast. On  the other hand for every $\e>0$
there exists a norm on $\cd$ for which 
$$(1-\e)\norm{v}\leq \norm{df_p(v)} \leq (1+\e)\norm{v}.$$ 
Indeed if $df_p$ is diagonalizable this is clear since the eigenvalues have modulus 1, 
and otherwise we can make $df_p$ triangular with the off-diagonal term as small as we wish, and take an adapted 
norm. Then if $x$ 
is close to $p$ and $f_1^n(x) \to p $ we infer that $\norm{f^n(x) - f^n(p)}\geq (1-2\e)^n \norm{x-p}$ which is contradictory 
if $\e$ is small enough.   Thus again we conclude that all periodic points on $\jstar_1$ are saddles. 
\end{proof}


For a saddle point $p$, we now denote by $W^s_r(p)$   the component of $W^s(p)\cap B(p,r)$ containing $p$.
We fix $r_0$ such that  for every $x\in J_0$, $W^s_{r_0}(x)$ (resp. $W^u_{r_0}(x)$)
is a properly embedded holomorphic disk with the property that there exist uniform 
$C>0$ and $0<\lambda<1$ such that for every $x'\in W^s_{r_0}(x)$ (resp. $W^u_{r_0}(x)$),  
$\dist(f^n(x), f^n(x'))\leq C\lambda^{\abs{n}}$ when $n\to \infty$ (resp  $n\to -\infty$). 
We also assume for further reference that  $f$ has   product structure 
 in  the $2r_0$-neighborhood of $\jstar_0$. 

\begin{prop}\label{prop:uniform}
Let $f_0$ and $f_1$ be as in Conjecture \ref{conj:topological}. There exists $r_1>0$ such that for any 
saddle periodic point 
$p$ for $f_1$, $W^s_{r_1}(p)$ (resp. $W^u_{r_1}(p)$) is a submanifold of $B(p,r_1)$. 
\end{prop}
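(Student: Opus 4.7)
The strategy is to transfer the uniform size of local stable manifolds from the hyperbolic map $f_0$ to the $f_1$-saddles via the topological conjugacy, by combining uniform continuity of $\phi$ and $\phi^{-1}$ with a Bowen-ball (dynamical stable set) argument.

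First fix compact neighborhoods $K_0\Supset J_0$ and $K_1\Supset \jstar_1$ so that $\phi:K_0\to K_1$ is a homeomorphism. By uniform continuity of $\phi^{\pm 1}$, choose uniform constants $r_1>0$ and $r_0'>0$ (independent of the saddle) such that for every $p\in \jstar_1$ with $q=\phi^{-1}(p)$,
$$\phi^{-1}(\overline{B(p,r_1)})\subset B(q,r_0/4)\qquad\text{and}\qquad \phi(\overline{B(q,r_0')})\subset B(p,r_1/4),$$
with $\overline{B(p,r_1)}\subset K_1$. By Proposition \ref{prop:periodic}, $q$ is a saddle periodic point of $f_0$ of the same period $k$ as $p$, so $W^s_{r_0/4}(q)$ and $W^s_{r_0'}(q)$ are nested closed holomorphic disks.

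The main step is to analyze the Bowen-type dynamical local stable set
$$V^s(p):=\{x\in \overline{B(p,r_1)}:f_1^n(x)\in \overline{B(f_1^n(p),r_1)}\text{ for every }n\ge 0\}.$$
Using the conjugacy identity $\phi^{-1}\circ f_1^n=f_0^n\circ\phi^{-1}$ (valid since all iterates remain in $K_1$) together with the hyperbolic product structure of $f_0$ at scale $r_0/4$, one shows that $\phi^{-1}(V^s(p))$ equals the Bowen stable set of $q$ for $f_0$ at that scale, which by hyperbolicity coincides with $W^s_{r_0/4}(q)$; hence $V^s(p)\subset \phi(W^s_{r_0/4}(q))$. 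Conversely, hyperbolic contraction yields $f_0^n(W^s_{r_0'}(q))\subset W^s_{r_0'}(f_0^n(q))$, whose $\phi$-image has diameter at most $r_1/4$ around $f_1^n(p)$; thus $\phi(W^s_{r_0'}(q))\subset V^s(p)$. Sandwiching gives
$$\phi(W^s_{r_0'}(q))\;\subset\; V^s(p)\;\subset\; \phi(W^s_{r_0/4}(q))\;\subset\; W^s(p),$$
with $V^s(p)$ a compact connected set containing $p$ in the topological interior of the disk $\phi(W^s_{r_0'}(q))$.

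The last step is to identify $V^s(p)$ with the subspace component $W^s_{r_1}(p)$ and to deduce the submanifold structure. Since $\phi\rest{W^s_{r_0'}(q)}$ is a homeomorphism into the $1$-complex-dimensional immersed curve $W^s(p)$, invariance of domain implies that $\phi(W^s_{r_0'}(q))$ is an intrinsic open neighborhood of $p$ in $W^s(p)$; together with the closed containment $V^s(p)\subset \phi(W^s_{r_0/4}(q))$ and connectedness, this forces $V^s(p)=W^s_{r_1}(p)$, and realizes $\phi^{-1}(W^s_{r_1}(p))$ as a closed subset of the holomorphic disk $W^s_{r_0/4}(q)$ containing the subdisk $W^s_{r_0'}(q)$. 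In the uniformization $\psi_p:\mathbb{C}\to W^s(p)$ with $\psi_p(0)=p$, this translates (via the sandwich and properness of $\phi$ on the disk) to $\psi_p^{-1}(W^s_{r_1}(p))$ being a bounded open neighborhood of $0$ on which $\psi_p$ is proper into $B(p,r_1)$, yielding the desired closed holomorphic submanifold.

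The hard part is precisely this last reconciliation of topological data (the subspace component $W^s_{r_1}(p)$ of an immersed curve) with intrinsic holomorphic data, uniformly in $p$: one must rule out the possibility that other sheets of $W^s(p)$ accumulate on the local piece through $p$ inside $B(p,r_1)$. The Bowen-ball sandwich is precisely the device enforcing this uniformity, by dynamically trapping $V^s(p)$ inside a single topological disk $\phi(W^s_{r_0/4}(q))$.
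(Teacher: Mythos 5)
Your proposal starts along the same lines as the paper: the choice of a uniform $r_1$ so that $\phi^{-1}\bigl(\overline{B(p,r_1)}\bigr)\subset B(q,r_0/4)$, and the observation that this traps $\phi^{-1}$ of a local stable set of $p$ inside the hyperbolic local stable manifold $W^s_{r_0/4}(q)$, are exactly the paper's key inclusion (the paper phrases it as: $\phi^{-1}(W^s_{r_1}(p))$ is a connected subset of $W^s(q)\cap B(q,r_0)$ containing $q$, hence lies in the component $W^s_{r_0}(q)$). So the first half of the sandwich is fine in spirit, even if the stated equality ``$\phi^{-1}(V^s(p))$ equals the Bowen stable set of $q$'' is overstated (only the containment you actually use follows; the reverse would require $\phi$ of that Bowen set to land back in $\overline{B(p,r_1)}$, which your choice of $r_1$ does not give).

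The gap is in the identification $V^s(p)=W^s_{r_1}(p)$, and this cannot be repaired as stated. The inclusion $W^s_{r_1}(p)\subset V^s(p)$ would require every point of the connected component $W^s_{r_1}(p)$ to have its forward $f_1$-orbit remain in $\overline{B(p,r_1)}$; but nothing at this stage forces pointwise contraction of $f_1$ along that component at the fixed scale $r_1$ (hyperbolicity of $f_1$ on $\jstar_1$ is precisely what is being proved, and $r_1$ is uniform, not adapted to $p$), so a point of $W^s_{r_1}(p)$ can perfectly well leave $\overline{B(p,r_1)}$ before eventually converging to $p$. There is also a topological obstruction: $V^s(p)$ is a closed, hence compact, subset of $\overline{B(p,r_1)}$, whereas $W^s_{r_1}(p)$ is by definition a relatively open subset of the entire curve $W^s(p)\cong\cc$ contained in the open ball, so it cannot be compact and the two sets cannot coincide. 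Consequently the closedness of $\phi^{-1}(W^s_{r_1}(p))$ in $W^s_{r_0/4}(q)$, on which your final uniformization step leans, is not established, and the step ``$\psi_p^{-1}(W^s_{r_1}(p))$ is a bounded open set on which $\psi_p$ is proper into $B(p,r_1)$'' is exactly the conclusion to be proved, not something that falls out of the sandwich. What is missing is the paper's contraction pull-back: one first notes that for some small $r=r(p)$ the component $W^s_r(p)$ is a properly embedded disk in $B(p,r)$; then, since $\phi^{-1}(W^s_r(p))$ is an intrinsic neighborhood of $q$ in $W^s(q)$ (invariance of domain) and $f_0^n$ contracts $W^s_{r_0}(q)$ toward $q$, there is $n=n(p)$ with $f_0^n(W^s_{r_0}(q))\subset\phi^{-1}(W^s_r(p))$; combining with the key inclusion yields $f_1^n(W^s_{r_1}(p))\subset W^s_r(p)$, hence $W^s_{r_1}(p)\subset f_1^{-n}(W^s_r(p))$, a properly embedded curve, from which proper embedding of $W^s_{r_1}(p)$ in $B(p,r_1)$ follows. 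Your write-up explicitly flags this ``last reconciliation'' as the hard part, but the Bowen sandwich by itself does not close it; the contraction pull-back (with the $p$-dependent $r$ and $n$, which need not be uniform) is the missing ingredient.
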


 \begin{proof}
Without loss of generality we treat the case of stable manifolds and assume that $p$ is fixed. 
 If $r_0$ is as above there exists $r_1>0$ such that for every $y\in \jstar_1$, 
$B(y,r_1)\Subset \phi(B(\phi\inv(y), r_0))$. We claim that for every saddle fixed  point $p$ for $f_1$, 
\begin{equation}\label{eq:inclusion}
W^s_{r_1}(p) \subset \phi\lrpar{W^s_{r_0}(\phi\inv(p))} \subset W^s(p).
\end{equation}
The right inclusion is obvious since belonging to 
$W^s(p)$ is characterized by the 
 topological   property that $f^{n}(y)\to p$. For the left inclusion, just observe that 
$\phi\inv\lrpar{W^s_{r_1}(p)}$ is a connected   subset of  $W^s(\phi\inv(p))\cap B(\phi\inv(p), r_0)$ containing $p$, 
hence it is  contained in $W^s_{r_0}(\phi\inv(p))$. 

To show that it is properly embedded, we first observe that there exists $r=r(p)$ such that 
$W^s_{r} (p)$ is properly embedded in $B(p,r)$. By the invariance of domain theorem $\phi\inv(W^s_{r} (p))$ is a 
neighborhood of $\phi\inv(p)$ in $W^s(\phi\inv(p))$. Thus it follows that there exists $ n =n(p)$ such that 
$f_0^n (W^s_{r_0}(\phi\inv(p))) \subset \phi\inv(W^s_{r} (p))$. 
Then  from \eqref{eq:inclusion}    we get that 
 $ f_1^n(W^s_{r_1}(p))  \subset  W^s_{r} (p) $, so $W^s_{r_1}(p) \subset f_1^{-n} W^s_{r_1}(p)$. From this we conclude that 
 $W^s_{r_1}(p)$ is properly embedded in $B(p,r_1)$, as desired. 
 \end{proof}

\begin{rmk} 
At this stage we know that stable manifolds are properly embedded in a ball of uniform size, but since in the last argument 
the quantities 
$n$ and $r$ are a priori not uniform in $p$, we have no uniformity for the geometry of $W^s_{r_1}(p)$. Obtaining such a uniformity 
will be  the purpose of the forthcoming arguments.
\end{rmk}

\subsection{Tube argument}

\begin{lem}[Tubular neighborhood lemma]\label{lem:tub}
If $\Delta$  is a  subvariety in $B(0, 2r)$ of size $r$ at 0 then  
 there exists $\eta = \eta(r)$ such that if $\Delta'$ is a subvariety in $B(0, 2r)$ such that 
$d_{H}(\Delta, \Delta')<\eta$ in $B(0,2r)$, then $\Delta'$ is a branched cover over $\Delta$ in $B(0, r/2)$. (Here $d_H$ denotes the
Hausdorff distance.)
\end{lem}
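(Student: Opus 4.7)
The plan is to reduce the claim to a properness statement for the orthogonal projection onto $T_0\Delta$. Fix orthonormal coordinates $(x,y)$ on $\cd$ so that $T_0\Delta=\{y=0\}$, and parameterize the relevant component of $\Delta$ as $\{(x,\varphi(x)):|x|<r\}$ with $\varphi(0)=0$ and $|\varphi'|\le 1$. Let $\pi(x,y)=x$; then $\pi\rest{\Delta}$ is a biholomorphism onto $D(0,r)$, so showing that $\Delta'$ is a branched cover over $\Delta$ in $B(0,r/2)$ amounts to showing that $\pi$ restricts to a finite proper surjective holomorphic map $V\to D(0,r/2)$, where $V:=\Delta'\cap\pi\inv(D(0,r/2))$.

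The first step is to use the Hausdorff closeness to confine $V$ to a narrow tube around the graph. Assume $d_H(\Delta,\Delta')<\eta$ in $B(0,2r)$. For any $(x',y')\in V$ pick $(x_0,\varphi(x_0))\in\Delta$ with $|x'-x_0|<\eta$ and $|y'-\varphi(x_0)|<\eta$; since $\varphi$ is $1$-Lipschitz, this gives $|y'-\varphi(x')|<2\eta$. Consequently $V\subset T:=\{|x|<r/2,\ |y-\varphi(x)|<2\eta\}$, and for $\eta$ small enough relative to $r$ the closed tube $\overline T$ is compact in $B(0,2r)$.

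Next I would verify that $\pi\rest{V}\colon V\to D(0,r/2)$ is proper. For a compact $K\subset D(0,r/2)$, the preimage $(\pi\rest{V})\inv(K)=\Delta'\cap (K\times\cc)$ is contained in the compact cylinder $K\times\overline{D(0,\max_K|\varphi|+2\eta)}\subset B(0,2r)$ by the tube estimate, and any accumulation point lies in $\Delta'$ by closedness of $\Delta'$ in $B(0,2r)$. Surjectivity follows from a density argument: for any $\xi\in D(0,r/2-\eta)$ the point $(\xi,\varphi(\xi))\in\Delta$ has a companion in $\Delta'$ within distance $\eta$, so $\pi(V)$ is $\eta$-dense in $D(0,r/2)$. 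But $\pi(V)$ is an analytic subset of $D(0,r/2)$ by Remmert's proper mapping theorem, hence either the full disk or a discrete set; density excludes the latter for $\eta$ small. A proper surjective holomorphic map between one-dimensional complex spaces is automatically a finite branched cover, which is the desired conclusion.

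The main technical point to watch is the tube confinement: the lateral face $\{|y-\varphi(x)|=2\eta\}$ of $T$ must be disjoint from $\Delta'$ so that $V$ does not escape sideways, and this is precisely what the Hausdorff bound combined with the slope-$1$ control on $\varphi$ provides. This step is where the precise definition of \emph{size $r$ at $0$} is used in an essential way; without slope control, a much smaller $\eta$ would be required to keep the tube inside $B(0,2r)$.
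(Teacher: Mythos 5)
Your overall strategy matches the paper's: confine $\Delta'$ to a neighborhood of the graph of $\Delta$ and then show that the orthogonal projection onto $T_0\Delta$ restricts to a branched cover. The paper uses the Schwarz lemma to sharpen the slope bound to $|\psi(x)|\le r/2$ so that for $\eta<r/4$ the variety $\Delta'$ is \emph{horizontal} in the bidisk $D(0,r)^2$, and then invokes the standard fact that a horizontal subvariety projects as a branched cover onto the first factor. You instead confine $\Delta'$ to the tighter tube $T=\{|x|<r/2,\ |y-\varphi(x)|<2\eta\}$, which does not need the Schwarz refinement, and then re-derive the branched-cover conclusion by hand; that is a fine and slightly more robust route.

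There is, however, a genuine gap in your surjectivity step. You argue that $\pi(V)$ is analytic by Remmert, hence either $D(0,r/2)$ or discrete, and that ``density excludes the latter for $\eta$ small.'' This is false as stated: for any \emph{fixed} $\eta>0$ one can produce a discrete, closed, $\eta$-dense analytic subset of $D(0,r/2)$ (a lattice of spacing $<\eta$), so $\eta$-density does not rule out discreteness. The correct way to finish is to observe that the tube estimate you have already established rules out vertical components of $\Delta'$: if a component of $\Delta'\cap\pi^{-1}(D(0,r/2))$ were contained in a line $\{x=x_0\}$, it would be a full vertical disk $\{x_0\}\times D\bigl(0,\sqrt{4r^2-|x_0|^2}\bigr)$ (since it is a closed curve in $B(0,2r)$), which leaves the tube $T$ as soon as $2\eta<r$. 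Hence $\pi$ is non-constant on every component of $V$, its image is open, and combined with the closedness coming from properness and the non-emptiness coming from your density observation, this gives $\pi(V)=D(0,r/2)$. Your tube confinement was exactly the right tool; only the last inference needs to be re-routed through it.
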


\begin{proof}
After  a unitary change of coordinates, $\Delta$ is a graph $y=\psi(x)$
of slope at most 1 in the  bidisk  $ D(0,r)^2$. Since $\psi'(0) = 0$ by the Schwarz lemma 
 we have $\abs{\psi'(x)}\leq \abs{x}/r$ so actually $\abs{\psi(x)}\leq r/2$. It follows that if 
 $\eta<r/4$ and $d_{H}(\Delta, \Delta')<\eta$ in  $ D(0,r)^2$, $\Delta'$ is horizontal in this bidisk. Thus 
  it is a branched cover over the first coordinate, hence over $\Delta$. 
\end{proof}

\begin{figure}[h]
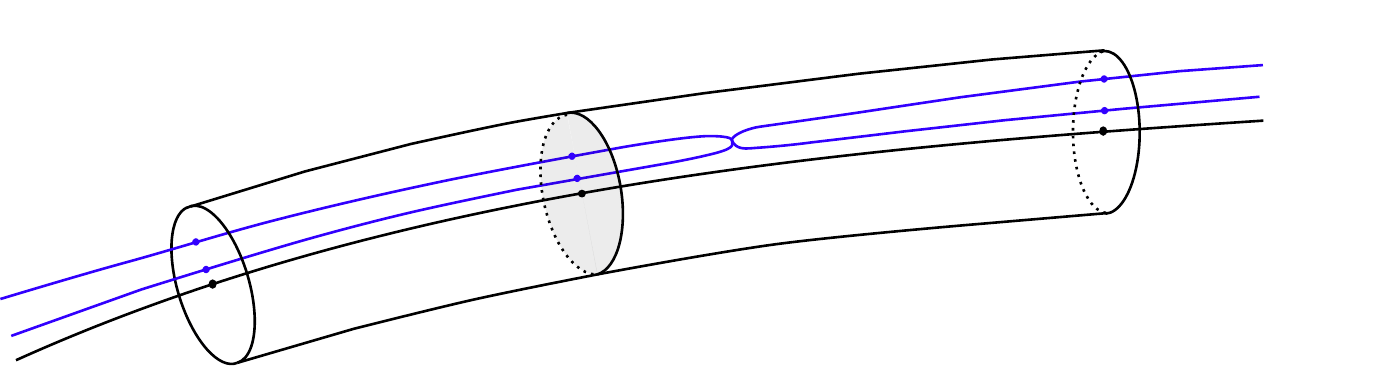
\caption{Tubular neighborhood lemma}
\end{figure}

The ``tube argument'' alluded to in the title consists in applying the previous lemma to construct invariant laminations 
in the setting of the conjecture. Here is a sample statement. 

\begin{prop}\label{prop:saddle regular}
Let $f_0$ and $f_1$ be as in Conjecture \ref{conj:topological}. Then   every saddle periodic point $p$ for $f_1$ 
is uniformly transverse regular. 
\end{prop}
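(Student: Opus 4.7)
The statement requires three things: uniform u-regularity at $p$, uniform s-regularity at $p$, and transversality of $\cW^u_\loc(p)$ and $\cW^s_\loc(p)$. Transversality is immediate at a saddle, since $T_p W^u(p)$ and $T_p W^s(p)$ are eigenspaces of $df_1^k$ for eigenvalues of distinct modulus. Moreover s-regularity is symmetric to u-regularity under $f_1 \leftrightarrow f_1^{-1}$, so the plan focuses on uniform u-regularity.

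Given an arbitrary sequence $(p_n)$ of saddle periodic points of $f_1$ converging to $p$, I will set $q_n = \phi^{-1}(p_n)$ and $q = \phi^{-1}(p)$. By Proposition \ref{prop:periodic} and hyperbolicity of $f_0$, these are saddles of $f_0$ lying in $J_0$ with $q_n \to q$, and hyperbolicity of $f_0$ furnishes a uniform $r_0>0$ for which $W^u_{r_0}(q_n)$ is a holomorphic disk of size $r_0$ at $q_n$ converging in $C^1$ to $W^u_{r_0}(q)$. The unstable analog of inclusion \eqref{eq:inclusion} yields $W^u_{r_1}(p_n) \subset \phi(W^u_{r_0}(q_n))$, and uniform continuity of $\phi$ on the compact set $\overline{\bigcup_n W^u_{r_0}(q_n)}$ gives Hausdorff convergence $\phi(W^u_{r_0}(q_n)) \to \phi(W^u_{r_0}(q)) \subset W^u(p)$. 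I then apply Bishop's compactness theorem in a slightly smaller ball $B(p, r_1')$ with $r_1' < r_1$: after passing to a subsequence, the analytic subvarieties $W^u_{r_1}(p_n) \cap \overline{B(p, r_1')}$ converge to a limit $V_\infty$, which by the Hausdorff containment satisfies $V_\infty \subset W^u(p) \cap \overline{B(p, r_1')}$. Each $W^u_{r_1}(p_n)$ has Lelong number $1$ at $p_n$, so lower semicontinuity of Lelong numbers produces a one-dimensional irreducible component of $V_\infty$ through $p$; as an analytic subset of the smooth properly embedded submanifold $W^u_{r_1'}(p)$ guaranteed by Proposition \ref{prop:uniform}, this component must coincide with $W^u_{r_1'}(p)$ itself. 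In particular $W^u_{r_1'}(p)$ has a well-defined positive size $\rho = \rho(p) > 0$ at $p$.

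To finish I invoke the tubular neighborhood Lemma \ref{lem:tub}: in a bidisk adapted to $T_p W^u(p)$ of size $\rho$, the local branch of $W^u_{r_1}(p_n)$ through $p_n$ is Hausdorff-close to $W^u_{r_1'}(p)$ for $n$ large, so the lemma forces it to be a graph of bounded slope, of size at least $\rho/2$ at $p_n$ (using also that $T_{p_n} W^u(p_n) \to T_p W^u(p)$). This gives uniform u-regularity at $p$, and the same argument applied to $f_1^{-1}$ gives uniform s-regularity. The main obstacle is the identification step in the Bishop-limit paragraph: one must combine the containment in the purely topological set $\phi(W^u_{r_0}(q))$ with the Lelong-number lower bound at $p$ in order to pin down the component of $V_\infty$ through $p$ as exactly $W^u_{r_1'}(p)$, ruling out both collapse to a point and the appearance of extraneous analytic components.
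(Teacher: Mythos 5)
Your reduction to uniform u-regularity and the use of the inclusion \eqref{eq:inclusion} to obtain Hausdorff closeness of $W^u_{r_1}(p_n)$ to $W^u(p)$ are in the same spirit as the paper, but there are two substantive problems.

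First, the Bishop-compactness step is unjustified: Bishop's theorem requires a uniform bound on the area of the varieties $W^u_{r_1}(p_n)\cap \overline{B(p,r_1')}$, and you have not established one. Such an area bound is essentially equivalent to bounding the degree of the branched cover, which is precisely the non-trivial content of the proposition; so this step is circular as written. It is also superfluous for the purpose you put it to, namely concluding that $W^u_{r_1'}(p)$ has a definite size $\rho>0$ at $p$: that already follows from Proposition~\ref{prop:uniform}, since a properly embedded submanifold through $p$ is locally a graph of small slope over its tangent plane. (Incidentally, Lelong numbers are \emph{upper} semicontinuous, not lower.)

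Second, and this is the real gap, the tubular neighborhood Lemma~\ref{lem:tub} only gives that $W^u_{r_1}(p_n)$ is a \emph{branched cover} over $W^u_{r_1'}(p)$ inside the tube, not a graph. You assert without argument that "the lemma forces it to be a graph," and speak of "the local branch through $p_n$" as though it were a global sheet; neither follows from Hausdorff proximity alone. The degree of this cover must be shown to equal $1$, and that is where the conjugacy is actually used in an essential way. The paper does this by counting intersections with the complementary local manifold $W^u_{r_1}(p)$ (resp.~$W^s_{r_1}(p)$): the product structure for $f_0$ near $\jstar_0$ gives a single intersection point on the $f_0$ side, and the topological invariance of the local intersection multiplicity of two smooth complex curves (linking number of their traces on a small sphere) transports the conclusion "single transverse intersection" through $\phi$, pinning the covering degree to $1$. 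Without this intersection-theoretic argument your proof does not establish the uniform size bound, and the claim that $T_{p_n}W^u(p_n)\to T_pW^u(p)$ is likewise only available a posteriori, once one knows the covers are graphs.

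Finally, your opening remark that transversality at a saddle is automatic is correct and matches the paper, and your observation that s-regularity follows from the same argument applied to $f^{-1}$ is also fine.
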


\begin{proof}
Let $p$ be as in the statement of the proposition, and assume that $W^s(p)$ has size $r$ at $p$. 
Reducing $r$ if necessary, we may assume that $\phi\inv(W_r^s(p))$ is contained in a flow box of the stable lamination 
of $f_0$. 
Without loss of generality we may also assume that $r<r_1/4$, where $r_1$ is as in Proposition \ref{prop:uniform}.  We will show that 
there exists a neighborhood $V$ of $p$ such that if $q\in V$ is another periodic point, then $W^s_r(q)$ is a graph over 
$W^s(p)$ in  $B(0, r/2)$.  
This implies that $p$ is uniformly s-regular. Uniform u-regularity is proven in the same way, and the transversality property is 
obvious since $p$ is a saddle. 

We know that for any saddle point $q\in J_1^\varstar$, the  stable manifold $W^s_{r_1}(q)$ is properly embedded in $B(q, r_1)$. In 
addition, by \eqref{eq:inclusion} it is contained in $\phi(W^s_{r_0} (\phi\inv(q)))$. 
By the  uniform continuity of $\phi$ there exists a neighborhood $V$ 
of $p$ such that for every saddle point $q\in V$, $W^s_{r}(q)$ is $\eta$-close to $W^s_{r}(p)$ in $B(p, 2r)$, where $\eta$ is as in the 
tubular neighborhood lemma. Thus $W^s_{r}(q)$ is a branched cover over $W^s_{r}(p)$ in $B(p, r/2)$, and to conclude the 
proof it remains to show that this cover has degree 1. 
By the product structure of $f_0$ in $(\jstar_0)_{2r_0}$ we
have that $W^s_{r_0}(\phi\inv(p))\cap W^u_{r_0}(\phi\inv(p))  = \set{\phi\inv(p)}$ hence 
$W^s_{r_1}(p)\cap  W^u_{r_1}(p) = \set{p}$. Thus,
reducing $\eta$ if necessary, to compute the degree of this branched cover it is enough 
to count the number of intersection points, with multiplicity,  between $W^s_{r}(q)$ and $W^u_{r_1}(p)$. 
Applying  the product structure again we get that $W^s_{r}(q)\cap W^u_{r_1}(p)$ is a single point. Furthermore it is well-known that 
 the order of contact between two smooth complex curves in $\cd$ is a topological invariant. Indeed if we consider 
 two smooth curves $C$ and $D$ with an isolated intersection at $0\in \cd$ 
and intersect them with a small sphere $\mathbb S$ about 0, then $C\cap \mathbb S$ winds $n$ times about 
$D\cap \mathbb S$ where $n$ is the intersection multiplicity. 
 So  we conclude  that the  intersection $W^s_{r}(q)\cap W^u_{r_1}(p)$ 
 is transverse and we are done.
\end{proof}
 
 \subsection{Proof of  Conjecture \ref{conj:topological} in the dissipative case.}

\begin{thm}\label{thm:topological dissipative}
Let $f_0$ and $f_1$ be two polynomial automorphisms of $\cd$ with non-trivial dynamics, and assume that $f_0$ is hyperbolic and 
that $f_1$ is dissipative.

 Suppose that there exists  respective  neighborhoods $N_0$ and $N_1$ 
 of $J_0 = \jstar_0$ and $\jstar_1$ and a homeomorphism $\phi: N_0\to N_1$ such that $\phi\circ f_0 = f_1\circ \phi$ where 
these  compositions makes sense. Then $f_1$ is hyperbolic. 
 \end{thm}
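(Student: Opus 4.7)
The plan is to combine the results already established in this section with the geometric hyperbolicity criterion Theorem \ref{thm:us regularity}. Specifically, I would show that every point of $\jstar_1$ is uniformly u- and s-regular, and then invoke Theorem \ref{thm:us regularity} (which applies because $f_1$ is dissipative, so $\abs{\jac(f_1)} < 1$) to conclude that $\jstar_1$ is a hyperbolic set. Theorem \ref{thm:saddle} then promotes this to hyperbolicity of $f_1$ on $J_1$.

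First I would use Proposition \ref{prop:periodic} to get $\phi(J_0) = \jstar_1$ and to ensure that all periodic points of $f_1$ on $\jstar_1$ are saddles (the dissipative case of that proposition requires no regularity of $\phi$). Proposition \ref{prop:uniform} then provides a uniform $r_1$ such that for every such saddle $p$ the local stable and unstable manifolds $W^s_{r_1}(p)$ and $W^u_{r_1}(p)$ are properly embedded holomorphic submanifolds of $B(p, r_1)$. Next, Proposition \ref{prop:saddle regular} upgrades this to uniform transverse regularity at each individual saddle periodic point $p$, providing (a priori non-uniform) constants $r(p) > 0$ and neighborhoods $V(p)$ of $p$ such that for every saddle $q \in V(p)$, the disks $W^{s/u}_{r(p)}(q)$ are graphs over $W^{s/u}(p)$ in $B(p, r(p)/2)$.

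The key remaining step is a compactness argument to promote these local bounds to a truly uniform one. Since $\jstar_1$ is compact and saddle periodic points are dense in $\jstar_1$, finitely many neighborhoods $V(p_1),\dots,V(p_N)$ associated to saddles cover $\jstar_1$. Setting $r := \tfrac{1}{2}\min_i r(p_i)$ (possibly slightly smaller, to absorb the distortion from nearby tangent spaces), every saddle periodic point $q \in \jstar_1$ lies in some $V(p_i)$ and therefore has $W^s(q)$ and $W^u(q)$ of size $\geq r$ at $q$ in the intrinsic sense of Section \ref{subs:size}. Indeed for $q$ close to $p_i$, the tangent spaces $T_q W^{s/u}(q)$ are close to $T_{p_i} W^{s/u}(p_i)$, so being a graph over $W^{s/u}(p_i)$ is equivalent (with a slight loss in radius) to being a graph over $T_q W^{s/u}(q)$. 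The implication (3) $\Rightarrow$ (1) of Proposition \ref{prop:lamination}, applied in both the unstable and stable directions, then yields that every point of $\jstar_1$ is uniformly u- and s-regular.

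The main obstacle is modest given the machinery already in place: it is precisely the conversion from the local transverse regularity at each saddle (with its own radius $r(p)$ and neighborhood $V(p)$) to genuinely uniform geometric control across $\jstar_1$, together with reconciling the ``graph over the reference stable manifold $W^s(p_i)$'' formulation with the intrinsic ``graph over the tangent space'' definition of size. Once this bookkeeping is in hand, Theorem \ref{thm:us regularity} delivers uniform hyperbolicity on $\jstar_1$, and Theorem \ref{thm:saddle} finishes the argument.
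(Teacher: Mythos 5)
Your proposal correctly assembles the preliminary steps (Propositions \ref{prop:periodic}, \ref{prop:uniform}, \ref{prop:saddle regular}) and correctly identifies which geometric criterion should close the argument. But the crucial compactness step is a genuine gap, and it is precisely the hard part of the theorem.

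The issue is the claim that ``finitely many neighborhoods $V(p_1),\dots,V(p_N)$ associated to saddles cover $\jstar_1$.'' Proposition \ref{prop:saddle regular} gives, for each saddle $p$, a radius $r(p)$ and a neighborhood $V(p)$ of $p$ in which nearby saddle local stable/unstable manifolds are graphs over $W^{s/u}(p)$; but both $r(p)$ and $V(p)$ depend on $p$ in an a priori \emph{uncontrolled} way (they come from the invariance-of-domain and pullback argument in Proposition \ref{prop:uniform}, where the iterate $n(p)$ needed to shrink into a good chart is not uniform). The family $\{V(p)\}_{p \text{ saddle}}$ is an open cover of the set of saddle points, which is dense in $\jstar_1$, but density of a set does not imply that open neighborhoods of its points cover the closure: there could be a non-periodic $x\in\jstar_1$ with saddles $p_n\to x$ and $V(p_n)$ shrinking so fast that $x\notin\bigcup_n V(p_n)$. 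So the extraction of a finite subcover is unjustified, and without it the minimum $\tfrac12\min_i r(p_i)$ means nothing.

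The paper's proof is structured precisely around this obstruction. It defines the open invariant set $\Omega\subset\jstar_1$ of points near which the transported laminations $\el_1^{s/u}$ are holomorphic of uniform size, and Proposition \ref{prop:saddle regular} only shows that $\Omega$ contains all saddle points --- not that $\Omega=\jstar_1$. The missing ingredient, Lemma \ref{lem:tube Pesin}, shows that every invariant measure on $\jstar_1$ gives full mass to $\Omega$; the proof runs the tube argument at $\nu$-a.e.\ point $x$ using the Pesin stable manifold of $x$ (available because dissipation forces a negative exponent) as the reference disk in place of the stable manifold of a saddle. Then the complement $\jstar_1\setminus\Omega$, being closed and invariant, would have to carry an invariant measure if nonempty, contradicting the lemma. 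You need some argument of this kind --- your compactness shortcut does not substitute for it.

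A small secondary remark: once $\Omega=\jstar_1$ is known, the points are uniformly \emph{transverse} regular (transversality is part of the tube argument), so the paper invokes Theorem \ref{thm:bs8_revisited}; your appeal to Theorem \ref{thm:us regularity} would also work but is not where the difficulty lies.
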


To prove the theorem, let $\cW^s_0$ and $\cW^u_0$ be the   stable and unstable laminations in  $N_0$ and 
$\el^s_1$ and $\el^u_1$ be their respective images under
 $\phi$. At this stage $\el^s_1$ and $\el^u_1$  are topological laminations by 
topological disks in $N_1$.  

Define $\Omega$ to be the set of points $x\in \jstar_1$ such that there exists a neighborhood $V$ of $x$ in $\jstar_1$ 
such that for every $y
\in V$, $\el_1^s(y)$ and $\el_1^u(y)$ are holomorphic and of uniform size in $V$. Note that they must be transverse 
by  the  topological invariance of the order of  contact between smooth curves. 
 By construction, $\Omega$ is 
open in $\jstar_1$ and completely invariant (i.e. $f(\Omega) = \Omega$). 
Proposition \ref{prop:saddle regular} shows that $\Omega$ contains all saddle points. 

The main step of the proof is the following:

\begin{lem} \label{lem:tube Pesin}
Let $f_0$ and $f_1$ be as in Theorem \ref{thm:topological dissipative}. Then    any invariant   
measure supported on 
$\jstar_1$ gives full mass to $\Omega$.  
\end{lem}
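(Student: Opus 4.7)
The plan is to reduce to ergodic measures $\nu$ on $\jstar_1$ and combine Pesin theory with the uniformity of Proposition \ref{prop:saddle regular}. The atomic case is immediate: such a $\nu$ is supported on a saddle orbit by Proposition \ref{prop:periodic}, and every saddle already lies in $\Omega$ because the neighborhood $V$ from Proposition \ref{prop:saddle regular} contains a dense set of saddles with holomorphic $W^{s/u}$ of uniform size, whose $C^1$-limits along saddle sequences $q_n \to y$ identify $\el^{s/u}_1(y)$ locally with a holomorphic disk of uniform size for every $y \in V \cap \jstar_1$ (using the continuity of the topological lamination under $\phi$). So we may assume $\nu$ is non-atomic.

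Dissipativity yields $\chi^+_\nu + \chi^-_\nu = \log|\jac f_1| < 0$, in particular $\chi^-_\nu < 0$. The case $\chi^+_\nu < 0$ is excluded by a standard argument: Pesin theory would then force $\nu$ to be concentrated on a periodic attractor, contradicting the fact that every periodic point in $\jstar_1$ is a saddle (Proposition \ref{prop:periodic}). I would focus first on the principal case $\chi^+_\nu > 0$, in which $\nu$ is hyperbolic and Pesin theory supplies, at $\nu$-a.e.\ $x$, holomorphic Pesin manifolds $W^{s/u}_{\mathrm{Pesin}}(x)$ whose size is uniformly bounded below on each Pesin block $\Lambda_k$ of positive $\nu$-measure.

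The key tool is then Katok's closing lemma, which at a $\nu$-generic $x \in \Lambda_k$ produces a sequence of saddles $p_n \to x$ whose hyperbolicity constants are controlled by those on $\Lambda_k$; in particular, the sizes of $W^{s/u}(p_n)$ at $p_n$ are bounded below uniformly in $n$ by some $r = r(\Lambda_k) > 0$. A close reading of the proof of Proposition \ref{prop:saddle regular} reveals that the ambient radius $\delta$ of the neighborhood on which $V \cap \jstar_1 \subset \Omega$ is a function only of $r$, of the uniform continuity modulus of $\phi$, and of the tubular radius in Lemma \ref{lem:tub}; it is therefore uniform across the $p_n$'s. Applying this to each $p_n$ yields $x \in B(p_n, \delta) \cap \jstar_1 \subset \Omega$ for $n$ large, so $\nu(\Omega) = 1$ in this case.

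The borderline case $\chi^+_\nu = 0$ is, I expect, the main obstacle. Katok's closing lemma no longer yields uniformly hyperbolic saddle approximations in the unstable direction, so uniform u-regularity at a $\nu$-generic $x$ must be established by a different route: either by ruling out this case altogether (for instance via Ruelle's inequality combined with the positivity of the topological entropy inherited from $f_0$ through $\phi$, forcing $h_\nu = 0$ and hence $\nu$ to lie on a small set), or by a direct $C^1$-limiting argument using only the density of saddles in $\jstar_1$, Proposition \ref{prop:saddle regular}, and Lemma \ref{lem:tub} to graph nearby $\el^u_1$-leaves over a limit holomorphic disk with size bounded below uniformly in the approximating saddles.
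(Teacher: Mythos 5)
Your proposal correctly identifies the main ingredients (Pesin theory, tubular neighborhoods, Propositions \ref{prop:uniform} and \ref{prop:saddle regular}), but the case analysis on $\chi^+$ is both unnecessary and, in its borderline case, genuinely incomplete. You flag $\chi^+_\nu = 0$ yourself as ``the main obstacle,'' and neither of your proposed escape routes closes it: Ruelle's inequality only forces $h_\nu = 0$, which does not confine $\nu$ to a negligible set (plenty of nontrivial invariant measures have zero entropy), and the ``direct $C^1$-limiting'' sketch is precisely the missing step --- one must explain why the approximating saddles have \emph{unstable} manifolds of size bounded below, which is exactly what fails to follow from Katok's closing lemma once the unstable exponent degenerates.

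The paper sidesteps this entirely by observing that only the \emph{stable} Pesin manifold of $f_1$ is needed. Dissipativity plus the fact that $\nu$ is not concentrated on a periodic orbit give $\chi_2 \geq 0$ and hence $\chi_1 < 0$ a.e., which alone triggers the Pesin stable manifold theorem at $\nu$-a.e. $x$. One then identifies $W^s_\loc(x)$ with $\el^s_1(x)$ near $x$ via the topological characterization of stable sets for the hyperbolic $f_0$ together with invariance of domain, and builds $\mathrm{Tub}_\eta$ around this already-holomorphic disk. Nearby saddle stable manifolds are horizontal in the tube exactly as in Proposition \ref{prop:saddle regular}. The replacement for your Katok-closing-lemma step is this: the nearby saddle unstable manifolds $W^u_{r_1}(q)$, which are properly embedded holomorphic disks of the uniform radius $r_1$ by Proposition \ref{prop:uniform}, are forced to be \emph{vertical} in $\mathrm{Tub}_\eta$ because the transversality of $\cW^s_0$ and $\cW^u_0$ near $\phi\inv(x)$ pushes forward under $\phi$ to a uniform separation between $\el^u_1$-leaves and $\fr\Delta^s_r$. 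A single transverse intersection point (topological invariance of the contact order) then forces both covers to have degree one, so they are graphs of uniformly bounded geometry by Schwarz, and $x \in \Omega$. No assumption on $\chi^+$ enters, no Pesin block uniformity is invoked, and the borderline case simply does not arise. You should reorganize your proof to use only the negative exponent; the $\chi^+ > 0$ machinery you invoke is superfluous and imports the very gap you then cannot fill.
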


Theorem \ref{thm:topological dissipative} follows easily. Indeed, if non-empty,
 the complement of $\Omega$ in $\jstar_1$
 is a closed invariant set hence if it is non-empty it supports an   invariant measure $\nu$. By the lemma, 
 $\nu(\Omega) = 1$ hence the contradiction. Therefore we conclude  that $\Omega   = \jstar_1$, in particular all points in 
 $\jstar_1$ are uniformly   regular, and the result follows from Theorem \ref{thm:bs8_revisited}.  \qed

\begin{proof}[Proof of Lemma   \ref{lem:tube Pesin}]
The method is to adapt the  ``tube argument'' to Pesin stable manifolds. 
Under the assumptions of the theorem, 
let $\nu$ be any invariant measure for $f_1$ supported on $\jstar_1$. Then by Oseledets' Theorem 
for $\nu$-a.e. $x$ there exist Lyapunov exponents $\chi_1(x) \leq \chi_2(x)$ satisfying 
$\chi_1(x)+ \chi_2(x) = \log \abs{\jac(f)}$. In addition since $\nu$ is not concentrated on a periodic orbit we 
have $\chi_2(x)\geq 0$ a.e. hence   $\chi_1(x)<0$ since $\abs{\jac(f)}<1$.
 By the Pesin stable manifold theorem, for $\nu$-a.e. $x$, there 
exists a local stable manifold $W^s_\loc (x)$ which can characterized as the set of points $y$ sufficiently close to $x$ 
such that $\limsup\unsur{n} \log \dist(f_1^n(y), f_1^n(x))<0$. Pick any point $x$ such that $W^s_\loc (x)$ 
exists. We will show that {\em both} $\el_1^s$ and $\el_1^u$ are laminations by Riemann surfaces near $x$. 

Observe first that by hyperbolicity of $f_0$,
 the local stable manifold of $\phi\inv(x)$ is the set of points $z$ near 
$\phi\inv(x)$ such that $\dist(f_0^n(z), f_0^n(\phi\inv(x)))\to 0$ as $n\to +\infty$.
 Hence $\phi\inv(W^s_\loc(x))\subset W^s_\loc(\phi\inv(x))$. 
Since $\phi\inv$ is continuous and injective, by the invariance of domain theorem, $\phi\inv(W^s_\loc(x))$ is 
neighborhood of $\phi\inv(x)$ in $W^s_\loc(\phi\inv(x))$. Thus $W^s_\loc(x)$ coincides with $\el_1^s(x)$ in a neighborhood 
of $x$.

 Let $r$ be so small that $W^s_\loc(x)$ has size $r$ at $x$ and $W^s_\loc(x)=\el_1^s(x)$ in $B(x, 2r)$. 
Then after a unitary change of 
coordinates as in Lemma \ref{lem:tub}, $W^s_\loc(x)$ is a graph of the form $y = \psi(x)$ over $D(0,r)$. Denote this graph by 
$\Delta^s_r$. For small 
$\eta>0$, we define  $$\mathrm{Tub}_\eta =\mathrm{Tub}_\eta( \Delta^s_r)
= \set{(x, y), \ \abs{x}<r, \ \abs{y-\psi(x)}<\eta}.$$  We  say that a submanifold $M$ of
$\mathrm{Tub_\eta}$  (which extends to some 
 neighborhood of  $\overline  {\mathrm{Tub}_\eta}$) is {\em horizontal} if 
$$M\cap \fr \mathrm{Tub}_\eta\subset \set{(x,y)\in\overline{\mathrm{Tub}_\eta}, \  \abs{x}=r}$$ and similarly it is 
{\em vertical}  if $M\cap \fr \mathrm{Tub}_\eta \cap\set{\abs{x}=r} = \emptyset$. As already observed, if $M$ is horizontal it is a branched covering over the first coordinate, and similarly if it is vertical the restriction of $(x,y)
\mapsto y - \psi(x)$  to $M\cap \mathrm{Tub}_\eta$ is a branched covering over $D(0, \eta)$.

 Exactly as in Proposition  \ref{prop:saddle regular}, if $q\in \jstar$ 
is a  saddle point sufficiently close to $x$, $W^s_\loc(q)$ 
is horizontal in $\mathrm{Tub}_\eta$. Now by the transversality of $\cW_0^s$ and 
$\cW_0^u$, there exists a neighborhood $N$  of $\phi\inv(x)$ such that for any $z\in N$,  the distance between 
$\cW^u_0(z)$ and $\phi\inv(\fr \Delta^s_r)$ is bounded from below by a uniform positive constant. 
By continuity of $\phi$, for any $y = \phi(z) \in \phi(N)$ and 
reducing $\eta$ if necessary we get that  $\dist(\fr \Delta^s_r, \el_1^u(y))> 2\eta$. By Proposition \ref{prop:uniform}, if 
$q$ is a saddle  point close to $x$,  $W^u_{r_1}(q) = \el_1^u(q)\cap B(q, r_1)$ 
is a submanifold in $B(q, r_1)$ for a uniform $r_1$ (which we may assume to be large with respect to $r$ and $\eta$). 
So we conclude that it is a submanifold in a neighborhood of 
$\overline  {\mathrm{Tub}_\eta}$, which must be vertical in $\mathrm{Tub}_\eta$.
Thus we have shown that if $q$ is a saddle periodic point sufficiently close to $x$,  the local stable and unstable manifolds of 
$q$ are respectively horizontal and vertical in $\mathrm{Tub}_{\eta}$, with a single transverse intersection point (for transversality 
again we use the topological invariance of the order of contact). Hence both have covering degree 1 respectively over the horizontal and vertical directions in $\mathrm{Tub}_{\eta}$, i.e. they are graphs. 
 Then by the Schwarz Lemma they have uniformly bounded geometry. So we conclude that    $x$ belongs to $\Omega$, 
 and  the proof is complete.  
\end{proof}

 \subsection{The conservative case} 
 
 In the   conservative  
 case we can only prove Conjecture \ref{conj:topological}  in the case of a  Hölder conjugacy. 

\begin{thm}\label{thm:topological conservative}
Let $f_0$ and $f_1$ be two polynomial automorphisms of $\cd$ with non-trivial dynamics, and assume that $f_0$ is hyperbolic and 
$f_1$ is conservative.

 Suppose that there exists  respective  neighborhoods $N_0$ and $N_1$ 
 of $J_0 = \jstar_0$ and $\jstar_1$ and a Hölder continuous 
  homeomorphism $\phi: N_0\to N_1$ such that $\phi\circ f_0 = f_1\circ \phi$ where 
these  compositions makes sense. Then $f_1$ is hyperbolic. 
 \end{thm}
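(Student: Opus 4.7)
The plan is to follow the structure of the dissipative case proof (Theorem~\ref{thm:topological dissipative}) closely, but to replace its Pesin-theoretic tube argument (Lemma~\ref{lem:tube Pesin}) by a more direct ``compactness plus identification'' argument at the level of laminations. The Hölder hypothesis enters only through Proposition~\ref{prop:periodic}, which now lets me assert that every periodic point in $\jstar_1$ is a saddle. With this in hand, Proposition~\ref{prop:saddle regular} applies verbatim (its proof never uses dissipativity) and yields a uniform $r>0$ such that for every saddle periodic point $p$ of $f_1$, both $W^s(p)$ and $W^u(p)$ have size $r$ at $p$.

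Since saddle periodic points are dense in $\jstar_1$, Proposition~\ref{prop:lamination} applied in both the $u$- and $s$-directions upgrades this to uniform $u$- and $s$-regularity at every $x\in\jstar_1$. I therefore obtain at each such $x$ a pair of smooth holomorphic disks $\cW^u_\loc(x)$ and $\cW^s_\loc(x)$ of uniform size, and the only input missing before I can apply Theorem~\ref{thm:uniform regularity} is the regularity condition $\cW^s_\loc(x)\neq \cW^u_\loc(x)$.

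For this I would use the topological laminations $\el^{u/s}_1 := \phi(\cW^{u/s}_0)$ obtained by pushing the hyperbolic laminations of $f_0$ through $\phi$, and establish the local identification $\cW^{u/s}_\loc(x)=\el^{u/s}_1(x)$ near each $x\in\jstar_1$. Both objects arise as limits of the holomorphic disks $W^{u/s}_\loc(p_n)$ for a sequence of saddles $p_n\to x$: the first as the $C^1$-limit of Proposition~\ref{prop:lamination}, the second as the Hausdorff limit (by continuity of $\phi$ and of the stable/unstable laminations of $f_0$ on $J_0$). Uniform continuity of $\phi^{-1}$ on compact sets provides a $\delta>0$ such that for any saddle periodic point $p$ of $f_1$, both $W^{s/u}_\loc(p)$ and $\el^{s/u}_1(p)=\phi(W^{s/u}_\loc(\phi^{-1}(p)))$ coincide in $B(p,\delta)$ with the connected component of $W^{s/u}(p)\cap B(p,\delta)$ through $p$; letting $p_n\to x$ then forces $\cW^{u/s}_\loc(x)=\el^{u/s}_1(x)$ in $B(x,\delta)$. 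Since $\phi$ is a homeomorphism and $W^s_\loc(\phi^{-1}(x))\neq W^u_\loc(\phi^{-1}(x))$ at every $\phi^{-1}(x)\in J_0$, the topological leaves $\el^s_1(x)$ and $\el^u_1(x)$ are distinct, whence $\cW^s_\loc(x)\neq \cW^u_\loc(x)$. Theorem~\ref{thm:uniform regularity} then gives that $\jstar_1$ is a hyperbolic set, and Theorem~\ref{thm:saddle} concludes that $f_1$ is hyperbolic.

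The main obstacle lies entirely in the first step, that is, the conservative analogue of Proposition~\ref{prop:periodic}. In the dissipative regime one excluded neutral periodic points via hedgehog theory; in the conservative setting that route is unavailable, and it is precisely the Hölder hypothesis that lets one contrast the exponential convergence on $\phi(W^s_\loc(\phi^{-1}(p)))$ imposed by the hyperbolicity of $f_0$ with the strictly sub-exponential local dynamics produced by a neutral linear part. Removing the Hölder assumption at this single step is exactly what keeps the general conservative case of Conjecture~\ref{conj:topological} open.
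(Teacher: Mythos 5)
Your proposal parts ways from the paper at the crucial step, and it has a genuine gap there. Proposition~\ref{prop:saddle regular} does \emph{not} produce a uniform $r>0$ valid for all saddle periodic points of $f_1$. Its proof starts ``let $p$ be a saddle point and suppose $W^s(p)$ has size $r$ at $p$'': both $r$ and the output neighborhood $V$ depend on $p$, and nothing in the tube argument makes them uniform as $p$ ranges over all saddles. What the proposition asserts is pointwise uniform regularity \emph{at each saddle separately}, i.e.\ that each saddle lies in the open invariant set $\Omega$ (as defined in the proof of Theorem~\ref{thm:topological dissipative}). To invoke Proposition~\ref{prop:lamination}~(3) you need a single $r$ working on a dense set of saddles, which is exactly what is not yet available. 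Your sentence ``Since saddle periodic points are dense in $\jstar_1$, Proposition~\ref{prop:lamination} applied in both the $u$- and $s$-directions upgrades this to uniform $u$- and $s$-regularity at every $x\in\jstar_1$'' therefore asserts precisely the thing that still needs to be proved.

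This missing uniformity is what the paper's tube/Pesin argument is for: one proves that $\Omega$ has full mass for every invariant measure (Lemma~\ref{lem:tube Pesin}), so the closed invariant complement $\jstar_1\setminus\Omega$ must be empty, and only \emph{then} does one get a global uniform $r$ and may apply Theorem~\ref{thm:bs8_revisited}. The paper keeps this structure in the conservative case; the only change is that dissipativity can no longer be used to show that the relevant invariant measures have a negative Lyapunov exponent, so a separate lemma (using Oseledets--Pesin reduction and regular neighborhoods) shows that all $f_1$-invariant measures on $\jstar_1$ are hyperbolic, with the Hölder hypothesis entering to contradict zero exponents. You correctly identified that the Hölder hypothesis is used in Proposition~\ref{prop:periodic} to exclude neutral periodic points, but you missed that it is used a second time, to exclude non-hyperbolic invariant measures; that second use is exactly what powers the step your proposal tries to bypass.
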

 
 The proof  is identical to that of Theorem \ref{thm:topological conservative}, the only difference is that in Lemma 
 \ref{lem:tube Pesin} we need a different argument to show that any ergodic invariant measure $\nu$ for $f_0$ admits a negative 
 Lyapunov exponent (this issue already appeared in the proof of Proposition \ref{prop:periodic}).  
 So Theorem \ref{thm:topological conservative} follows from:
 
 \begin{lem}
 Let $f_0$ and $f_1$ be as in Theorem \ref{thm:topological conservative}. 
 Then all  measures invariant under  $f_1$ are hyperbolic. 
 \end{lem}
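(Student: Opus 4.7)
The plan is to argue by contradiction. By Proposition~\ref{prop:periodic} the Hölder hypothesis already forces every periodic point of $f_1$ on $\jstar_1$ to be a saddle, so the atomic invariant measures are automatically hyperbolic; hence we may assume $\nu$ is ergodic and non-atomic. Since $\abs{\jac(f_1)}=1$ the two Lyapunov exponents of $\nu$ satisfy $\chi_1(\nu)+\chi_2(\nu)=0$, so non-hyperbolicity is equivalent to $\chi_1(\nu)=\chi_2(\nu)=0$. This is the hypothesis we will rule out.

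Let $\mu=\phi^{-1}_*\nu$, which is an ergodic $f_0$-invariant measure on $J_0$. Because $f_0$ is uniformly hyperbolic on $J_0$, $\mu$-a.e.\ point $x$ carries a local unstable manifold $W^u_r(x)$ of uniform size $r>0$ satisfying
\[
\abs{f_0^{-n}(x)-f_0^{-n}(x')}\leq C\lambda^{-n}\abs{x-x'}\qquad (x'\in W^u_r(x),\ n\geq 0),
\]
where $\lambda>1$ is the uniform expansion constant. Let $\alpha\in(0,1]$ denote the Hölder exponent of $\phi$. Setting $y=\phi(x)$, $y'=\phi(x')$ and using the intertwining relation,
\[
\abs{f_1^{-n}(y)-f_1^{-n}(y')}=\abs{\phi(f_0^{-n}(x))-\phi(f_0^{-n}(x'))}\leq C'\lambda^{-n\alpha}\abs{x-x'}^\alpha.
\]
So backward $f_1$-orbits emanating from the topological disk $\phi(W^u_r(x))$ converge to that of $y$ at the exponential rate $\lambda^{-\alpha}<1$.

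On the other hand, the zero-exponent assumption combined with $\abs{\det Df_1}=1$ furnishes a matching Pesin-type lower bound. Indeed, by Oseledets' theorem and a standard Lusin-Egorov truncation, for every $\epsilon>0$ there exist a set $\Lambda_\epsilon\subset \jstar_1$ of positive $\nu$-measure and a radius $r_\epsilon>0$ such that for every $y\in\Lambda_\epsilon$ and $y'\in B(y,r_\epsilon)$,
\[
\abs{f_1^{-n}(y)-f_1^{-n}(y')}\geq c_\epsilon e^{-\epsilon n}\abs{y-y'}\qquad(n\geq 0).
\]
The point is that conservativity forces the smallest singular value of $Df_1^n$ along a $\nu$-typical orbit to be at least $e^{-\epsilon n}$, and on a Pesin chart this translates into the displayed orbit-separation bound. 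Choosing $\epsilon<\alpha\log\lambda$, picking $y\in\Lambda_\epsilon$ with $x=\phi^{-1}(y)$ in the full $\mu$-measure set where $W^u_r(x)$ exists, and taking $x'\in W^u_r(x)\setminus\{x\}$ so close to $x$ that $\phi(x')\in B(y,r_\epsilon)$, the two estimates combine into
\[
c_\epsilon e^{-\epsilon n}\abs{y-y'}\leq C'\lambda^{-n\alpha}\abs{x-x'}^\alpha,
\]
that is, $\abs{y-y'}\leq (C'/c_\epsilon)\,e^{n(\epsilon-\alpha\log\lambda)}\abs{x-x'}^\alpha\to 0$ as $n\to\infty$. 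Since $\phi$ is injective, $y\neq y'$, and this contradiction shows that $\chi_2(\nu)>0$, as desired.

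The main technical obstacle is establishing the uniform Pesin lower bound for a zero-exponent (rather than hyperbolic) measure: the classical Pesin construction is designed for cocycles with nonzero exponents, whereas here $Df_1^n$ is tempered but with vanishing exponents. One has to replace the Pesin chart construction by its tempered-cocycle variant, exploiting Lusin's theorem applied to the Oseledets block decomposition together with the uniform holomorphic regularity of $f_1$ to control derivatives not only at Pesin-regular points but on entire neighborhoods uniformly in time. The remainder of the argument is a direct exponent comparison as above, in the spirit of the neutral-fixed-point argument already used in Proposition~\ref{prop:periodic}.
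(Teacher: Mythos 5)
Your proof takes essentially the same route as the paper's: assume both exponents vanish, invoke the Oseledets--Pesin reduction to get a tempered cocycle with distortion at most $e^{\pm\e}$ and tempered regular neighborhoods, transfer the exponential (forward in the paper, backward in yours) orbit-convergence from $f_0$ through the H\"older conjugacy $\phi$ to get a rate $\alpha>0$ for $f_1$, and compare the two rates to reach a contradiction when $\e \ll \alpha$. Using $W^u$ and backward iterates instead of $W^s$ and forward iterates is a harmless symmetric choice.

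One point needs care. The displayed Pesin-type lower bound you posit,
\[
\abs{f_1^{-n}(y)-f_1^{-n}(y')}\geq c_\e e^{-\e n}\abs{y-y'}\quad\text{for all }y'\in B(y,r_\e)\text{ and all }n\geq 0,
\]
is too strong as stated: the regular (Lyapunov) neighborhoods have radii $q(f_1^{-n}(y))$ shrinking like $e^{-\e n}$, while inside a chart the separation can also \emph{grow} by a factor $e^{2\e}$ per step, so for an arbitrary $y'$ the backward orbit will leave the charts after finitely many steps and the estimate fails. The paper avoids this by first using the H\"older transfer to establish that $\dist(f_1^{-n}(y),f_1^{-n}(y'))$ decays like $e^{-\alpha n}$, which for $\e$ small compared to $\alpha$ guarantees the orbit of $y'$ remains inside $B(f_1^{-n}(y),q(f_1^{-n}(y)))$ for all large $n$; only then does one apply the chart estimate inductively to get the lower bound $\geq Ce^{-2\e n}$ and the contradiction. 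In your write-up the application does supply this confinement (since $\lambda^{-n\alpha}$ eventually beats $q(y)e^{-\e n}$ when $\alpha\log\lambda>\e$), but you should state the Pesin lower bound conditionally on the backward orbit staying in the tempered neighborhoods and prove it by induction, rather than assert it over a fixed ball $B(y,r_\e)$ for all $n$. With that adjustment the argument is complete and matches the paper's.
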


\begin{proof}
This follows from standard Pesin-theoretic considerations. Let $\nu$ be an invariant measure for $f_1$. Without loss of generality we 
can assume that $\nu$ is ergodic so it admits two Lyapunov exponents $\chi_1\leq \chi_2$ with $\chi_1+\chi_2 = 0$. 
 Assume by way of  contradiction that $\chi_1 = \chi_2=0$.
   The Oseledets-Pesin reduction theorem (see \cite[Thm. S.2.10]{km}, note that it does not 
 require $\nu$ to be hyperbolic) asserts that for every $\e>0$ 
 there exists a measurable cocycle $C_\e$ 
 with values in $\mathrm{GL}_2(\cc)$ such that for $\nu$-a.e. $x$, the matrix 
 $A_\e (x):= C_\e(f_1(x))\inv \cdot  (Df_1)_x \cdot C_\e(x)$ satisfies $ e^{-\e}\leq \norm{A_\e(x)}\leq e^{\e}$  and
  $ e^{-\e}\leq \norm{(A_\e(x))\inv }\leq e^{\e}$. Then, the 
 Pesin  theorem on existence of regular neighborhoods  (see \cite[Thm. S.3.1]{km})
 implies  that there 
 is a measurable function $q$ such that for $\nu$-a.e. $x$, $f$ behaves likes $(Df_1)_x$ on $B(x, q(x))$ and furthermore 
 $e^{-\e}< q(f_1(x))/q(x) <e^{\e}$. More precisely there is a change of coordinates $\Psi_x$ defined on $B(x, q(x))$ such 
 that $ \Psi_{f_1(x)}\circ f \circ \Psi_x$  is $\e$ $C^1$-close to its differential at $x$, which equals   $A_\e (x)$. 
 
 Now by the Hölder conjugacy to $f_0$, for every $x$ there exists $y$ close to $x$ such that 
 $\dist(f_1^n(y), f_1^n(x))$ decreases like $e^{-\alpha n}$ for some $\alpha >0$. If we pick   $\e$ 
 small as  compared to   $\alpha$,     then for 
 generic $x$ we have that 
 $f_1^n(y) \in  B(f^n(x) , q(f^n(x))$ for every large $n$.  It follows that for large $k$
 $$\dist(f_1^{n+k} (y), f_1^{n+k}(x))\geq C e^{-2\e k}  \dist(f_1^n(y), f_1^n(x)),$$ which is contradictory, and the proof is complete.  
 \end{proof}


\begin{thebibliography}{[AB]}

\bibitem[BGS]{bgs} 
Bedford, Eric;  Guerini, Lorenzo;  Smillie, John.  {\em Hyperbolicity and quasi-hyperbolicity in polynomial diffeomorphisms of $\cc^2$} Preprint 
{\tt arxiv:1601.06268 }.  
 \bibitem[BLS1]{bls}
Bedford, Eric;  Lyubich, Mikhail;  Smillie, John. {\em Polynomial
diffeomorphisms of $\cc^ 2$. IV. The measure of maximal entropy and
laminar currents. }Invent. Math.  112  (1993), 77--125.

\bibitem[BLS2]{bls2}
Bedford, Eric;  Lyubich, Mikhail;  Smillie, John.
{\em Distribution of periodic points of polynomial diffeomorphisms of  $\cd$.} Invent. Math.  114 (1993), 277--288.
\bibitem[BS1]{bs1} Bedford, Eric; Smillie, John. 
  {\em Polynomial
diffeomorphisms of $\cc^ 2$: currents, equilibrium measure and
hyperbolicity.}   Invent. Math. 103 (1991), 69--99. 
 \bibitem[BS8]{bs8} Bedford, Eric;  Smillie, John. {\em 
Polynomial diffeomorphisms
of $\cc^2$. VIII: Quasi-expansion.} Amer. J. Math. 124 (2002),
221-271. 
\bibitem[BD]{hyperbolic} Berger, Pierre; Dujardin, Romain. {\em On stability and hyperbolicity 
for polynomial automorphisms of $\mathbb{C}^2$.} Ann. Sci. Ec. Norm. Sup. (4) 50 (2017), 449--477.
\bibitem[BG]{bochi gourmelon} 	Bochi, Jairo; Gourmelon, Nicolas. {\em 
 Some characterizations of domination.} Math. Z. 263 (2009), no. 1, 221--231.
\bibitem[BDV]{bonatti diaz vuillemin} Bonatti, Christian ;  Díaz, Lorenzo ;  Vuillemin, Fabienne.  
{\em Cubic tangencies and hyperbolic diffeomorphisms.}
 Bol. Soc. Brasil. Mat. (N.S.)  29  (1998),  no. 1, 99--144.
 \bibitem[D1]{closing} Dujardin, Romain. {\em A closing lemma for polynomial automorphisms of $\cd$.} Preprint 
{\tt arxiv:1709.01378}.  To appear in the Yoccoz memorial volume. 

\bibitem[D2]{saddle} Dujardin, Romain. {\em Saddle hyperbolicity implies hyperbolicity for polynomial automorphisms of $\cd$.} 
Preprint {\tt arxiv:1806.10917}. To appear in Math. Res. Lett. 

\bibitem[E]{enrich} Enrich, Heber. {\em  A heteroclinic bifurcation of Anosov diffeomorphisms.}
 Ergodic Theory Dynam. Systems  18  (1998),  no. 3, 567--608.
  \bibitem[F]{fisher} Fisher, Travis. {\em Some Results in Hyperbolic Dynamics.} Ph. D thesis, Penn. State (2006). 
  \bibitem[FHY]{fhy}  Fathi, Albert; Herman, Michael R.; Yoccoz, Jean-Christophe. {\em 
 A proof of Pesin's stable manifold theorem. }Geometric dynamics (Rio de Janeiro, 1981), 177--215, Lecture Notes in Math., 1007, Springer, Berlin, 1983. 
   \bibitem[FLRT]{FLRT} Firsova, Tanya;  Lyubich, Mikhail; Radu, Remus; Tanase, Raluca. 
 {\em Hedgehogs for neutral dissipative germs of holomorphic diffeomorphisms of $(\cd, 0)$}
 Preprint  {\tt arxiv:1611.09342}
 \bibitem[FM]{FM}  Friedland, Shmuel; Milnor, John.  {\em Dynamical properties of plane polynomial automorphisms.} Ergodic Theory Dynam. Systems 9 (1989), no. 1,  67--99.
 \bibitem[Gh]{ghys_survey} Ghys, Étienne.
{\em Laminations par surfaces de Riemann. }
Dynamique et g\'eom\'etrie complexes (Lyon, 1997), ix, xi, 49--95, Panor. Synth\`eses, 8, Soc. Math. France, Paris, 1999. 
\bibitem[Go]{gogolev}  Gogolev, Andrey. {\em  Diffeomorphisms Hölder conjugate to Anosov diffeomorphisms.}
 Ergodic Theory Dynam. Systems  30  (2010),  no. 2, 441--456.
  \bibitem[GP]{guerini peters} Guerini, Lorenzo;   Peters, Han. {\em Julia sets of complex Hénon maps.} 
   Internat. J. Math. 29 (2018), no. 7, 1850047, 22 pp. 
   \bibitem[I]{ishii survey} Ishii, Yutaka. {\em Dynamics of polynomial diffeomorphisms of $\mathbb{C}^2$: combinatorial and topological aspects.} Arnold Math. J. 3 (2017),  119--173.
   \bibitem[K]{katok 79} Katok, A. {\em  Bernoulli diffeomorphisms on surfaces.}
 Ann. of Math. (2)  110  (1979),  no. 3, 529--547.
 \bibitem[KM]{km} Katok, Anatole: Mendoza Luis. {\em Dynamical systems with non-uniformly hyperbolic behavior}. 
Appendix to 
{\em Introduction to the modern theory of dynamical systems. }  
Encyclopedia of Mathematics and its Applications, 54. 
Cambridge University Press, Cambridge 1995.
\bibitem[LM]{lyubich minsky} Lyubich, Mikhail; Minsky, Yair. {\em
Laminations in holomorphic
dynamics.} J. Differential Geom.  47  (1997), 17--94.
  \bibitem[LP]{lyubich peters}  Lyubich, Mikhail; Peters, Han. 
{\em Structure of partially hyperbolic  Hénon maps.}  Preprint {\tt arxiv:1712.05823}.
\bibitem[LRT]{LRT}  Lyubich, Mikhail; Radu, Remus; Tanase, Raluca. {\em Hedgehogs in higher dimension and their
 applications.} Preprint {\tt arxiv:1611.09840}.
 \bibitem[MSS]{mss} Ma{\~n}{\'e}, Ricardo; Sad, Paulo; Sullivan, Dennis.
{\em On the dynamics of rational maps. }
 Ann. Sci. {\'E}cole Norm. Sup.  16 (1983), 193--217.
 \bibitem[M]{milnor}  Milnor, John. {\em  Dynamics in one complex variable. }
Third edition. Annals of Mathematics Studies, 160. Princeton University Press, Princeton, NJ, 2006. viii+304 pp.
\bibitem[RT]{radu tanase} Radu, Remus; Tanase, Raluca.  {\em A
structure theorem for semi-parabolic H\'enon maps.} Adv. Math. 350
(2019), 1000--1058. 
\bibitem[S]{sambarino} Sambarino, Martín.
{\em A (short) survey on dominated splittings.} Mathematical Congress of the Americas, 149--183,
Contemp. Math., 656, Amer. Math. Soc., Providence, RI, 2016. 
\bibitem[Y]{yoccoz} Yoccoz, Jean-Christophe {\em Some questions and remarks about ${\rm SL}(2,\re)$ cocycles.} Modern dynamical systems and applications, 447--458, Cambridge Univ. Press, Cambridge, 2004.
\end{thebibliography}
\end{document}